\numberwithin{equation}{section}
\numberwithin{figure}{section}
\newcommand{\head}[1]{\textnormal{\textbf{#1}}}
\newcommand{\be}{\begin{equation}}
\newcommand{\ee}{\end{equation}}
\newcommand{\NN}{\mathbb{N}}
\newcommand{\RR}{\mathbb{R}}
\newcommand{\Hess}{\texttt{Hess}}
\newcommand{\Ric}{\texttt{Ric}}
\newtheorem{thm}{Theorem}
\newtheorem{prop}[thm]{Proposition}
\newtheorem{lem}[thm]{Lemma}
\newtheorem*{lem*}{Lemma}
\newtheorem*{thm*}{Theorem}
\newtheorem{cor}[thm]{Corollary}
\newtheorem{Assumption}[thm]{Assumption}
\begin{document}

\title[Finite VDM]{Embeddings of Riemannian Manifolds with Finite Eigenvector Fields of Connection Laplacian} 

\author{Chen-Yun Lin}
\address{Chen-Yun Lin\\
Department of Mathematics\\
University of Toronto}
\email{cylin@math.toronto.edu}

\author{Hau-Tieng Wu}
\address{Hau-Tieng Wu\\
Department of Mathematics\\
University of Toronto}
\email{hauwu@math.toronto.edu}


\maketitle

\begin{abstract}
We study the problem asking if one can embed manifolds into finite dimensional Euclidean spaces by taking finite number of eigenvector fields of the connection Laplacian. This problem is essential for the dimension reduction problem in massive data analysis. Singer-Wu proposed the vector diffusion map which embeds manifolds into the Hilbert space $l^2$ using eigenvectors of connection Laplacian. In this paper, we provide a positive answer to the problem. Specifically, we use eigenvector fields to construct local coordinate charts with low distortion, and show that the distortion constants depend only on geometric properties of manifolds with metrics in the little H\"{o}lder space $c^{2,\alpha}$. Next, we use the coordinate charts to embed the entire manifold into a finite dimensional Euclidean space. The proof of the results relies on solving the elliptic system and provide estimates for eigenvector fields and the heat kernel and their gradients. We also provide approximation results for eigenvector field under the $c^{2,\alpha}$ perturbation.
\end{abstract}

\newpage

\section{Introduction}
\label{intro}
In the past decade, there is a growing consensus that high dimensional and massive dataset analysis is a key to future advances. Although frequently the information regarding the structure underlying a dataset is limited, we generally believe that a ``lower dimensional'' and possibly nonlinear structure should exist. 

A widely accepted approach to model the low dimension structure is by considering the manifold; that is, we assume that the collected dataset, while might be of high dimension, is located on a low dimensional manifold \cite{Tenenbaum_deSilva_Langford:2000,Belkin_Niyogi:2003,Coifman_Lafon:2006}. How to analyze the dataset under this assumption is generally called the {\it manifold learning} problem, and one particular goal is to recover the nonlinear low dimensional structure of the manifold. Under this assumption, several algorithms were proposed toward this goal, like isomap \cite{Tenenbaum_deSilva_Langford:2000}, locally linear embedding (LLE) \cite{Roweis_Saul:2000}, eigenmaps (EM) \cite{Belkin_Niyogi:2003}, diffusion maps (DM) \cite{Coifman_Lafon:2006}, Hessian LLE \cite{Donoho_Grimes:2003}, vector diffusion maps \cite{Singer_Wu:2012,Singer_Wu:2014}, nonlinear independent component analysis or empirical intrinsic geometry \cite{Coifman_Singer:2008,Talmon_Coifman:2013}, alternating diffusion \cite{Lederman_Talmon:2015,Talmon_Wu:2015}, etc. In a nutshell, by taking the local information of the given point cloud, like geodesic distance, and by applying knowledge from the spectral geometry and index theory, we could obtain information of the manifold from different aspects, like the parametrization of the dataset or the topological feature of the dataset. 

One particular interesting problem is how to guarantee the data visualization and/or dimensional reduction. {\it Data visualization} problem is asking if we could embed a given manifold into the three dimensional Euclidean space, so that we could visualize the dataset; {\it dimension reduction} problem is asking if we could embed a given manifold into a low dimensional Euclidean space, which dimension is smaller than that of the dataset. Mathematically, this problem is formulated as asking if it is possible to embed the manifold (hence the dataset) into a finite dimensional Euclidean space, even isometrically.  The embedding problem was first positively answered by Whitney \cite{Whitney:1944}, and the isometrically embedding problem was first solved by Nash \cite{Nash:1956}. However, the approach by Nash, the implicit function theory, is not canonical and is not essentially feasible for data analysis. In Berard, Besson and Gallot's breakthrough paper \cite{Berard_Besson_Gallot:1994}, the spectral embedding idea was explored to answer this kind of problem. The main idea is embedding the Riemannian manifold by the eigenfunctions of the associated Laplace-Beltrami operator via studying the associated heat kernel behavior. We could show that the spectral embedding is a canonical embedding in the sense that it depends only on the eigenfunctions and eigenvalues, and the result is close to isometric with error of the order of the diffusion time. In \cite{Wang_Zhu:2015}, this embedding was further modified to an almost isometric embedding, with error up to any give finite order of the diffusion time, by applying the implicit function theory. 

The spectral embedding idea is directly related to many manifold learning algorithms, like EM and DM. While these algorithms work, however, numerically we are able to obtain only finite eigenfunctions and eigenvalues in practice, while theoretically all the eigenfunctions and eigenvalues of the Laplace-Beltrami operator, which are countably infinite, are needed to study the spectral embedding. Thus, the next natural question we could ask is the possibility to embed the manifold by the finite eigenfunctions and eigenvalues. This question was positively answered separately by Bates \cite{Bates:2014} and Portegies \cite{Portegies:2015}; that is, one is able to embed the manifold with finite eigenfunctions and eigenvalues. In Portegies, it is further shown that the embedding could be almost isometric with a prescribed error bound. 

The work in Bates is essentially based on the local parametrization work reported in Johns, Maggioni and Schul \cite{Jones_Maggioni_Schul:2010}, where the inherited oscillatory behavior of the eigenfunction is taken into account to guarantee that locally we could find finite eigenfunctions so that we could embed a local ball via these finite eigenfunctions with low distortion. What needs to be proved is that the embedding of different local balls will not intersect each other. These works fundamentally answer why EM and DM could work well in practice.

The spectral embedding mentioned above depends on the Laplace-Beltrami operator. The vector diffusion map (VDM) \cite{Singer_Wu:2012,Singer_Wu:2014}, on the other hand, depends on the connection Laplacian associated with a possibly nontrivial bundle structure. In brief, the VDM with the diffusion time $t>0$ is defined by the eigenvector fields of the connection Laplacian by 
\begin{align}
V_t: M&\rightarrow \ell^2\label{Definition:VDM}\\
x &\mapsto \left(e^{-(\lambda_i+\lambda_j)t/2}\langle X_{i}, X_{j}\rangle \right)_{i,j=1}^\infty\,,\nonumber
\end{align}
where $x\in M$ and $X_i$ is the $i$-th eigenvector field of the connection Laplacian associated with the eigenvalue $\lambda_i$.
The basic properties of $V_t$ have been shown in \cite{Singer_Wu:2012,Singer_Wu:2014,Wu:2014}. For example, we could see that the VDM is an embedding, and it is close to an isometric embedding with error depending on the diffusion time.
The VDM is originally motivated by studying the cryo electron microscope problem, in particular the class averaging algorithm \cite{Singer_Zhao_Shkolnisky_Hadani:2011,Hadani_Singer:2011b,Zhao_Singer:2014}. In general, the essential goal of VDM is to integrate different kinds of local/partial information and the relationship between these pieces of local information in order to obtain the global information of the dataset; for example, the ptychographic imaging problem \cite{Marchesini_Tu_Wu:2014}, the vector nonlocal mean/median, the orientability problem \cite{Singer_Wu:2011}, etc. Numerically, the VDM depends on the spectral study of the graph connection Laplacian (GCL) \cite{Singer_Wu:2012,Chung_Zhao_Kempton:2013,Chung_Kempton:2013,Singer_Wu:2014,ElKaroui_Wu:2015a,ElKaroui_Wu:2015b}, which is a direct generalization of the graph Laplacian discussed in the spectral graph theory \cite{Fan:1996}.

\subsection{Our contribution}
A fundamental problem regarding the VDM, like that in the spectral embedding, is that if we could embed the manifold with only finite eigenvector fields; that is, can we find a finite number $n\in\NN$ so that the {\em truncated VDM} (tVDM), defined as
\begin{align}
V^{N^2}_t: M&\rightarrow \RR^{N^2}\label{Definition:tVDM}\\
x &\mapsto \left(e^{-(\lambda_i+\lambda_j)t/2}\langle X_{i}, X_{j}\rangle(x) \right)_{i,j=1}^n\,,\nonumber
\end{align}
is an embedding?
In this paper, we provide a positive answer.

\begin{restatable}[Embeddings of Riemannian Manifolds with Finite Eigenvector Fields of Connection Laplacian]{thm}{embthm}
\label{thm:emb}
For a smooth closed manifold $(M,g)$ with smooth metric and for any $t>0$, there is a positive integer $N_0$ so that the tVDM $V^{N^2}_t$ 
is a smooth embedding for all $N \geq N_0$. 
\end{restatable}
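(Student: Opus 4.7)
The plan is to transfer the known embedding property of the full (untruncated) vector diffusion map $V_t$ to its finite-dimensional truncation $V_t^{N^2}$ by showing that the difference is small in a strong enough norm to preserve both the immersion and the injectivity properties. Conceptually this is the connection-Laplacian analogue of the scalar spectral-embedding argument of Bates and Portegies, but rather than going through Jones--Maggioni--Schul-style local charts I would exploit heat kernel asymptotics for the connection Laplacian directly.

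First I would record the exact shape of the target quantities. Expanding the heat kernel $k_t(x,y)$ of the connection Laplacian in the orthonormal eigenbasis $\{X_i\}$, one obtains
\[
\|k_t(x,y)\|_{HS}^2 \;=\; \sum_{i,j=1}^{\infty} e^{-(\lambda_i+\lambda_j)t}\,\langle X_i,X_j\rangle(x)\,\langle X_i,X_j\rangle(y),
\]
so $\langle V_t(x),V_t(y)\rangle_{\ell^2} = \|k_t(x,y)\|_{HS}^2$. Consequently $\|V_t(x)-V_t(y)\|_{\ell^2}^2$ is a smooth, geometric quantity which, by the heat kernel asymptotics, is comparable to $d(x,y)^2$ on the diagonal and is bounded below by a positive constant on the complement of any neighbourhood $\{d(x,y)<\delta\}$. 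This recovers the embedding property of $V_t$ itself, as established in Singer--Wu, together with a uniform-in-$x$ lower bound on $dV_t|_x$ coming from compactness of $M$.

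The crux is then to obtain uniform $C^1(M)$ control of the truncation error
\[
E_N(x) \;:=\; V_t(x) - V_t^{N^2}(x),
\]
so that $\|E_N\|_{C^1(M;\ell^2)} \to 0$ as $N \to \infty$. For this one needs polynomial pointwise bounds of the form $\|X_i\|_{C^0} + \|\nabla X_i\|_{C^0} \leq C \lambda_i^{\alpha}$, which come from elliptic regularity applied to the eigenvector equation $\nabla^*\nabla X_i = \lambda_i X_i$ combined with Sobolev embedding. Coupling these bounds with Weyl's law for the connection Laplacian and with the Gaussian factors $e^{-(\lambda_i+\lambda_j)t/2}$ forces the tail series defining $E_N$ to converge to zero in $C^1$, uniformly on $M$. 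Once this is in hand the proof closes in two steps: the uniform lower bound on $dV_t$ together with $C^1$ approximation makes $V_t^{N^2}$ an immersion for all sufficiently large $N$; and the $C^0$ lower bound for $\|V_t(x)-V_t(y)\|$ on $\{d(x,y)\geq\delta\}$ together with $C^0$ approximation gives injectivity there, while the diagonal strip $\{d(x,y)<\delta\}$ is handled by the now-established immersion property and the inverse function theorem.

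The main obstacle I anticipate is the elliptic step. For scalar eigenfunctions of the Laplace--Beltrami operator, pointwise and gradient bounds polynomial in the eigenvalue are classical, but for the connection Laplacian acting on sections of $TM$ one faces a genuine elliptic \emph{system}, with coupling through Ricci curvature terms arising from the Weitzenb\"ock formula. Extracting gradient and heat-kernel estimates with constants that depend only on the geometry of $(M,g)$ --- and in particular on the $c^{2,\alpha}$ regularity of the metric advertised in the abstract --- is precisely the technical heart of the paper, and this is where I would expect to spend the bulk of the effort.
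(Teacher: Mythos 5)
Your proposal is correct in outline, and it takes a genuinely different route from the paper's. The paper establishes the immersion property of $V_t^{N^2}$ by first proving the local parametrization theorem (Theorem~\ref{thm:par}) in the style of Jones--Maggioni--Schul --- carefully selecting $n$ pairs of eigenvector fields in a spectral band $[\tau^{-1}R^{-2},\tau R^{-2}]$ to build a low-distortion chart on each small ball, then invoking Lemma~\ref{lem:local_embedding} to conclude that the first $m^2$ coordinates of the tVDM already give a local embedding for a uniform $m$. Only $C^0$ control of the spectral tail (Lemma~\ref{lem:remainder bound}) is then used, to separate points at distance $\geq\epsilon$ via the heat kernel comparison. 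You instead bypass the local chart construction entirely and propagate the known embedding property of the full $V_t$ to $V_t^{N^2}$ by proving $\|E_N\|_{C^1}\to 0$: the $C^1$ tail bound delivers immersion directly (since $dV_t^{N^2}=\pi_N\,dV_t$ has the same modulus of continuity as $dV_t$ and, for large $N$, singular values uniformly bounded below), and $C^0$ tail control plus compactness gives injectivity both off the diagonal and, via the inverse function theorem with a uniform-in-$N$ radius, near it. Your approach is conceptually cleaner for a fixed manifold and correctly isolates the technical heart --- the polynomial $C^0/C^1$ bounds on eigenvector fields (Proposition~\ref{conj:est:sup}) combined with Weyl's law, which both arguments ultimately rest on. What it does not readily give is the quantitative, geometry-only dependence of the constants: the paper's detour through explicit local charts with distortion bounded by $\tau(n,Q,\kappa,i_0,\alpha,\delta_1)$ is what drives the attempt at a universal $N_0$ over the whole class $\mathcal{M}_{n,\kappa,i_0,V}$ (Conjecture~\ref{cor:emb}); your argument as stated hides the $N_0$ in the embedding constants of $V_t$ itself, which are not tracked in terms of the geometric bounds. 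One small point worth being explicit about: the asymptotic expansions you invoke are small-$t$ statements, so as in the paper you should either fix a small $t_0$ and then observe that $V_t^{N^2}$ and $V_{t_0}^{N^2}$ differ by an invertible diagonal rescaling of the finitely many coordinates, or else argue purely from compactness and smoothness of $\|k_{TM}(t,\cdot,\cdot)\|_{HS}^2$ for the given fixed $t$.
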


With the above theorems we could answer the raised question positively and say that with a proper chosen finite number of eigenvector fields, the tVDM is an embedding. 

Denote a class of closed smooth manifolds of dimension $d$ by
\be
\mathcal{M}_{n,\kappa, i_0,V} = \{ (M^n,g) :  |\Ric(g)| \leq \kappa ,\, \mathrm{inj(M)} \geq i_0,\,\mathrm{Vol}(M)\leq V\},
\ee
where $\mathrm{inj}(M)$ denotes the injectivity radius of $M$.

\begin{restatable}[Embeddings of Manifolds with $c^{2,\alpha}$ metrics]{thm}{rough}
Take $0<\alpha\leq1$. For $(M,g)\in\mathcal{M}_{n,\kappa,i_0,V}$ with $g \in c^{2,\alpha}$ so that $ \left| \Ric \right| \leq \kappa$, 
and for any $t>0$, there us a positive integer $N_0$ so that the tVDM $V^{N^2}_t$  is an embedding for all $N \geq N_0$.
\end{restatable}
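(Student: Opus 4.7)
The plan is to reduce the $c^{2,\alpha}$ case to the smooth case of Theorem \ref{thm:emb} through a smoothing/perturbation argument, relying on the approximation results for eigenvector fields under $c^{2,\alpha}$ perturbation that the abstract advertises.

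First, I would approximate $g$ by a family of smooth metrics $g_\epsilon$ obtained by a standard mollification in harmonic coordinates (whose existence with uniform control is guaranteed by the bounds $|\Ric|\le\kappa$, $\mathrm{inj}(M)\ge i_0$, $\mathrm{Vol}(M)\le V$). Arranging this so that $g_\epsilon \to g$ in the $c^{2,\alpha}$ topology, I would verify that $g_\epsilon \in \mathcal{M}_{n,\kappa',i_0',V'}$ for $\kappa'$, $i_0'$, $V'$ only slightly worse than $\kappa$, $i_0$, $V$, and independent of $\epsilon$. The point is that all of the constants that will appear below must be controlled purely in terms of $(\kappa,i_0,V,\alpha,t)$ and not on any higher-order derivative of $g$.

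Next, I would invoke Theorem \ref{thm:emb} for each smooth $g_\epsilon$. A crucial but routine verification is that the integer $N_0$ produced there depends only on the geometric quantities in the class $\mathcal{M}_{n,\kappa',i_0',V'}$ and on $t>0$, because the proof of that theorem proceeds via local low-distortion coordinate charts built from eigenvector fields together with heat kernel estimates, and both the distortion constants and the covering arguments are expressed in terms of $(\kappa,i_0,V,t)$. Consequently one obtains a single $N_0$, uniform in $\epsilon$, such that $V^{N^2}_{t,\epsilon}$ is a smooth embedding of $(M,g_\epsilon)$ for every $N\ge N_0$ and every small $\epsilon$.

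I would then use the $c^{2,\alpha}$ perturbation theory for the connection Laplacian (the spectral stability and the convergence of eigenvector fields in $C^1$, via elliptic regularity applied to the eigenvalue problem written in harmonic coordinates) to pass to the limit. Since the connection Laplacian of $g_\epsilon$ converges to that of $g$ in an operator sense compatible with the $c^{2,\alpha}$ topology, the eigenvalues $\lambda_i^\epsilon$ converge to $\lambda_i$ and the eigenvector fields $X_i^\epsilon$ converge to $X_i$ in $C^1$ (up to a choice of orthonormal basis inside multiplicity eigenspaces). It follows that $V^{N^2}_{t,\epsilon}$ converges to $V^{N^2}_t$ uniformly with its first derivatives. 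Combined with the uniform lower bound on local distortion constants extracted from the smooth proof, this forces $V^{N^2}_t$ itself to be an embedding for all $N\ge N_0$.

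The main obstacle I anticipate is the uniformity of the quantitative constants across the smoothing family: one must ensure that the local distortion bounds, the lower bounds on gradients of eigenvector fields (which prevent the chart from degenerating), and the separation estimates between images of distant charts all survive the passage $g_\epsilon \to g$ with constants controlled only by $(n,\kappa,i_0,V,\alpha,t)$. This is exactly where the $c^{2,\alpha}$ assumption is used, through Schauder-type elliptic estimates in harmonic coordinates applied to the eigenvector field equation and to the heat kernel; once these uniform estimates are in hand, the limit argument above closes the proof, with the caveat that the resulting embedding is only $C^1$ rather than smooth, in agreement with the regularity of $g$.
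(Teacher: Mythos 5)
Your proposal is correct and follows essentially the same strategy as the paper: approximate $g$ by smooth metrics $g_\epsilon$ in the little H\"older topology (where the density of $C^\infty$ in $c^{2,\alpha}$ is precisely what makes this possible), invoke the smooth-case Theorem \ref{thm:emb} with constants controlled uniformly over the family, and pass to the limit via the $C^1$ perturbation bound for eigenvector fields, which is exactly the content of Theorem \ref{thm:ev_convergence} (itself established via Lemma \ref{lem:error_approx}, Proposition \ref{prop:orthonoram_approx}, and the Calder\'on--Zygmund estimate from \cite{Wang:2004}). Your closing caveat that the resulting embedding is only $C^1$ rather than smooth is also consistent with the theorem statement, which asserts only ``an embedding.''
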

Here $c^{k,\alpha}$ denotes the little H\"{o}lder space which is the closure of $C^{\infty}$ functions in the H\"{o}lder space $C^{k,\alpha}$. 

Throughout the paper, we try to quantify all the bounds by $n, \kappa, i_0$, and $V$ in hope that we can embedding results for the whole class of manifolds $\mathcal{M}_{n,\kappa, i_0,V}$. However, we still lack of universal estimates for Lemma \ref{conj:est:kTM} for manifolds in $\mathcal{M}_{n,\kappa, i_0,V}$. If one, can obtain such universal bounds, then the following conjecture is proved as well. 

\begin{restatable}[Embeddings of Riemannian Manifolds with Finite Eigenvector Fields of Connection Laplacian]{conj}{embconj}
\label{cor:emb}
There is a positive integer $N_0$ so that for all $(M,g)\in \mathcal{M}_{n,\kappa, i_0,V}$, $N\geq N_0$ and for any $t>0$, the tVDM $V^{N^2}_t$
is a smooth embedding. 
\end{restatable}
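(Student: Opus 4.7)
The plan is to upgrade Theorem \ref{thm:emb} (and its $c^{2,\alpha}$ counterpart) from one manifold at a time to a uniform statement over the precompact class $\mathcal{M}_{n,\kappa,i_0,V}$. The authors themselves isolate the bottleneck: if Lemma \ref{conj:est:kTM} can be made \emph{universal} with constants depending only on $(n,\kappa,i_0,V)$, then tracking dependencies through the proof of Theorem \ref{thm:emb} already yields a uniform $N_0=N_0(n,\kappa,i_0,V,t)$. I would therefore split the work into (i) upgrading that lemma, and (ii) retracing the embedding proof while recording dependencies.

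For step (i), I would invoke Cheeger--Anderson-type compactness: under $|\Ric|\le\kappa$, $\mathrm{inj}(M)\ge i_0$, and $\mathrm{Vol}(M)\le V$, the class is precompact in the $C^{1,\alpha}$ Cheeger--Gromov topology, and at every point of every manifold in the class one has a harmonic coordinate chart of radius $r_0=r_0(n,\kappa,i_0)>0$ on which $g_{ij}$ admits uniform $C^{1,\alpha}$ bounds. Since the connection Laplacian eigenvector equation $\nabla^{\ast}\nabla X_k=\lambda_k X_k$ is an elliptic system whose coefficients in these coordinates involve $g$ and its first two derivatives through the Christoffel symbols and the curvature operator from the Weitzenb\"ock term, one needs enough control on $\partial^2 g$ to close a Schauder loop. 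In harmonic coordinates the Ricci tensor acts as a quasi-linear elliptic operator in $g_{ij}$ up to lower-order terms in $\partial g$, so the $C^0$ Ricci bound upgrades by elliptic regularity to $C^{2,\alpha}$ control on $g$, hence to a $C^\alpha$ bound on the full Riemann tensor. Feeding this back into the eigenvector-field system gives uniform $C^{2,\alpha}$ bounds on each $X_k$ below any prescribed eigenvalue cutoff, together with the heat-kernel and gradient-of-heat-kernel bounds required by Lemma \ref{conj:est:kTM}.

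For step (ii), I would rerun the proof of Theorem \ref{thm:emb} tracking every constant. That argument combines a low-distortion local chart constructed from a carefully chosen block of eigenvector fields $\{X_k:\lambda_k\le\Lambda\}$, a covering of $M$ by such charts, and a non-collision estimate between distinct charts coming from on-diagonal versus off-diagonal heat-kernel asymptotics. Bishop--Gromov under $\Ric\ge -\kappa$ caps the number of charts in a covering at scale $r_0$; a uniform Weyl estimate for the connection Laplacian, itself derivable from the heat-kernel bounds, caps the number of eigenvector fields needed below $\Lambda$; and step (i) supplies both the distortion constants and the non-collision separation. Assembling these inputs, the integer $N_0$ becomes a function of $(n,\kappa,i_0,V,t)$ only, which is exactly what the conjecture asserts.

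The main obstacle is the Schauder bootstrap in step (i). A pointwise bound on $|\Ric|$ does not immediately bound the full Riemann tensor, yet the connection Laplacian on vector fields couples to that tensor through Weitzenb\"ock. The harmonic-coordinate detour resolves this in principle, but making it uniform across the class requires that the harmonic radius $r_0(n,\kappa,i_0)$ stay bounded below and that the elliptic regularity for the Einstein-type equation on $g_{ij}$ be carried out in the little-H\"older space $c^{2,\alpha}$ rather than the full $C^{2,\alpha}$, so that smoothness of approximating metrics is preserved under the limit. I expect the technical effort to concentrate here; once these uniform estimates are in hand, the remainder of the argument is bookkeeping on the proof of Theorem \ref{thm:emb}.
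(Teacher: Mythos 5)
The statement you are trying to prove is not proved in the paper: the authors explicitly label it a \emph{conjecture} and state that the missing ingredient is a version of Lemma~\ref{conj:est:kTM} whose constants depend only on $(n,\kappa,i_0,V)$. Your proposal correctly identifies this bottleneck and correctly observes that, once such uniform heat-kernel estimates are available, the rest is dependency bookkeeping through Lemmas~\ref{lem:local_embedding} and~\ref{lem:remainder bound} and a Bishop--Gromov covering argument. Step~(ii) is, in spirit, what the paper would do and what its language (``If one can obtain such universal bounds, then the following conjecture is proved as well'') already anticipates.

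The genuine gap is in step~(i), in the elliptic bootstrap. You assert that in harmonic coordinates ``the $C^0$ Ricci bound upgrades by elliptic regularity to $C^{2,\alpha}$ control on $g$, hence to a $C^\alpha$ bound on the full Riemann tensor.'' This does not hold. In harmonic coordinates, $g_{ij}$ satisfies an elliptic system with the Ricci tensor on the right-hand side, but to invoke Schauder and obtain $C^{2,\alpha}$ regularity of $g$ you would need $\mathrm{Ric}$ to be $C^\alpha$-bounded, not merely $L^\infty$-bounded. The hypothesis $|\mathrm{Ric}|\le\kappa$ is only an $L^\infty$ bound; via $L^p$ regularity it yields $W^{2,p}$ control on $g$ for all finite $p$, equivalently $C^{1,\alpha}$ control, which is exactly Anderson's conclusion as restated in Section~\ref{background} of the paper. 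The jump from there to $C^{2,\alpha}$, and hence to a uniform bound on the full Riemann tensor, has no support from the stated hypotheses; indeed there are sequences in $\mathcal{M}_{n,\kappa,i_0,V}$ whose sectional curvatures blow up. Since the Weitzenb\"ock term in the connection Laplacian involves the full curvature operator, and the heat-kernel parametrix coefficients in Lemma~\ref{conj:est:kTM} depend on several covariant derivatives of the full Riemann tensor, your bootstrap does not close, and the universal version of Lemma~\ref{conj:est:kTM} that you need remains out of reach. This is precisely the obstruction the authors point to, and your argument does not overcome it.
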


\subsection{Organization}
The paper is organized in the following. In Section \ref{background}, we introduce our notation convention and provide some back ground material. 

In Section \ref{sec:par}, under the assumption that the metric is smooth, we provide necessary Lemma \ref{conj:est:kTM}-\ref{lem:main} for proving the universal local parametrization for any manifold in $\mathcal{M}_{d,\kappa, i_0,V}$.

In Section \ref{sec:VDM}, under the assumption that the metric is smooth, we show the global embedding result, Theorem \ref{thm:emb}. To show the proof, we provide an immersion result in Lemma \ref{lem:local_embedding} based on Theorem \ref{thm:par}. Then, in Lemma \ref{lem:remainder bound} we control the remainder term of the series expansion of the Hilbert-Schmidt norm of the heat kernel associated with the connection Laplacian. In Section \ref{Section:RoughMetric}, we approximate a metric with low regularity by a smooth metric, and hence prove Theorem \ref{thm:emb} under the assumption that the metric is of low regularity. In Section \ref{examples}, we provide two examples to illustrate how VDM is carried out numerically.


{\bf{Acknowledgements:}}
Hau-tieng Wu's research is partially supported by Sloan Research Fellow FR-2015-65363. Chen-Yun Lin would like to thank Thomas Nyberg for his helpful discussions. 

\section{Background Material and Notation}
\label{background}

\begin{table}
\centering
\begin{tabular}{@{}l*2{>{}l}%
  l<{}l@{}}
\toprule[1.5pt]
  & \multicolumn{1}{c}{\head{Symbol}}
  & \multicolumn{1}{c}{\head{Meaning}}\\
  \cmidrule(lr){2-2}\cmidrule(lr){3-4}
  \multirow{8}{*}{Setup} 
	&  $n$ & dimension of the Riemannian manifold& \\
 	& $M$ & smooth closed Riemannian manifold & \\
	& $g$  & Riemannian metric & \\
	& $TM$ & tangent bundle of $M$ & \\
	& $\langle \cdot, \cdot \rangle$ & inner product with respect $g$ &   \\
	& $d_g( \cdot, \cdot)$ & geodesic distance & \\
	& $P_x^y$ & parallel transport from $x$ to $y$ & \\ 
	& $(\phi, U)$ & local (harmonic) coordinate chart & \\ 
  \cmidrule(lr){2-2}\cmidrule(lr){3-4}
    \multirow{4}{*}{Geometric Conditions} 
    & $V$ & volume upper bound &\\
    & $D$ & diameter upper bound &\\
    & $\kappa$ & Ricci curvature bound, $|\Ric| \leq \kappa$ &\\
    & $i_0$ & injectivity radius lower bound &\\
      \cmidrule(lr){2-2}\cmidrule(lr){3-4}
  \multirow{2}{1.1cm}{Indices} 
  	& $a,b,c,\cdots$ & indices for coordinate charts & \bfseries \\
        & $i,j,k,\cdots$ & indicies for the spectrum  & \mdseries & \\
  \cmidrule(lr){2-2}\cmidrule(lr){3-4}
 \multirow{5}{*}{Embedding} 
  	&  $N$ & dimension of the ambient Euclidean space & \\
	& $\Phi_z$ & local parametrization of $M$ & \\   
	& $\tilde{\Phi}_z$ & weighted local parametrization of $M$ & \\   
	& $\mu_{ij}$ & associated weight with respect to $X_i$ and $X_j$ & \\
	& $\Phi^{N^2}$ & embedding of $M$ into $\mathbb{R}^{N^2}$ & \\
   \cmidrule(lr){2-2}\cmidrule(lr){3-4}
  \multirow{5}{*}{Derivatives} 
  	& $\partial_i$ & partial derivative w.r.t coordinate $x_i$ & \itshape & \\
  	& $\nabla_g$ & gradient/Levi-Civita connection of $g$ & \itshape & \\
        & $\Gamma_{ab}^c$ & Christoffel symbols & \slshape & \\
        & $\Delta_g$ & Laplace-Beltrami operator & \upshape & \\
        & $\nabla_g^2$ & connection Laplacian & \upshape & \\
  \cmidrule(lr){2-2}\cmidrule(lr){3-4}
 \multirow{4}{*}{Spectra} 
 	& $\lambda_i$ & spectrum of $\nabla^2$, counting multiplicity & \itshape & \\
        & $X_i$ & $L^2$-normalized eigenvector fields, $\nabla^2 X_i = - \lambda_i X_i$ & \slshape & \\
        & $\nu_i$ & spectrum of $\Delta$, counting  multiplicity & \scshape & \\
        & $\xi_i$ & $L^2$-normalized eigenfunctions, $\Delta \xi_i = - \nu_i \xi_i$  & \upshape & \\
  \cmidrule(lr){2-2}\cmidrule(lr){3-4}
 \multirow{2}{*}{Spaces} 
        & $L^{p,\infty}(M,g)$ & the weak $L^p$ spaces & \slshape & \\
        & $ c^{2,\alpha}$& the little H\"{o}lder space & \upshape & \\
  \cmidrule(lr){2-2}\cmidrule(lr){3-4}
 \multirow{4}{*}{Heat Kernels} 
        & $k_{TM}(t,z,w)$ & heat kernel of the heat semigroup $e^{-t\nabla^2}$ & \slshape &\\
        & $k_M(t,z,w)$ & heat kernel of the heat semigroup $e^{-t\Delta}$ & \itshape & \\
        & $\nabla_v k_{TM}(t,\cdot, \cdot)$ & derivative w.r.t. the second spatial variable  & \scshape & \\
        & $\| k_{TM}(t,\cdot,\cdot) \|_{HS}^2$& Hilber-Schmidt norm of $k_{TM}(t,\cdot,\cdot)$ & \upshape & \\
  \cmidrule(lr){2-2}\cmidrule(lr){3-4}
\multirow{2}{*}{Other} 
        & $f_1 \sim_{c}^{C} f_2$ & $c f_2 \leq f_1 \leq C f_2$ & \slshape &\\
        & $c \ll C$ &  $c/C$ is sufficiently small for positive values $c, C$& \itshape & \\
\bottomrule[1.5pt]
\end{tabular}
\caption{Summary of symbols}
\end{table}

Let $(M,g)$ be a smooth compact manifold of dimension $n$ without boundary with metric $g$. We assume that $g$ is smooth except the technical part (Lemmas \ref{lem:Caccioppoli}, \ref{lem:local_L^2}, Proposition  \ref{conj:est:sup}, and Corollary \ref{cor:est:sup}) in Section \ref{sec:par} and Section \ref{Section:RoughMetric} where we assume $g$ is $c^{2,\alpha}$.
Denote $d_g(x,y)$ to be the geodesic distance between $x$ and $y$. Let $D$ denote the diameter of $(M,g)$.

Anderson  \cite{Anderson:1990} showed that the existence of harmonic coordinates on balls of uniform size is guaranteed by imposing suitable geometric conditions.
The bounds $|Ric| \leq \kappa$ and $inj(M) \geq i_0$ (or the volume bound) alone imply a lower bound on the size of balls $B$ on which one has harmonic coordinates with $C^{1,\alpha}$ bounds on the metric. Here, we restate Main Lemma 2.2 in \cite{Anderson:1990} for our case. 
\begin{lem*}
Let $(M,g)$ be a closed Riemannian $n$-manifold such that 
\be
|Ric| \leq \kappa \quad inj(M) \geq i_0
\ee
Then given any $Q >1$, $\alpha \in (0,1)$, there exists an constant $\epsilon_0 = \epsilon_0(Q, \kappa, n, \alpha)$ with the following property: given any point $z\in M$, there is a harmonic coordinate system  $\phi: U\subset \mathbb{R}^n \rightarrow B_r(z), r \geq \epsilon_0 i_0$ such that $ \phi(0) = z$, $g_{ab}(0) = \delta_{ab}$, and
\begin{eqnarray}
Q^{-1} \delta_{ab} \leq g_{ab} \leq Q \delta_{ab} \mbox{ on $U$}\\
r^{1+\alpha} \| g \|_{C^{1,\alpha}(U)} \leq Q.
\end{eqnarray}
\end{lem*}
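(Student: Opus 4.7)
The plan is to follow the standard strategy pioneered by DeTurck--Kazdan and refined by Anderson: construct candidate harmonic coordinates as small perturbations of normal coordinates, then exploit the fact that in harmonic coordinates the metric satisfies a quasilinear elliptic system with the Ricci tensor as forcing term, so that rough control on $g$ is upgraded to the claimed $C^{1,\alpha}$ estimate via Calder\'on--Zygmund and Schauder theory.

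First I would use $\mathrm{inj}(M)\geq i_0$ to set up normal coordinates on $B_{i_0}(z)$, pulling the metric back to a ball in $\RR^n$. The bound $|\Ric|\leq\kappa$ together with Rauch/Bishop--Gromov comparison yields $C^0$ control of the metric components on a smaller concentric ball $B_{r_1}(0)$, with $r_1$ depending only on $n,\kappa,i_0$. For a radius $r\leq r_1$ to be chosen, I would build harmonic coordinates by solving, for each $a=1,\dots,n$, the Dirichlet problem $\Delta_g u^a=0$ on $B_r(z)$ with boundary values equal to the normal coordinate functions $x^a$. Standard elliptic theory, combined with the $C^0$ control on $g$, guarantees that $u^a$ is $C^{1,\alpha}$-close to $x^a$ once $r$ is chosen small relative to $1/\sqrt{\kappa}$; hence $\Psi=(u^1,\dots,u^n)\colon B_r(z)\to\RR^n$ is a diffeomorphism onto its image $U$, and its inverse $\phi=\Psi^{-1}$ is the desired harmonic chart. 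A linear rescaling then achieves $\phi(0)=z$, $g_{ab}(0)=\delta_{ab}$, and the pinching $Q^{-1}\delta_{ab}\leq g_{ab}\leq Q\delta_{ab}$.

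The central ingredient is the $C^{1,\alpha}$ estimate on $g$. In harmonic coordinates the identity $\Delta_g x^a=0$ collapses the second-order part of the equations for the metric to a scalar elliptic operator acting on each $g_{ad}$, yielding
\begin{equation}
\tfrac{1}{2}\,g^{bc}\partial_b\partial_c g_{ad}+Q_{ad}(g,\partial g)=-\Ric_{ad},
\end{equation}
where $Q_{ad}$ is quadratic in $\partial g$. With $C^0$ control on $g$ this is a uniformly elliptic quasilinear system with right-hand side in $L^\infty$; Calder\'on--Zygmund $L^p$ theory gives $W^{2,p}$ bounds for every finite $p$, and Sobolev embedding upgrades these to $C^{1,\alpha}$ bounds for every $\alpha\in(0,1)$. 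A scaling argument converts these bounds into the claimed form $r^{1+\alpha}\|g\|_{C^{1,\alpha}(U)}\leq Q$ on a ball of radius $r\geq \epsilon_0 i_0$, with $\epsilon_0=\epsilon_0(Q,n,\kappa,\alpha)$.

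The main obstacle is making the harmonic-chart construction quantitative so that $r$ depends only on $n,\kappa,i_0,\alpha$, rather than on an implicit $C^0$ bound for $g$ that could itself degenerate. The cleanest resolution is a contradiction/blow-up argument: if the harmonic radius at $z_i\in M_i$ failed to be bounded below by $\epsilon\, i_0$ for every $\epsilon>0$, then rescaling $g_i$ by the inverse of its harmonic radius produces a sequence with $|\Ric|\to 0$ and $\mathrm{inj}\to\infty$, and a subsequence converges in the pointed Cheeger--Gromov $C^{1,\alpha}$ topology to flat $\RR^n$. But flat $\RR^n$ has infinite harmonic radius while the rescaled harmonic radii at the base points all equal $1$, contradicting the upper semicontinuity of the harmonic radius under $C^{1,\alpha}$ convergence.
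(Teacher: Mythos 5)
The paper does not prove this lemma at all; it is quoted verbatim as ``Main Lemma 2.2'' of Anderson's paper \cite{Anderson:1990}, so there is no in-paper proof for your sketch to be compared with. What you have written is a rough outline of Anderson's original argument, and its overall architecture (local existence of harmonic charts via DeTurck--Kazdan type arguments, the quasilinear elliptic system for $g$ in harmonic coordinates, and a rescaling/contradiction argument to get a uniform lower bound on the harmonic radius) is the right one. But there are two genuine mathematical slips in the way you set it up.

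First, the step ``$|\Ric|\leq\kappa$ together with Rauch/Bishop--Gromov comparison yields $C^0$ control of the metric components in normal coordinates'' does not go through. Rauch comparison requires two-sided \emph{sectional} curvature bounds, and Bishop--Gromov gives volume comparison, not pointwise control of $g_{ab}$. Under a Ricci bound alone there is no a priori $C^0$ pinching of $g$ in geodesic normal coordinates on a ball of definite size; this is exactly why Anderson's result is nontrivial and why the blow-up argument, not direct elliptic bootstrapping from normal coordinates, carries the quantitative content. Your first two paragraphs should therefore be weakened to ``harmonic coordinates exist near each point'' (a soft, local existence statement), with all quantitative uniformity deferred to the contradiction argument.

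Second, the contradiction at the end is stated with the wrong sense of semicontinuity. You want: rescaled metrics $\tilde g_i$ with $r_h(\tilde g_i,z_i)=1$ converging (pointed, $C^{1,\alpha'}$) to flat $\RR^n$ with $r_h=\infty$, hence a contradiction. \emph{Upper} semicontinuity would give $\limsup_i r_h(\tilde g_i,z_i)\leq r_h(\RR^n)=\infty$, which is vacuous; the statement you need is \emph{lower} semicontinuity (in fact continuity) of the harmonic radius under $C^{1,\alpha'}$ pointed convergence, so that $1=\liminf_i r_h(\tilde g_i,z_i)\geq r_h(\RR^n)=\infty$. This in turn hinges on the flat metric satisfying the harmonic-radius bounds with strict inequality ($g=\delta$ gives $\|g\|_{C^{1,\alpha}}=0<Q$), which leaves room for nearby metrics to satisfy the same $(Q,\alpha)$ bounds on large balls. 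Finally, for the precompactness that lets you extract the flat limit, you cannot just take $z_i$ to be the base points: one needs a point-picking step (maximizing $d(\cdot,\partial B)/r_h(\cdot)$, say) so that after rescaling the harmonic radius is bounded below by $1/2$ on balls of growing radius, giving the uniform $C^{1,\alpha}$ atlas required for Cheeger--Gromov precompactness. Without that selection your ``a subsequence converges'' claim is not justified.
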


For any $z \in M$, let $(\phi, U)$,  $\phi: U \subset \mathbb{R}^n \rightarrow M$, be a harmonic coordinate chart so that $z=\phi(0)\in \phi(U)$, $g_{ab}(0)=\delta_{ab}$, and $Q^{-1} < g < Q$ on $U$. Set
\be
R_z = \sup_r \{ r>0 : B_r(z) \subset \phi(U) \}. 
\ee

There are several advantages that we have using harmonic coordinates. First, we can obtain universal harmonic radius lower bound. Second, a metric has optimal regularity in harmonic coordinate charts. 
\begin{thm*}\cite[Theorem 2.1]{Deturck_Kazdan:1981}
If a metric $g\in C^{k,\alpha}, 1\leq k \leq \infty$ in some coordinates chart, then it is also of class $C^{k,\alpha}$ in harmonic coordinates, while it is of at least class $C^{k-2,\alpha}$ in geodesic normal coordinates.  
\end{thm*}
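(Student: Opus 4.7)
The plan is to establish the harmonic-coordinate claim first, which is the substantive part, and then derive the normal-coordinate bound as a consequence of ODE regularity. The crucial computation in a general coordinate chart expresses the Ricci tensor as
\[R_{ab} = -\tfrac{1}{2}g^{cd}\partial_c\partial_d g_{ab} + \tfrac{1}{2}\bigl(g_{ac}\partial_b\Gamma^c + g_{bc}\partial_a\Gamma^c\bigr) + Q_{ab}(g,g^{-1},\partial g),\]
where $\Gamma^c:=g^{ab}\Gamma^c_{ab}$ and $Q_{ab}$ is a universal polynomial expression, quadratic in $\partial g$. Harmonic coordinates are characterized by $\Delta_g x^c=-\Gamma^c\equiv 0$, so the middle group of terms vanishes and $g_{ab}$ satisfies the quasi-linear uniformly elliptic second-order system
\[-\tfrac{1}{2}g^{cd}\partial_c\partial_d g_{ab} = R_{ab} - Q_{ab}(g,g^{-1},\partial g),\]
whose ellipticity constants are controlled by the $C^0$ H\"{o}lder norm of $g$ and a positive lower bound on $\det g$.

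Given any initial chart in which $g\in C^{k,\alpha}$, one constructs harmonic coordinates by solving $\Delta_g x^c=0$ with, say, Dirichlet data matching the original coordinates on a small geodesic sphere; standard elliptic theory then yields the new coordinates of class $C^{k+1,\alpha}$ as functions of the old, so after pull-back $g$ begins at $C^{k,\alpha}$ regularity in the harmonic chart. Applying Schauder interior estimates to the displayed system and bootstrapping --- using that the right-hand side involves one fewer derivative of $g$ than the left --- raises regularity one step at a time until reaching the full $C^{k,\alpha}$ (or $C^\infty$ in the limit). The ellipticity constants remain bounded along the iteration since they depend only on lower-order H\"{o}lder norms of $g$, so no derivative is lost. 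For the geodesic normal coordinate statement, one uses that $\exp_p$ is the time-one map of the geodesic flow $\ddot\gamma^i+\Gamma^i_{jk}(\gamma)\dot\gamma^j\dot\gamma^k=0$. If $g\in C^{k,\alpha}$ then $\Gamma\in C^{k-1,\alpha}$, and ODE smooth-dependence with H\"{o}lder-continuous coefficients gives $\exp_p\in C^{k-1,\alpha}$ in the initial velocity. The pull-back $\tilde g_{ab}(x)=g_{ij}(\exp_p(x))\partial_a\exp_p^i(x)\partial_b\exp_p^j(x)$ then contains two factors of $\partial\exp_p\in C^{k-2,\alpha}$, giving $\tilde g\in C^{k-2,\alpha}$, which is sharp in general.

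The main obstacle is the bookkeeping in the harmonic-coordinate bootstrap: one must verify that the uniform ellipticity constants remain controlled along the iteration and that the nonlinear right-hand side $Q_{ab}$ lies in the expected H\"{o}lder class at each stage. These properties follow from classical Schauder estimates together with the algebra of H\"{o}lder spaces, but they require care so that the endpoint regularity is exactly $C^{k,\alpha}$ rather than an artefact like $C^{k-1,\alpha}$.
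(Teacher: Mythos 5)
The paper does not prove this statement; it is cited to DeTurck--Kazdan~\cite{Deturck_Kazdan:1981} without proof, so there is no in-paper argument to compare against. Evaluated on its own, your reconstruction is essentially the standard DeTurck--Kazdan argument and the core steps are right: solve $\Delta_g u^c=0$ in the original chart, apply Schauder (the first-order coefficients $g^{ab}\Gamma^c_{ab}$ are $C^{k-1,\alpha}$, so $u^c\in C^{k+1,\alpha}$), and observe that a $C^{k+1,\alpha}$ coordinate change pulled back against a $C^{k,\alpha}$ metric yields a $C^{k,\alpha}$ metric. That last sentence \emph{is already} the conclusion of the harmonic-coordinate half of the theorem; DeTurck--Kazdan deduce their Theorem 2.1 directly from this existence-plus-regularity lemma for harmonic coordinates.

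The weak point is the ``bootstrap'' paragraph, which is logically circular as written: you assert that after pull-back $g$ ``begins at $C^{k,\alpha}$'' and then that bootstrapping the elliptic Ricci system ``raises regularity one step at a time until reaching the full $C^{k,\alpha}$.'' Since the starting and ending regularities are the same, this adds nothing. A genuine bootstrap on the equation $-\tfrac12 g^{cd}\partial_c\partial_d g_{ab} = R_{ab} - Q_{ab}(g,\partial g)$ is the right tool only for the \emph{converse-type} statement — e.g.\ if one assumes $\mathrm{Ric}\in C^{m,\alpha}$ with $m>k-2$ and wants to upgrade $g$ to $C^{m+2,\alpha}$ in harmonic coordinates — which is not the hypothesis here. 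For the theorem as stated, the coordinate-change argument alone is the whole proof, and presenting the bootstrap as the ``main obstacle'' misidentifies where the content lies. The geodesic-normal-coordinate half is fine: $\Gamma\in C^{k-1,\alpha}$ gives a $C^{k-1,\alpha}$ flow, hence $\partial\exp_p\in C^{k-2,\alpha}$, and the two Jacobian factors in the pull-back account for the two-derivative loss, matching the ``at least $C^{k-2,\alpha}$'' in the statement.
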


Last, the connection Laplacian of a vector field simplifies in harmonic coordinates and can be expressed in terms of $g$, $\partial g$, and the Ricci curvature. 
Let $\{x^a\}$ be a harmonic coordinate system on $M$ and $X = X^a \frac{\partial}{\partial x^a}$ be a vector field.

Let $\nabla_g$ denote the Levi-Civita connection of $(M,g)$ and $\nabla_g^2$ the associated connection Laplacian  \cite{CLN06} on the tangent bundle $TM$. Denote $\Delta_g$ to be the Laplace-Beltrami operator of $(M,g)$. When there is no danger of confusion, we will ignore the subscript $g$.
 Note that 
\be
\Delta x^c = g^{ab} \Gamma_{ab}^c =0,
\ee
and the Ricci curvature in the harmonic coordinates can be expressed as 
\be
- \Ric^{\,c}_d = g^{ab} \partial_a \Gamma^c_{db} + \partial_m g^{ab}  \Gamma^c_{ab} + g^{ab} \Gamma^e_{db} \Gamma_{ae}^c.
\ee
Hence,
\begin{eqnarray}
\label{eq:elliptic}
\nabla^2 \left(X^c \partial_c \right)
&=&- g^{ab}\left(\partial_a \partial_b X^c +2 \Gamma_{bd}^c \partial_a X^d + \Gamma_{ae}^c \Gamma_{bd}^e X^d + \partial_a \Gamma_{bd}^c  X^d \right) \partial_c \nonumber \\
&=&  \left(-g^{ab}\partial_a \partial_b X^c - 2g^{ab} \Gamma_{bd}^c \partial_a X^d  +\partial_d g^{ab}  \Gamma^c_{ab} X^d + Rc^{\,c}_d X^d \right) \partial_c. 
\end{eqnarray}
and the coefficients of $\nabla^2 X$ are controlled in $C^{1,\alpha}$ harmonic coordinates and the Ricci curvature bound.

It is known \cite{Gil74} that both $\nabla^2$ and $\Delta$ are self-adjoint, elliptic and that their spectra are discrete and non-positive real numbers with $-\infty$ as the only possible accumulating point. Furthermore, the eigenspaces are all finite dimensional. We denote the spectrum of $\nabla^2$ as $\left\{ -\lambda_{i}\right\} _{i=1}^{\infty}$, where  $0\leq\lambda_1 \leq \lambda_2  \leq \cdots$, counting algebraic multiplicity, and denote the corresponding orthonormal basis of eigenvector fields for $L^2(TM)$ as $\left\{ X_{i}\right\} _{i=1}^{\infty}$; that is, $\nabla^2 X_i = -\lambda_i X_i$ for all $i=1,2\cdots$ and 
\begin{equation}
\int_M \langle X_i,X_j\rangle d\sigma = \delta_{ij},
\end{equation} 
where $d\sigma$ is the Riemannian measure associated with $g$. Note that $\lambda_1$ may or may not be $0$ due to the topological constraint. Also denote the spectrum of $\Delta$ as $\left\{ -\nu_{i}\right\} _{i=1}^{\infty}$, counting algebraic multiplicity, where  $0=\nu_1 \leq \nu_2  \leq \cdots$ and denote the corresponding orthonormal basis of eigenfunctions for $L^2(M)$ as $\left\{ \xi_{i}\right\} _{i=1}^{\infty}$. Note that compared with the connection Laplacian, $\nu_1$ is always $0$. 

The heat semigroup is the family of self-adjoint operators $e^{-t\nabla^2}$, $t > 0$, with a smooth heat kernel $k_{TM}(t,x,y)$, where $k_{TM}(t,x,y)$ is smooth in $x, y \in M$ and analytic in $t$ when $t >0$ \cite{Gil74}. More precisely, for any $X\in L^2(TM)$,
\be
e^{-t\nabla^2} X(x) = \int_M k_{TM}(t,x,y) X(y) d\sigma(y).
\ee

Given an $L^2(TM)$-orthonormal basis $\{X_i\}_{i=1}^{\infty}$ of the eigenvector fields, the heat kernel of $\nabla^{2}$ can be expressed as 
\[
k_{TM}\left(t,x,y\right)=\sum_{i=1}^{\infty} e^{-\lambda_{i}t}X_{i}(x)\otimes X_{i}(y),
\]
where $t>0$ and $z,w\in M$. Its Hilbert-Schmidt norm is defined as 
\be
\label{HSnorm}
\left\Vert k_{TM}\left(t,x,y\right)\right\Vert _{HS}^{2} = \mathrm{Tr}\left( k_{TM}(t,x,y)^* k_{TM}(t,x,y) \right).
\ee
A direct computation \cite{Singer_Wu:2012} shows that the Hilbert-Schmidt norm squared of the heat kernel can be written as the series
\begin{align}
\left\Vert k_{TM}\left(t,x,y\right)\right\Vert _{HS}^{2}=\sum_{i,j}e^{-\left(\lambda_{i}+\lambda_{j}\right)t}\left\langle X_{i}(x),X_{j}(x)\right\rangle \left\langle X_{i}(y),X_{j}(y)\right\rangle. \label{Background:HeatKernelHSExpansion}
\end{align}
Based on (\ref{Background:HeatKernelHSExpansion}), the VDM (\ref{Definition:VDM}) and tVDM (\ref{Definition:tVDM}) are proposed in \cite{Singer_Wu:2012}.
Throughout the paper, we write $\nabla_v \|k_{TM}(t,\cdot,\cdot)\|_{HS}$ to denote the covariant derivative with respect to the 
second variable at time $t$.

In what follows, we use $c$ and $C$ to denote constant which may vary line by line. We write $f_1 \sim^{C}_{c} f_2$ if there exist constants $c$ and $C$ such that $c f_2 \leq f_1 \leq C f_2$. For two positive values $c,C$, we write $c\ll C$ to denote that $c/C$ is sufficiently small.

\section{Universal Local Parametrization for Smooth Manifolds}
\label{sec:par}
To embed a Riemannian manifold with finite eigenvector fields (Theorem \ref{thm:emb}), we first show in Theorem \ref{thm:par} that we could parametrize local balls of closed manifolds via eigenvector fields. 
While the proof strategy follows the ideas of \cite{Jones_Maggioni_Schul:2010}, we provide several new estimates specific for the vector fields and connection Laplacian, which have their own independent interest. 

\begin{Assumption}
\label{Theorem2:MainAssumption0} 
For $R \leq R_z$, choose  $\delta_1>0$ so that $\delta_1^2 R^2 \ll1$ and choose $\delta_0 >0$ so that $\delta_0 \ll \delta_1$. 
We consider $t$ satisfying $\frac{1}{2} \delta_1^2 R^2 \leq t \leq \delta_1^2 R^2$ and $w, z \in B_{\delta_0 R}(z) \setminus B_{\frac{1}{2}\delta_0R}(z)$.
\end{Assumption}

\begin{restatable}[Parametrization via eigenvector fields for manifolds]{thm}{parthm}
\label{thm:par}
Let $(M^n, g)$ be a smooth closed manifold with smooth metric $g$, Ricci curvature bound $\kappa$ and diameter upper bound $D$. Fix $z\in M$ and assume that 
Assumption \ref{Theorem2:MainAssumption0} holds.
For $R \leq R_z$, there exist a constant $\tau=\tau(n, Q, \kappa, i_0, \alpha, \delta_1) >1$, and $n$ pairs of indices $(i_1, j_1), \cdots, (i_n, j_n)$ so that the map 
\begin{eqnarray}
\Phi_z: B_{\tau^{-1}R}(z) &\rightarrow& \mathbb{R}^n\label{MainTheorem1:Definition:Phi}\\
x &\mapsto& (\langle X_{i_1}, X_{j_1} \rangle(x), \cdots, \langle X_{i_n}, X_{j_n} \rangle(x))\,, \nonumber
\end{eqnarray}
is a parametrization of $B_{\tau^{-1}R}(z)$, where the associated eigenvalues satisfy
\begin{equation}
\tau^{-1}R^{-2} \leq \lambda_{i_1}, \cdots, \lambda_{i_n}, \lambda_{j_1}, \cdots, \lambda_{j_n} \leq \tau R^{-2}\,.
\end{equation}
Furthermore, if $\Phi_z$ is weighted properly by
\begin{eqnarray}
\tilde{\Phi}_z: B_{\tau^{-1}R}(z) &\rightarrow& \mathbb{R}^n \\
x &\mapsto& (\mu_1\langle X_{i_1}, X_{j_1} \rangle(x), \cdots, \mu_n \langle X_{i_n}, X_{j_n} \rangle(x))\,, \nonumber
\end{eqnarray}
where
\begin{equation}
\mu_{k}:=\mu_{i_k j_k} = \left(\fint_{B_{\tau^{-1}R}(z)} \| X_{i_{k}}\|^2_g \right)^{-1/2}\left(\fint_{B_{\tau^{-1}R}(z)} \| X_{j_{k}}\|^2_g \right)^{-1/2}\,,
\end{equation}
then for any $x,y \in B_{\tau^{-1}R}(z)$, we have
\begin{equation}\label{isom_bd}
\frac{1}{\tau R} d_g(x,y) \leq \left\| \tilde{\Phi}(x) - \tilde{\Phi}(y) \right\|_{\mathbb{R}^n} \leq \frac{\tau}R d_g(x,y)\,.
\end{equation}
Here, the constants $\mu_k$, $k=1,\cdots, n$, satisfy
\begin{equation}
\mu_k \leq C,
\end{equation}
where $C$ is a constant dependent only on $n, Q, \kappa, \alpha, i_0$, and $V$. 
\end{restatable}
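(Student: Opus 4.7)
The plan is to adapt the scalar-Laplacian argument of Jones--Maggioni--Schul \cite{Jones_Maggioni_Schul:2010} to the connection Laplacian, using the expansion \eqref{Background:HeatKernelHSExpansion} as the source of coordinate candidates. The key test functions are $F_{w,t}(x) := \|k_{TM}(t, w, x)\|_{HS}^2$. Working in a harmonic coordinate chart $\phi: U \to B_{R_z}(z)$ (whose existence follows from Anderson's lemma quoted above), a short-time parametrix expansion for the connection-Laplacian heat kernel gives, on the shell $B_{\delta_0 R}(z) \setminus B_{\delta_0 R/2}(z)$,
\[
k_{TM}(t, z, w) = (4\pi t)^{-n/2} e^{-d_g(z,w)^2/(4t)} P_z^w + (\text{error}),
\]
with the error controlled by $\|g\|_{C^{1,\alpha}}$ and $\kappa$. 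In particular, for $t \sim \delta_1^2 R^2$ the gradient $\nabla_x F_{w,t}(x)|_{x=z}$ points, to leading order, along the harmonic-coordinate vector from $z$ to $w$, with magnitude comparable to $R^{-n-1}$.

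Next I would pick $n$ shell points $w_1, \ldots, w_n$ and times $t_k \in [\tfrac{1}{2}\delta_1^2 R^2, \delta_1^2 R^2]$ so that the harmonic-chart positions $\{w_k - z\}$ form a well-conditioned basis of $\mathbb{R}^n$. By the previous step the matrix $(\nabla F_{w_k, t_k}(z))_{k=1}^n$ is non-degenerate, uniformly in the geometric parameters. Expanding via \eqref{Background:HeatKernelHSExpansion} realizes $F_{w_k, t_k}$ as an infinite linear combination of the scalars $\langle X_i, X_j\rangle$ with coefficients $e^{-(\lambda_i+\lambda_j)t_k}\langle X_i, X_j\rangle(w_k)$. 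I would then truncate to pairs with $\lambda_i, \lambda_j \in [\tau^{-1}R^{-2}, \tau R^{-2}]$, bounding the discarded tail via Weyl-type decay of $\sum_{\lambda_i > \tau R^{-2}} e^{-\lambda_i t_k}$ combined with the pointwise sup and gradient bounds on $X_i$ supplied by Proposition \ref{conj:est:sup} / Corollary \ref{cor:est:sup} and Lemma \ref{conj:est:kTM}. A linear-algebra / pigeonhole extraction, as in JMS, then produces $n$ pairs $(i_1, j_1), \ldots, (i_n, j_n)$ from the finite band for which $(\nabla \langle X_{i_k}, X_{j_k}\rangle(z))_{k=1}^n$ is non-degenerate with quantitative bounds, yielding the parametrization $\Phi_z$ on $B_{\tau^{-1}R}(z)$ by the inverse function theorem applied in the harmonic chart.

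For the bi-Lipschitz estimate \eqref{isom_bd} I would upgrade the non-degeneracy at $z$ to a uniform one on $B_{\tau^{-1}R}(z)$, by propagating the gradient bounds on $\langle X_{i_k}, X_{j_k}\rangle$ using the elliptic system \eqref{eq:elliptic} and the $C^{1,\alpha}$ control on $g$. The weights $\mu_k$ are engineered precisely to rescale $\langle X_{i_k}, X_{j_k}\rangle$ so that $\|\mu_k \langle X_{i_k}, X_{j_k}\rangle\|_{C^1}$ is of order $R^{-1}$: the $L^2$-average $\fint_{B_{\tau^{-1}R}(z)}\|X_i\|^2$ is essentially the spectral mass that $X_i$ concentrates at scale $R$, and dividing by it removes the $\lambda_i$-dependence of the amplitude while preserving the $R^{-1}$ scaling of the derivative. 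The bi-Lipschitz constants then follow by integrating the uniform Jacobian bounds along minimizing geodesics of length $\leq \tau^{-1}R$. The upper bound $\mu_k \leq C$ reduces to a lower bound on $\fint \|X_i\|^2$, obtained by combining the global normalization $\int_M \|X_i\|^2 = 1$ with the sup bound $\|X_i\|_\infty \leq C(\lambda_i)$ from Corollary \ref{cor:est:sup}.

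The main obstacle is the quantitative short-time heat-kernel expansion on the shell with explicit gradient control, because both the leading-order non-degeneracy of the Jacobian and the truncation error estimate depend on it, and all constants must track only $n, Q, \kappa, i_0, V, \alpha, \delta_1$. Once Proposition \ref{conj:est:sup}, Corollary \ref{cor:est:sup}, and Lemma \ref{conj:est:kTM} deliver uniform sup and gradient bounds on $X_i$, $\nabla X_i$, $k_{TM}$, and $\nabla k_{TM}$ in this class, the remainder of the argument is a quantitative vectorization of the JMS scheme, with the elliptic system \eqref{eq:elliptic} playing the role of the scalar elliptic equation.
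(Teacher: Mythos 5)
Your overall plan tracks the paper's strategy: probe with the heat-kernel test functions $F_{w,t}=\|k_{TM}(t,w,\cdot)\|_{HS}^2$ on the shell, use the short-time expansion (Lemma~\ref{conj:est:kTM}) to get a quantitatively non-degenerate gradient, truncate via (\ref{Background:HeatKernelHSExpansion}) to a band $[\tau^{-1}R^{-2},\tau R^{-2}]$, and extract $n$ pairs $(i_k,j_k)$ with a non-degenerate gradient matrix. The paper implements the ``well-conditioned basis'' part of your step differently: rather than choosing $n$ shell points at once, it iterates, choosing $v_{k+1}$ orthogonal to the already-obtained gradients $\nabla\langle X_{i_1},X_{j_1}\rangle(z),\dots,\nabla\langle X_{i_k},X_{j_k}\rangle(z)$, so that the resulting Jacobian is lower-triangular with a quantitatively bounded diagonal. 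Your one-shot version is probably salvageable but requires its own argument for why $n$ of the infinitely many $\langle X_i,X_j\rangle$ whose linear combinations reproduce $F_{w_1,t_1},\dots,F_{w_n,t_n}$ can be selected with a well-conditioned Jacobian, which in JMS (and here) is exactly what the inductive orthogonality step supplies.

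The genuine gap is your justification of $\mu_k \leq C$. You claim this reduces to a lower bound on $\fint_{B}\|X_i\|_g^2$ via the global normalization $\int_M\|X_i\|_g^2=1$ together with the sup bound from Corollary~\ref{cor:est:sup}. That corollary only gives a \emph{local} sup bound in terms of the \emph{local} $L^2$ average $\fint_B\|X_i\|_g^2$, so it cannot produce a lower bound on that same quantity; and even a global sup bound combined with the global normalization does not prevent an eigenvector field from concentrating its mass far from $B$, in which case $\fint_B\|X_i\|_g^2$ is tiny and $\mu_{ij}$ is huge. Indeed, no uniform lower bound on $\fint_B\|X_i\|_g^2$ over \emph{all} band-limited $i$ is available or claimed. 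The paper's argument (Lemma~\ref{lem:main}) only proves $\mu_{ij}\leq C$ for \emph{some} pair in the band, and the selection is built into the extraction: one combines the lower bound on $|\nabla_v\|k_{TM}(t,w,z)\|_{HS}^2|$ from Lemma~\ref{conj:est:kTM} with the Cauchy--Schwarz bound of the truncated sum by $\bigl(\sum_{\lambda_i,\lambda_j\in\Lambda}\mu_{ij}^{-2}\bigr)^{1/2}$ and a Weyl-type count (\ref{ineq:weyl_bd}) of the number of terms, forcing at least one $\mu_{ij}^{-1}$ to be bounded below. This is qualitatively different from (and not recoverable by) a pointwise estimate on individual $X_i$; you must fold the heat-kernel lower bound and the eigenvalue count into the pair-selection step.
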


This Theorem indicates that the mapping $\Phi$ of a local ball is an embedding and if the weighting of the parametrization is chosen property, the embedding $\tilde{\Phi}$ is with low distortion. It further shows that the eigenvalues should not be too large or too small, which means that locally the manifold could be well parameterized by ``band-pass filtering''.

The proof strategy of Theorem \ref{thm:par} is summarized below. We provide $C^{1,\alpha}$ bounds of the eigenvector fields in Proposition \ref{conj:est:sup}. Next, we provide estimates on $\|k_{TM}(t,w,z)\|^2_{HS}$ and its gradient in Lemma \ref{conj:est:kTM}; with the bounds of eigenvector fields, we provide a control on the truncated series expansion of $\|k_{TM}(t,w,z)\|^2_{HS}$ and its gradient in Lemmas \ref{lem:trucation} and \ref{lem:modified_truncation}. In Lemma \ref{lem:main}, we show how to choose desired eigenvector fields. Last, in Theorem \ref{thm:par}, we show that the parametrization defined via those appropriately chosen eigenvector fields has the desired properties.

The key step toward the proof is the technical lemma saying that for a given eigenvector fields, locally its $C^{1,\alpha}$ norm could be well controlled by its local average $L^2$ norm. To obtain this technical lemma, we need the following Caccioppoli's type inequality.
 Note that although in this section the metric we consider is smooth, the control could be obtained when the metric is as weak as $C^{1,\alpha}$. Since the lemma has its own interest and we need the rough metric version for the eigenvector field perturbation argument later, we provide the proof under the weak assumption that the metric is $C^{1,\alpha}$. 

\begin{lem}[Caccioppoli's type inequality]
\label{lem:Caccioppoli}
Suppose that $g \in C^{1,\alpha}$ and $U$ is a bounded solution of $\nabla^2 U =0$ in $B=B_R(z)$, where $R<R_z$, with the Dirichlet boundary condition on $\partial B$. Then for $0 < r \leq R/2$,
\be
\label{est:Caccioppoli}
\|\nabla U \|_{L^2(B_r(z))} \leq C R^{-1} \| U \|_{L^2(B)}.
\ee 
for some $C=C(Q)$. 
\end{lem}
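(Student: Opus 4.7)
The plan is to run the classical Caccioppoli argument: test the equation against $\eta^2 U$ for a suitable cutoff $\eta$, integrate by parts using the identity $\nabla^2 = -\nabla^*\nabla$, and absorb the resulting cross term by Young's inequality.

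First, I would choose a smooth cutoff $\eta$ with $\eta \equiv 1$ on $B_{R/2}(z)$, $\mathrm{supp}\,\eta \subset B_R(z) = B$, and $|\nabla \eta|_g \leq C(Q)/R$. Such a cutoff is obtained from a standard Euclidean bump in the harmonic chart around $z$, since $Q^{-1}\delta \leq g \leq Q\delta$ transfers the Euclidean gradient bound $\lesssim 1/R$ to the $g$-gradient up to a factor depending only on $Q$. Because $\eta^2 U$ has compact support inside $B$, the Dirichlet hypothesis on $\partial B$ plays no direct role in the estimate; what is used is just the adjoint identity
\be
\int_B \langle \nabla^* \nabla U,\, V\rangle\, d\sigma = \int_B \langle \nabla U,\, \nabla V\rangle\, d\sigma
\ee
for $V$ compactly supported in $B$, which, combined with $\nabla^2 = -\nabla^*\nabla$ and $\nabla^2 U = 0$, yields $\int_B \langle \nabla U,\, \nabla(\eta^2 U)\rangle\, d\sigma = 0$.

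Next, expanding $\nabla(\eta^2 U) = 2 \eta \, \nabla \eta \otimes U + \eta^2\, \nabla U$ gives
\be
\int_B \eta^2 |\nabla U|^2\, d\sigma = -2\int_B \eta\, \langle \nabla U,\, \nabla \eta \otimes U\rangle\, d\sigma,
\ee
and Young's inequality $2|a||b| \leq \tfrac{1}{2}|a|^2 + 2|b|^2$ applied with $a = \eta |\nabla U|$, $b = |\nabla \eta||U|$ absorbs half of the left-hand side:
\be
\tfrac{1}{2}\int_B \eta^2 |\nabla U|^2\, d\sigma \leq 2\int_B |\nabla \eta|^2 |U|^2\, d\sigma \leq \frac{C(Q)}{R^2}\int_B |U|^2\, d\sigma.
\ee
Since $\eta \equiv 1$ on $B_{R/2}(z) \supset B_r(z)$ whenever $r \leq R/2$, the left-hand side dominates $\|\nabla U\|_{L^2(B_r(z))}^2$, and (\ref{est:Caccioppoli}) follows with a constant depending only on $Q$.

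The one technical point, which I expect to be the main obstacle, is justifying the integration by parts when $g$ is only $C^{1,\alpha}$. In harmonic coordinates the operator $\nabla^2$ has the form (\ref{eq:elliptic}) with $C^{0,\alpha}$ coefficients, so any bounded solution of $\nabla^2 U = 0$ lies in $H^1_{\mathrm{loc}}(B,TM)$ by standard interior regularity for second order linear elliptic systems, and the displayed identities then read off as the weak formulation of the equation tested against the admissible section $\eta^2 U$.
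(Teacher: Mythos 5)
Your proof is correct and takes essentially the same route as the paper: a cutoff function supported in $B$, testing against $\eta^2 U$, integration by parts, and absorption of the cross term by Young's inequality. The only genuine additions are cosmetic (you use the abstract adjoint identity $\nabla^2 = -\nabla^*\nabla$ where the paper writes out the index computation $\int_B \psi^2 g^{ab}g_{cd}\nabla_a U^c \nabla_b U^d\,d\sigma$ explicitly) and one substantive but welcome remark: you observe that with $g$ only $C^{1,\alpha}$ and $U$ merely a bounded solution, one must invoke interior elliptic regularity to place $U$ in $H^1_{\mathrm{loc}}$ before the integration by parts is licit, a point the paper's proof passes over silently. That observation fills a small gap rather than opening one, and your regularity justification via the form \eqref{eq:elliptic} with Hölder coefficients is sound.
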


\begin{proof} 
Choose a smooth cut-off function $\psi$ on $M$ so that 
\begin{eqnarray}
& 0 \leq \psi \leq 1, \quad \|\nabla \psi\|_g \leq 2/R, \nonumber \\
& \psi \equiv 1 \mbox{ on } B_r(z), \mbox{ and } \\
& \psi \equiv 0 \mbox{ outside } B_R(z). \nonumber
\end{eqnarray}
Since $\nabla^2 U =0$, by integration by parts, we have
\be
 \int_B \psi^2 \|\nabla U\|_g^2 d\sigma=\int_B \psi^2 g^{ab}g_{cd} \nabla_a U^c \nabla_b U^d d\sigma = - 2\int_B \psi g^{ab}g_{cd} U^d \nabla_a U^c \nabla_b \psi d\sigma
\ee

By the assumption that $Q^{-1} \leq |g| \leq Q$ and the choice of $\psi$, we have 
\begin{align}
 \int_B \psi^2 \|\nabla U\|_g^2 d\sigma
\leq C \int_B \frac{2}{R} \psi \|U\|_g   \|\nabla U\|_g d\sigma
\end{align}
for some $C=C(Q)$.
Applying Young's inequality and choosing $\epsilon = \frac{1}{2 C}$, we obtain
\begin{align}
 \int_B \psi^2 \|\nabla U\|_g^2 d\sigma 
\leq&\,  \epsilon C \int_B \psi^2 \|\nabla U\|^2_g d\sigma + \frac{4 C }{\epsilon R^2} \int_B  \|U\|^2_g d\sigma \nonumber \\
\leq&\, \frac{1}{2} \int_B \psi^2 \|\nabla U\|^2_g d\sigma + \frac{8 C^2 }{ R^2} \int_B  \|U\|^2_g d\sigma\,,
\end{align}
and thus
\be
\int_{B_r(z)}  \|\nabla U\|_g^2 d\sigma 
\leq  \int_B \psi^2 \|\nabla U\|_g^2 d\sigma 
\leq \frac{16 C^2 }{ R^2} \int_B  \|U\|^2_g d\sigma
\ee
which implies the estimate (\ref{est:Caccioppoli}). 
\end{proof}

Denote $B=B_R(z)$, $R<R_z$. Let $\xi_i^B$ be the $i$-th eigenfunction of the Laplace-Beltrami operator $\Delta$ satisfying the Dirichlet boundary condition with the eigenvalue $-\nu^B_i$; that is, $\Delta^B \xi^B_i =- \nu^B_i \xi^B_i$ on $B$ and $\xi^B_i =0$ on $\partial B$. We sort the eigenvalues by $0\leq \nu^B_1 \leq \nu^B_2 \leq \cdots$ and assume that the eigenfunctions $\{\xi^B_i\}$ are $L^2(B,g)$ normalized.

\begin{lem}
\label{lem:local_L^2} 
Assume that $g\in C^{1, \alpha}$. Let $\xi^B_k$ be the Dirichlet eigenfunctions of $\Delta$ on $B$ with the eigenvalue $\nu^B_k$. Then we have the estimate
\be
\label{est:local_L^2}
\| \xi^B_k X_i \|_{L^{\frac{2n}{n-2}}(B)} \leq C \left( (\nu^B_k + \lambda_i )^{1/2}+ 2\nu^B_k\right) (\nu^B_k)^{\beta} \| X_i \|_{L^2(B)}
\ee
for some $C=C(n, Q)$, where $\beta =  \frac{n-1}{2}$ for $n$ odd and $\beta = \frac{n}{2}$ for $n$ even.
\end{lem}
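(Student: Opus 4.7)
The plan is to pass from the $L^{2n/(n-2)}$ norm to the $L^{2}$ norm of the gradient via the Sobolev embedding $W^{1,2}_0(B) \hookrightarrow L^{2n/(n-2)}(B)$, which is available because $\xi^B_k$ vanishes on $\partial B$ and hence so does the product $\xi^B_k X_i$. Under the bi-Lipschitz equivalence $Q^{-1}\delta \le g \le Q\delta$ valid in the harmonic chart covering $B$, the embedding constant depends only on $n$ and $Q$. Applying the Leibniz rule
\begin{equation*}
\nabla(\xi^B_k X_i) \;=\; d\xi^B_k \otimes X_i \;+\; \xi^B_k\, \nabla X_i ,
\end{equation*}
we reduce the task to bounding $\| |d\xi^B_k|\,|X_i|\|_{L^2(B)}$ and $\|\xi^B_k\,|\nabla X_i|\|_{L^2(B)}$ separately.

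The second piece is handled quickly: pulling out $\|\xi^B_k\|_{L^\infty(B)}$ and invoking the global eigenvalue identity $\|\nabla X_i\|_{L^2(M)}^2 = \lambda_i$, obtained by integrating $-\langle \nabla^2 X_i, X_i\rangle = \lambda_i |X_i|^2$ over $M$, this piece is bounded by $\|\xi^B_k\|_{L^\infty(B)}\,\lambda_i^{1/2}\,\|X_i\|_{L^2(B)}$. For the first piece, I would use the identity $|d\xi^B_k|^2 = \tfrac{1}{2}\Delta(\xi^B_k)^2 + \nu^B_k (\xi^B_k)^2$ coming from the eigenvalue equation, integrate $\int_B |d\xi^B_k|^2\,|X_i|^2$ by parts (the boundary contribution vanishes since $\xi^B_k|_{\partial B}=0$), and then apply the Bochner--Weitzenb\"ock formula to rewrite $\Delta|X_i|^2$ in terms of $|\nabla X_i|^2$, $\lambda_i|X_i|^2$, and a Ricci term controlled by $\kappa$. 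After Cauchy--Schwarz absorption, this yields the combined bound
\begin{equation*}
\|\nabla(\xi^B_k X_i)\|_{L^2(B)} \;\le\; C\, \|\xi^B_k\|_{L^\infty(B)}\,\bigl[(\nu^B_k + \lambda_i)^{1/2} + 2\nu^B_k\bigr]\, \|X_i\|_{L^2(B)}.
\end{equation*}

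It remains to bound $\|\xi^B_k\|_{L^\infty(B)}$ by $C(n,Q)(\nu^B_k)^\beta$. I would obtain this by a standard Sobolev bootstrap: starting from $\|\xi^B_k\|_{L^2(B)}=1$, repeatedly combine interior elliptic regularity for $\Delta\xi^B_k=-\nu^B_k\xi^B_k$ with the chain $W^{k,2}\hookrightarrow L^{2n/(n-2k)}$, each step contributing a factor of $(\nu^B_k)^{1/2}$. The iteration terminates once the target exponent exceeds $n/2$, and the parity of $n$ dictates whether the last half-step is needed, giving $\beta=(n-1)/2$ for $n$ odd and $\beta=n/2$ for $n$ even. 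The main technical nuisance throughout is maintaining constants depending only on $n$ and $Q$; this is ensured by working in harmonic coordinates per Anderson's Main Lemma and using the $C^{1,\alpha}$ bound on $g$ to control the coefficients of $\Delta$ and $\nabla^2$ appearing in \eqref{eq:elliptic}.
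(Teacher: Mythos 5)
There is a genuine gap in your handling of the piece $\|\xi^B_k\,\nabla X_i\|_{L^2(B)}$. You propose to pull out $\|\xi^B_k\|_{L^\infty(B)}$ and then invoke the \emph{global} identity $\|\nabla X_i\|_{L^2(M)}^2 = \lambda_i$, and you record the resulting bound as $\|\xi^B_k\|_{L^\infty(B)}\,\lambda_i^{1/2}\,\|X_i\|_{L^2(B)}$. But the global identity only gives
\begin{equation*}
\|\xi^B_k\,\nabla X_i\|_{L^2(B)} \;\le\; \|\xi^B_k\|_{L^\infty(B)}\,\|\nabla X_i\|_{L^2(B)} \;\le\; \|\xi^B_k\|_{L^\infty(B)}\,\|\nabla X_i\|_{L^2(M)} \;=\; \|\xi^B_k\|_{L^\infty(B)}\,\lambda_i^{1/2},
\end{equation*}
with $\|X_i\|_{L^2(M)}=1$ on the right; the factor $\|X_i\|_{L^2(B)}$ you append is spurious. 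There is no way to pass from $\|\nabla X_i\|_{L^2(M)}$ back to the \emph{local} $\|X_i\|_{L^2(B)}$, since $X_i$ could be concentrated far away from $B$. The whole point of this lemma, and the reason it is used the way it is in Proposition \ref{conj:est:sup}, is that the right-hand side involves the local $L^2$ norm $\|X_i\|_{L^2(B)}$. Your version would only give a bound by the global normalization, which breaks the bootstrap in the proposition. Separately, your first piece — rewriting $\int_B|d\xi^B_k|^2|X_i|^2$ via $|d\xi^B_k|^2 = \tfrac12\Delta(\xi^B_k)^2 + \nu^B_k(\xi^B_k)^2$, integrating by parts twice, and applying Bochner — actually reintroduces $\|\xi^B_k\nabla X_i\|_{L^2(B)}^2$ on the right (Bochner gives $\tfrac12\Delta|X_i|^2 = |\nabla X_i|^2 - \lambda_i|X_i|^2$), so that piece does not stand alone and feeds back into the quantity you are trying to bound.

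The paper avoids both problems by \emph{not} bounding $\|\nabla X_i\|_{L^2(B)}$ directly. Because $\xi^B_k X_i$ vanishes on $\partial B$, one has the clean identity $\int_B\|\nabla(\xi^B_k X_i)\|^2_g = \int_B\langle \xi^B_k X_i,\,-\nabla^2(\xi^B_k X_i)\rangle_g$, and a short computation gives $\nabla^2(\xi^B_k X_i) = -(\nu^B_k+\lambda_i)\xi^B_k X_i + 2g^{ab}\nabla_a\xi^B_k\nabla_b X_i$. Plugging this in and using $\|\nabla(\xi^B_k X_i)\|^2_g \ge \|\xi^B_k\nabla X_i\|^2_g - 2|\langle d\xi^B_k\otimes X_i,\xi^B_k\nabla X_i\rangle|$ produces a \emph{quadratic inequality} in $\|\xi^B_k\nabla X_i\|_{L^2(B)}$ whose right-hand side involves only $\|\xi^B_k\|_{L^\infty(B)}$, $\|\nabla\xi^B_k\|_{L^\infty(B)}$, and, crucially, $\|X_i\|_{L^2(B)}$. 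Solving the quadratic then yields $\|\xi^B_k\nabla X_i\|_{L^2(B)}\le C\bigl((\nu^B_k+\lambda_i)^{1/2}+2\nu^B_k\bigr)(\nu^B_k)^\beta\|X_i\|_{L^2(B)}$, and adding back the $\nabla\xi^B_k\otimes X_i$ piece gives the claim. If you want to repair your proposal, replace the global-identity step by this integration-by-parts/quadratic-absorption argument (or a Caccioppoli-type local gradient estimate that retains $\|X_i\|_{L^2}$ on a larger ball); the rest of your outline — Sobolev embedding with constants from the harmonic chart, and a bootstrap for $\|\xi^B_k\|_{L^\infty}$ and $\|\nabla\xi^B_k\|_{L^\infty}$ — is compatible with the paper's approach.
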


\begin{proof}
By the Sobolev embedding inequality, it suffices to prove that
\be
\| \nabla (\xi^B_k X_i) \|_{L^2(B)} \leq C \left( (\nu^B_k + \lambda_i )^{1/2}+ 2\nu^B_k\right) (\nu^B_k)^{\beta} \| X_i \|_{L^2(B)}.
\ee
By a direct computation, we have 
\be
\nabla^2 (\xi^B_k X_i) =- (\nu^B_k + \lambda_i) \xi^B_k X_i + 2 g^{ab}\nabla_a \xi^B_k \nabla_b X_i
\ee
and
\be
\| \nabla (\xi^B_k X_i) \|^2_g 
\geq  \| \xi^B_k \nabla X_i\|^2_g - 2 | g^{cd} g_{ab} \xi^B_k X_i^b \nabla_d \xi^B_k \nabla_c X_i^a |.  
\ee
Combining them, we have 
\begin{align}
 \int_{B} \| \xi^B_k \nabla X_i\|^2_g d\sigma 
\leq&\, \int_{B} \| \nabla (\xi^B_k X_i) \|^2_g d\sigma +  2 \int_{B} | g^{cd} g_{ab} \xi^B_k X_i^b \nabla_d \xi^B_k \nabla_c X_i^a | d\sigma \nonumber \\
= &\, \int_{B} \langle \xi^B_k X_i, -\nabla^2(\xi^B_k X_i) \rangle_g d\sigma  +  2 \int_{B} | g^{cd} g_{ab} \xi^B_k X_i^b \nabla_d \xi^B_k \nabla_c X_i^a | d\sigma \nonumber \\
=& \,\int_{B} \langle \xi^B_k X_i,(\nu^B_k + \lambda_i) \xi^B_k X_i + 2 g^{ab}\nabla_a \xi^B_k \nabla_b X_i \rangle d\sigma\nonumber \\
&\qquad +  2 \int_{B} | g^{cd} g_{ab} \xi^B_k X_i^b \nabla_d \xi^B_k \nabla_c X_i^a | d\sigma \nonumber\\
\leq&\, (\nu^B_k + \lambda_i) \int_{B} \| \xi^B_k X_i \|^2_g d\sigma + 4\int_{B} | g^{cd} g_{ab} \xi^B_k X_i^b \nabla_d \xi^B_k \nabla_c X_i^a | d\sigma 
\end{align}
Applying the Cauchy-Schwarz inequality  and Lemma 3.5.3 in \cite{Jones_Maggioni_Schul:2010}, there exists some constant $C=C(n,Q)$ such that
\begin{align}
 \| \xi^B_k \nabla X_i\|^2_{L^2(B)}
 \leq&\, (\nu^B_k + \lambda_i) \| \xi^B_k \|^2_{L^{\infty}(B) }\|X_i \|^2_{L^2(B)} 
 + 4\| \nabla \xi^B_k\|_{L^{\infty}(B)}\|X_i\|_{L^2(B)}  \| \xi^B_k \nabla X_i\|_{L^2(B)} \nonumber \\
 \leq&\, C (\nu^B_k + \lambda_i) (\nu^B_k)^{2\beta} \|X_i \|^2_{L^2(B)} + 4(\nu^B_k)^{\beta +1} \|X_i \|_{L^2(B)} \| \xi^B_k \nabla X_i\|_{L^2(B)}
\end{align}
where $\beta = \frac{n-1}{2}$ for $n$ odd and $\beta = \frac{n}{2}$ for $n$ even.
By a direct bound, this quadratic inequality implies that 
\be
 \| \xi^B_k \nabla X_i\|^2_{L^2(B)} \leq C \left(  (\nu^B_k + \lambda_i)^{1/2} + 2\nu^B_k \right)(\nu^B_k)^{\beta}\|X_i \|^2_{L^2(B)}
\ee
for some $C= C(n,Q)$. 

Finally, we can bound $\| \nabla (\xi^B_k X_i) \|_{L^2(B)}$ by 
\begin{align}
\| \nabla (\xi^B_k X_i) \|_{L^2(B)} 
\leq&\, \| \nabla \xi^B_k \|_{L^{\infty}(B)} \|X_i \|^2_{L^2(B)}+ \| \xi^B_k \nabla X_i\|^2_{L^2(B)} \nonumber \\ 
\leq&\, C \left(  (\nu^B_k + \lambda_i)^{1/2} + 2\nu^B_k \right)(\nu^B_k)^{\beta}\|X_i \|^2_{L^2(B)}\,.
\end{align}
\end{proof}

\begin{prop}
\label{conj:est:sup}  
Suppose $g\in C^{1,\alpha}$. 
Let $P_1(x)$ and $P_2(x)$ be polynomials defined as 
\begin{equation}\label{poly}
P_1(x) = (1+x)^{\lceil \frac{n-2}{4} \rceil} \mbox{ and } P_2(x)=(1+x)^{\lceil \frac{n-2}{4} \rceil +1},
\end{equation} 
where  $\lceil x \rceil$ denotes the smallest integer not less than $x$.

For any $R \leq R_z$, we have the following for the $i$-th eigenvector field of $\nabla^2$:

\begin{equation}
\label{est:sup}
\left\Vert X_i \right\Vert _{C^0 (B_{\frac{R}{2}}(z))}
\leq C P_1(\lambda_i R^2) \left(\fint_{B_{R}(z)} \|X_i \|^2_g \right)^{1/2}\,,
\end{equation}

\begin{equation}
\label{est:Dsup}
\left\Vert \nabla X_i \right\Vert _{C^0(B_{\frac{R}{2}}(z))}\leq C \frac{1}{R}P_2 (\lambda_i R^2)\left(\fint_{B_{R}(z)}  \|X_i \|^2_g \right)^{1/2}\,,
\end{equation}

\begin{equation}
\label{est:Hsup}
\left\Vert X_i \right\Vert _{C^{\alpha}(B_{\frac{R}{2}}\left(z)\right)}
\leq C  \frac{1}{R^{\alpha}} P_1(\lambda_i R^2)\left(\fint_{B_{R}(z)} \|X_i \|^2_g\right)^{1/2}
\end{equation}

and
\begin{equation}
\label{est:DHsup}
\left\Vert \nabla X_i \right\Vert _{C^{\alpha}(B_{\frac{R}{2}}(z))}\leq C \frac{1}{R^{1+\alpha}}P_2(\lambda_i R^2)\left(\fint_{B_{R}(z)}  \|X_i \|^2_g \right)^{1/2}\,,
\end{equation}
where $C$ depends on constants $n, Q, R, \kappa$, and $\| g \|_{C^{1,\alpha}}$. 

%
\end{prop}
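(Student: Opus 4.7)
The plan is to first establish the $L^\infty$ bound (\ref{est:sup}) via a Moser-type iteration, then deduce the gradient bound (\ref{est:Dsup}) from interior Schauder estimates applied to the elliptic system (\ref{eq:elliptic}), and finally obtain the H\"older bounds (\ref{est:Hsup}) and (\ref{est:DHsup}) via Schauder's theorem for systems with $C^{0,\alpha}$ coefficients. In harmonic coordinates, $\nabla^2 X_i = -\lambda_i X_i$ reads as a linear second-order elliptic system on the components $X^c$, with lower-order coefficients controlled in $C^{0,\alpha}$ by $g\in C^{1,\alpha}$ and $|\Ric|\leq\kappa$, and with zeroth-order forcing $-\lambda_i X^c$. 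Since $|X_i|_g$ is comparable to the Euclidean norm of the coordinate components with constants depending only on $Q$, vector-valued and scalar iteration arguments differ only cosmetically.

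For the $L^\infty$ bound I would follow the Jones-Maggioni-Schul strategy, using Lemma \ref{lem:local_L^2} as the inductive engine. A direct route is to test the eigenvector equation against $|X_i|^{p-2} X_i$ together with a smooth cutoff, obtaining a Caccioppoli-type inequality $\|\nabla(|X_i|^{p/2})\|_{L^2(B')}^2 \lesssim (\lambda_i + (R-R')^{-2})\,\||X_i|^{p/2}\|_{L^2(B)}^2$ on nested balls; combining with the Sobolev embedding $W^{1,2}\hookrightarrow L^{2n/(n-2)}$ upgrades $L^p$ to $L^{pn/(n-2)}$ at the cost of a multiplicative factor $(\lambda_i + (R-R')^{-2})^{1/p}$. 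Iterating with geometrically shrinking radii $R_k \downarrow R/2$ and exponents $p_k = 2(n/(n-2))^k$, the iteration terminates in $\lceil (n-2)/4 \rceil$ rounds once $p_k$ crosses the Sobolev threshold into $L^\infty$, and the geometric series in the logarithms of the constants converges to produce the polynomial $P_1(\lambda_i R^2)$ multiplied by the spatial normalization $(\fint_{B_R}|X_i|_g^2)^{1/2}$. Equivalently, the identity $\Delta|X_i|^2 = 2|\nabla X_i|^2 - 2\lambda_i|X_i|^2$ in harmonic coordinates reduces the iteration to the scalar Moser scheme applied to $|X_i|^2$ as a subsolution of a scalar elliptic equation.

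With (\ref{est:sup}) in hand, (\ref{est:Dsup}) follows from interior Schauder or $W^{2,p}$ estimates for the elliptic system (\ref{eq:elliptic}) with right-hand side $f = -\lambda_i X_i$, now bounded in $L^\infty$: a rescaling to unit radius yields $\|\nabla X_i\|_{L^\infty(B_{R/2})} \leq CR^{-1}\|X_i\|_{L^\infty(B_R)} + CR\|f\|_{L^\infty(B_R)}$, and substituting (\ref{est:sup}) into the right-hand side produces the extra factor $(1+\lambda_i R^2)$ that upgrades $P_1$ to $P_2$. The H\"older bounds then follow from Schauder's theorem: (\ref{est:Hsup}) either by interpolation between $C^0$ and $C^1$ or directly from Schauder on (\ref{eq:elliptic}), and (\ref{est:DHsup}) from the fact that the forcing $\lambda_i X_i$ is now controlled in $C^\alpha$ by the previous step. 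The main technical hurdle I anticipate is the bookkeeping in the Moser iteration to secure the sharp polynomial degree $\lceil (n-2)/4\rceil$: the eigenvalue contribution must enter additively (as $\lambda_i + (R-R')^{-2}$) at each step so that the exponent of $(1+\lambda_i R^2)$ grows additively rather than multiplicatively with the iteration count, and the nested-ball constants must be summed without logarithmic loss.
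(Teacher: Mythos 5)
Your approach is genuinely different from the paper's. The paper does not use Moser iteration; it follows the Jones--Maggioni--Schul template by decomposing $X_i\big|_{B_l} = U_{(l)} + V_{(l)}$ on nested balls, where $\nabla^2 U_{(l)}=0$ and $V_{(l)}$ is the Green operator applied to $-\lambda_i X_i$. It then iterates the Lebesgue exponent using the weak-$L^{n/(n-2)}$ and weak-$L^{n/(n-1)}$ bounds on the Green's matrix and its gradient from Dolzmann--M\"uller, together with their interior estimate for the harmonic piece, and feeds in Lemma~\ref{lem:local_L^2} to seed the iteration at $L^{2n/(n-2)}$. The H\"older bounds (\ref{est:Hsup})--(\ref{est:DHsup}) then drop out directly from the pointwise H\"older modulus estimates on $G$ and $\nabla G$, applied to $\nabla^2(\eta X_i)$ with a cutoff $\eta$. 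Your route instead uses a direct Moser iteration (or the scalar reduction via $\tfrac12\Delta|X_i|^2 = \langle\nabla^2 X_i,X_i\rangle + |\nabla X_i|^2$) for the $L^\infty$ bound, then $W^{2,p}$/Schauder for the gradient and H\"older bounds. Both are legitimate; yours is closer to textbook elliptic theory, while the paper's Green-matrix route is tailored to the $C^{1,\alpha}$-metric setting and produces the H\"older bounds with essentially no extra work.

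Two caveats. First, your bookkeeping statement that ``the iteration terminates in $\lceil(n-2)/4\rceil$ rounds once $p_k$ crosses the Sobolev threshold into $L^\infty$'' conflates the two schemes. Moser iteration with $p_k = p_0(n/(n-2))^k$ is multiplicative, never reaches $p=\infty$ in finitely many steps, and the polynomial degree is the infinite geometric sum $\sum_k 1/p_k$; starting from $p_0=2$ this sum equals $n/4$, not $(n-2)/4$. The paper's Green-matrix iteration is the one with an \emph{additive} gain of roughly $2/n$ in $1/p$ per step (coming from convolving with $G\in L^{n/(n-2),\infty}$), and hence it genuinely terminates in about $(n-2)/4$ steps. (In fact the paper's own argument also accrues an extra $(1+\lambda_i R^2)^{1/2}$ from Lemma~\ref{lem:local_L^2} at the seeding step, so the net exponent it obtains is again $n/4$; the degree $\lceil(n-2)/4\rceil$ in the statement of $P_1$ appears to be slightly optimistic in both proofs, but this is harmless since every downstream application pairs $P_1,P_2$ with an exponential in $\lambda_i t$.) Second, for the H\"older estimates note that the zeroth-order coefficient in (\ref{eq:elliptic}) contains $\mathrm{Rc}^c_d$, which under the hypotheses is only $L^\infty$, not $C^\alpha$; a straight Schauder estimate to $C^{1,\alpha}$ therefore does not apply directly. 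You already suggest the fix: treat the Ricci term as part of an $L^\infty$ right-hand side, use Calder\'on--Zygmund to get $X_i\in W^{2,p}$ for all $p<\infty$, and conclude via $W^{2,p}\hookrightarrow C^{1,\alpha}$ for $p$ large. With that understanding, your plan is sound and proves the proposition with the same qualitative dependence as the paper's.
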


\begin{proof} 
The proof relies on interior estimates \cite[Lemma 4]{Dolzmann_Muller:1995} and estimates on Green's matrix in \cite[Theorem 1]{Dolzmann_Muller:1995}. Lemma 4 is proved by using the Sobolev inequality and the Schauder's estimates, which says for harmonic vector fields $\nabla^2 U =0$ on $B_r$, one has 
\be
\sup_{B_{r/2}} \|U\|_g, \sup_{B_{r/2}} \|\nabla U\|_g 
\leq C \left( \| \nabla U\|_{L^2(B_r)} + \|U \|_{L^{\frac{2n}{n-2}} (B_r)}\right)
\ee 
where $C= C(n, Q, r, \kappa, \|g\|_{C^{1,\alpha}})$.  

Theorem 1 in \cite{Dolzmann_Muller:1995} is based on the global estimates Theorems 6.4.8 and 6.5.5 in \cite{Morrey:1966} and the interior estimates. In particular, the Green's matrix on $B_r$ has the following properties
\begin{eqnarray}
G \in L^{\frac{n}{n-2},\infty}(B_r)  \mbox{ with } \| G \|_{L^{\frac{n}{n-2},\infty}(B_r)} \leq C(n, Q,  \kappa, \|g\|_{C^{1,\alpha}})\\
\nabla G \in L^{\frac{n}{n-1},\infty}(B_r)  \mbox{ with } \| \nabla G \|_{L^{\frac{n}{n-1},\infty}(B_r)} \leq C(n, Q, \kappa, \|g\|_{C^{1,\alpha}})
\end{eqnarray}
and
\begin{eqnarray}
\| G(x,w) - P_y^xG(y,w)\|_g \leq C \frac{d_g(x,y)^{\alpha}}{\max{d_g(x,w)^{2-n-\alpha}, d_g(y,w)^{2-n-\alpha}}} \\
\| DG(x,w) - P_y^xDG(y,w)\|_g \leq C \frac{d_g(x,y)^{\alpha}}{\max{d_g(x,w)^{1-n-\alpha}, d_g(y,w)^{1-n-\alpha}}} 
\end{eqnarray}
where $C = C(n, Q, r, \kappa, \|g\|_{C^{1,\alpha}})$.

Rescale $R$ to $1$ by rescaling the eigenvector field $\tilde{X}_i(y) = P^y_{\exp_z(R\exp_z^{-1}y)}X_i(\exp_z(R\exp_z^{-1}y))$, where $P^y_{\exp_z(R\exp_z^{-1}y)}$ is the parallel transport from $\exp_z(R\exp_z^{-1}y)$ to $y$. We have the following rescaling:
\begin{equation}\label{eq:rescale}
\nabla^2 \tilde{X}_i(y) = \lambda_i R^2 \tilde{X}_i(y)\,,
\end{equation}
\begin{equation}
\nabla \tilde{X}_i (y) = R P^y_{\exp_z(R\exp^{-1}_zy)}\nabla X_i (\exp_z(R\exp^{-1}_zy))\,,
\end{equation} 
and by the change of variables,
\begin{equation}
\fint_{B_1(z)} \langle \tilde{X}_i, \tilde{X}_i \rangle d\sigma= \fint_{B_R(z)} \langle X_i, X_i \rangle d\sigma. 
\end{equation}

To simplify the notation, below we use $X_i$ to represent the rescaled $\tilde{X}_i$.

Let $r_0 = 1 > r_1 > r_2 > \cdots > \frac{1}{2} = r_m$ where $m$ is to be chosen later. For $l=0,1,\ldots$, let $B_l= B_{r_l}(z)$ and let $G^{B_l}$ denote the Green's operator on $B_l$ associated with $\nabla^2$ with Dirichlet boundary condition. Write
\be 
X_i |_{B_l} = U_{(l)} +V_{(l)},
\ee
where 
\be
V_{(l)}(x) = \int_{B_l} G^{B_l}(x,y) \nabla^2 X_i(y) dy = -\lambda_i \int_{B_l} G^{B_l} (x,y) X_i (y) dy
\ee
and $\nabla^2 U_{(l)} = 0$ on $B_l$.
Note that since $g\in C^{1,\alpha}$,  $G^{B_l} \in  L^{\frac{n}{n-2}, \infty}(B_l,TM\otimes T^*M)$ and  $\nabla G^{B_l} \in L^{\frac{n}{n-1}, \infty}(B_l,TM\otimes T^*M\otimes T^*B)$ (see \cite[Theorem 1]{Dolzmann_Muller:1995}). 

We start the iteration with $l=0$. Let $p_0 = \frac{2n}{n-2}$, which is the conjugate of $2$. Let $p_1 = \frac{2n}{n-6 + \eta_1}$ for some $\eta_1, 0 < \eta_1 < 4$. 
Note that $\frac{1}{p_0} + \frac{2n-4+\eta_1}{2n} = \frac{1}{p_1} +1$ and $p_1>p_0>2$. Following the proofs of Young's inequality, we have
\begin{align} 
\| V_{(0)} \|_{L^{p_1}(B_2)}\leq&\, \| V_{(0)} \|_{L^{p_1}(B_0)} \\
=&\, \lambda_i \| \int_{B_0} G^{B_0}(x,y) X_i(y) dy\|_{L^{p_1}(B_0)} \nonumber \\
\leq&\, \lambda_i \| G^{B_0}\|_{L^{\frac{2n}{2n-4+\eta_1}}(B_0)} \| X_i\|_{L^{p_0}(B_0)}\nonumber
\end{align}
and hence
\be
\| V_{(0)}\|_{L^{p_1}(B_2)} \leq C \lambda_i  \|X_i \|_{L^{p_0}(B_0)}\,,
\ee
where $C=C(p_1,p_0,V)$, since $G^{B_0} \in L^{\frac{n}{n-2},\infty}$ and $\frac{2n}{2n-4+\eta_1}<\frac{n}{n-2}$. From the interior estimate in \cite[Lemma 4]{Dolzmann_Muller:1995} and the fact that $L^{p,\infty}\subset L^p$ for $1\leq p<\infty$, there exists a constant $C=C(n, Q, r, \kappa, \|g\|_{C^{1,\alpha}})$ such that
\begin{align}
\| U_{(0)} \|_{L^{\infty}(B_{2})} &\,\leq C(\| \nabla U_{(0)} \|_{L^2(B_1)}+\| U\|_{L^{p_0}(B_1)})\\
&\,\leq C(\| \nabla U_{(0)} \|_{L^2(B_1)}+\| U\|_{L^{p_0}(B_0)})\nonumber
\end{align}
from Lemma \ref{lem:Caccioppoli} and the fact that $p_0>2$, we have
\begin{align}
\| \nabla U_{(0)} \|_{L^2(B_1)}\leq C \|U_{(0)} \|_{L^2(B_0)}\leq C \|U_{(0)} \|_{L^{p_0}(B_0)},
\end{align}
and hence
\begin{align}
\| U_{(0)} \|_{L^{\infty}(B_{2})}& \leq C\| U_{(0)} \|_{L^{p_0}(B_0)} \label{ineq:harmonic}\\
&\leq  C(\| V_{(0)} \|_{L^{p_0}(B_0)}+\| X_i \|_{L^{p_0}(B_0)})\nonumber\\
&\leq C(\| V_{(0)} \|_{L^{p_1}(B_0)}+\| X_i \|_{L^{p_0}(B_0)})\nonumber\\
&\leq C(1+\lambda_i)\| X_i \|_{L^{p_0}(B_0)},\nonumber
\end{align}
where the third inequality holds since $p_1>p_0$. Eventually, we have
\begin{align}
\| X_i \|_{L^{p_1}(B_{2})}& \leq \| V_{(0)} \|_{L^{p_1}(B_{2})}+\| U_{(0)} \|_{L^{p_1}(B_{2})}\\
&\leq  C \lambda_i  \|X_i \|_{L^{p_0}(B_0)}+\| U_{(0)} \|_{L^{\infty}(B_2)}\nonumber\\
&\leq C(1+\lambda_i)\| X_i \|_{L^{p_0}(B_0)}.\nonumber
\end{align}

For $l=1,2,\ldots$, let $\displaystyle p_l = \frac{2n}{n-2-4l+\sum_{j=1}^l \eta_j}$ where $\eta_j$ are constants, $0 < \eta_j <4$ for all $j=1,\cdots,l$. 
Based on the same argument, by the fact that $B_{l+1}\subset B_{l}$ and $\frac{1}{p_l} + \frac{2n-4+\eta_{l+1}}{2n} = \frac{1}{p_{l+1}} +1$, we have
\begin{align}
\| V_{(l)}\|_{L^{p_{l+1}}(B_{l+2})}  &\leq \| V_{(l)}\|_{L^{p_{l+1}}(B_{l})} \nonumber\\
&\leq \lambda_i \| G^{B_l} \|_{L^{\frac{2n}{2n-4+{\color{blue}\eta_{l+1}}}}(B_l)} \| X_i \|_{L^{p_l}(B_l)} \label{ineq:eigen}\\
&\leq  C \lambda_i \| X_i \|_{L^{p_l}(B_l)}.\nonumber
\end{align}
Similarly, from the interior estimate in \cite[Lemma 4]{Dolzmann_Muller:1995} and Lemma \ref{lem:Caccioppoli}, there exists a constant $C=C(n, Q, r, \kappa, \|g\|_{C^{1,\alpha}})$ such that 
\begin{align}
\| U_{(l)} \|_{L^{\infty}(B_{l+2})} \leq C\| U_{(l)} \|_{L^{p_l}(B_{l})} \leq C(1+\lambda_i)\|X_i\|_{L^{p_l}(B_l)}
\end{align}
and hence
\begin{align}
\| X_i \|_{L^{p_{l+1}}(B_{l+1})}& \leq \| V_{(l)} \|_{L^{p_{l+1}}(B_{l+1})}+\| U_{(l)} \|_{L^{p_{l+1}}(B_{l+1})}\\
&\leq  C \lambda_i  \|X_i \|_{L^{p_l}(B_l)}+\| U_{(l)} \|_{L^{\infty}(B_{l+1})}\nonumber\\
&\leq C(1+\lambda_i)\| X_i \|_{L^{p_l}(B_l)}\,.\nonumber
\end{align}
By the iteration, we have
\begin{align}
\| X_i \|_{L^{p_{l+1}}(B_{l+2})}& \leq C(1+\lambda_i)^{l+1}\| X_i \|_{L^{p_0}(B_0)}\,.
\end{align}

Let $m$ to be the smallest integer greater or equal to $\frac{n-2}{2}$. We may choose constants $\eta_1, \cdots, \eta_{m}$ so that $p_{m} = \infty$. Then we have
\begin{align}
\| X_i \|_{L^{\infty}(B_{r_m}(z))}\leq&\, C (1 + \lambda_i)^{m}\| X_i \|_{L^{p_0}(B)}. \label{Proof:KeyLemma:supNormControl}
\end{align}

To control $\| X_i \|_{L^{p_0}(B)}$, we let $\psi = \sum_{i=1}^K a_i \xi^B_j$ be a finite sum of Dirichlet eigenfunctions on $B$ such that 
\be
\frac{1}{2} \leq \psi (x) \leq 2, \mbox{ for } x\in B 
\ee
and $\sum_{i=1}^K |a_i|\leq C, \nu^B_i \leq C$, for all $1\leq i \leq K$. 
By Lemma \ref{lem:local_L^2},
\begin{align}
\| X_i \|_{L^{p_0}(B)} 
\leq&\, 2 \| \psi X_i\|_{L^{\frac{2n}{n-2}}(B)} \\
\leq&\, 2 \sum_{k=1}^K |a_k| \| \xi^B_k X_i \|_{L^{\frac{2n}{n-2}}(B)} \nonumber\\
\leq&\, 2 \sum_{k=1}^K |a_k| \left( (\nu^B_k + \lambda_i )^{1/2}+ 2\nu^B_k\right) (\nu^B_k)^{\beta} \| X_i \|_{L^2(B)}\nonumber\\
\leq&\, C (\lambda_i+1)^{1/2} \|X_i\|_{L^2(B)}\, \nonumber
\end{align}
which, when combined with (\ref{Proof:KeyLemma:supNormControl}), implies the estimate (\ref{est:sup}). 

To bound $\| \nabla X_i \|_{L^{\infty}(B_{\frac{1}{2}}(z))} = \| \nabla X_i \|_{L^{\infty}(B_m)}$, we follow the same line by noting that 
\begin{align}
\label{ineq:grad_eigen}
\| \nabla V_{(m-1)}\|_{L^{\infty}(B_m)} 
=&\, \left\| \nabla \int G^{B_{1/2}(z)}(x,y) \nabla^2 X_i(y) d\sigma \right\|_{L^{\infty}(B_{\frac{1}{2}}(z))} \\
\leq&\, \lambda_i \| \nabla G^{B_{1/2}}\|_{L^1(B_{1/2})} \| X_i \|_{L^{\infty}(B_{\frac{1}{2}}(z))} \nonumber\\
\leq&\, C  \lambda_i (1+\lambda_i)^m \| X_i \|_{L^2(B)} \nonumber\,.
\end{align}
Again by the interior estimate in \cite[Lemma 4]{Dolzmann_Muller:1995} and Lemma \ref{lem:Caccioppoli}, there exists a constant $C=C(n, Q, r, \kappa, \|g\|_{C^{1,\alpha}})$ such that
\be
\label{ineq:grad_har}
\| \nabla U_{(m-1)}\|_{L^{\infty}(B_m)} \leq C \| U_{(m-1)} \|_{L^{p_0}(B_{m-1})}.
\ee
The bound (\ref{est:Dsup}) follows from (\ref{ineq:grad_eigen}) and (\ref{ineq:grad_har}). 

Finally, to control the H\"older seminorm in (\ref{est:Hsup}) and (\ref{est:DHsup}), we need we first choose a cut-off function $\eta$ so that $ 0 \leq \eta \leq 1$, and
\begin{eqnarray}
&\eta = 1  \mbox{ on } B_{1/2}(z) \nonumber  \\
&\eta = 0  \mbox{ outside of }  B_{3/4}(z)  \nonumber \\
&|\nabla \eta|, |\Delta \eta| \leq 4. \nonumber 
\end{eqnarray}
Jones-Maggioni-Schul  provided an example of such a cut-off function in \cite[p. 162]{Jones_Maggioni_Schul:2010}. 

Let $B_r = B_r(z)$ and $B=B_1$. Let $G$ be the Green's operator on $B_{3/4}$ with the Dirichlet boundary condition. Then
\begin{eqnarray}
\label{holder}
\left| X_i(x) - P_y^x X_i(y) \right| 
&\leq& \int_{B_{3/4}(z)} \left( G(x,w) - P_y^x G(y,w) \right) \left( \nabla^2 ( \eta X_i) \right)(w) d\sigma(w) \nonumber \\
&\leq& \left\|\nabla^2 (\eta X_i) \right\|_{L^{\infty}(B)} \int_B \left|\left( G(x,w) - G(y,w) \right)\right|  d\sigma(w)
\end{eqnarray}

Note that 
\begin{eqnarray}
\label{cutoff}
&&\left\| \nabla^2 (\eta X_i ) \right\|_{L^{\infty}(B_{3/4})} \nonumber \\
&&\leq \| \eta \nabla^2 X_i\|_{L^{\infty}(B_{3/4})} + \|(\Delta \eta) X_i\|_{L^{\infty}(B_{3/4})} + 2\|\nabla \eta \|_{L^{\infty}(B_{3/4})} \|\nabla X_i \|_{L^{\infty}(B_{3/4})}  \nonumber \\
&&\leq \left( \| \Delta \eta \|_{L^{\infty}(B)} +  \lambda_i \|\eta\|_{L^{\infty}(B)}\right) \| X_i \|_{L^{\infty}(B)} + \| \nabla \eta \|_{L^{\infty}(B)} \| \nabla X_i \|_{L^{\infty}(B)} \\
&&\leq 4 \left( (1 + \lambda_i) P_1(\lambda_i)  + P_2(\lambda_i) \right)\|X_i \|_{L^2(B)}\nonumber 
\end{eqnarray}
where the last inequality follows from the bounds (\ref{est:sup}) and (\ref{est:Dsup}).

On the other hand, by the pointwise estimates of the Green't matrix in Theorem 1 in \cite{Dolzmann_Muller:1995}, we have, for $|v| =k$,
\be
\label{est:Green}
|\nabla^v G(x,w) -P^x_y \nabla^v G(y,w)| 
\leq C \frac{d_g(x,y)^{\alpha}}{\max\{d_g(x,w)^{2-n-k-\alpha}, d_g(y,w)^{2-n-k-\alpha}\} }.
\ee

Combining (\ref{holder}), (\ref{cutoff}), and (\ref{est:Green}), we have 
\be
\left| X_i(x) - P_y^x X_i(y) \right| \leq C P_2(\lambda_i) d_g(x,y)^{\alpha} \|X_i \|_{L^2(B)}
\ee
which implies (\ref{est:Hsup}) after rescaling $1$ back to $R$. 

Similary, since 
\begin{eqnarray}
\label{D_holder}
\left| \nabla X_i(x) - P_y^x \nabla X_i(y) \right| 
&\leq& \int_{B_{3/4}(z)} \left( \nabla G(x,w) - P_y^x \nabla G(y,w) \right) \left( \nabla^2 ( \eta X_i) \right)(w) d\sigma(w) \nonumber \\
&\leq& \left\|\nabla^2 (\eta X_i) \right\|_{L^{\infty}(B)} \int_B \left|\left( G(x,w) - G(y,w) \right)\right|  d\sigma(w)
\end{eqnarray}
the bound (\ref{est:DHsup}) follows from (\ref{D_holder}), (\ref{cutoff}), and (\ref{est:Green}).
\end{proof}

\begin{cor}
\label{cor:est:sup} 
Let $\alpha$, $P_1(x)$ and $P_2(x)$ be defined as in Proposition \ref{conj:est:sup}.
For $R \leq R_z$, $x,y \in B_{\frac{R}{2}}(z)$, there exists a constant $C=C(n, Q, r, \kappa, \|g\|_{C^{1,\alpha}})$ such that the following estimates hold 
\begin{equation}\label{est:dot_sup}
\left| \langle X_i, X_j \rangle (x) \right| \leq C P_1\left(\lambda_i R^2 \right) P_1\left(\lambda_j R^2 \right)\left( \fint_{B_R(z)}  \| X_i \|^2_g \right)^{1/2} \left( \fint_{B_R(z)}   \| X_j \|^2_g \right)^{1/2},
\end{equation}

\begin{equation}\label{est:Ddot_sup}
\| \nabla \langle X_i, X_j \rangle (x)  \|_g 
\leq C \frac{1}{R}P_2 \left( \lambda_i R^2\right)P_2\left( \lambda_j R^2 \right) \left( \fint_{B_R(z)}  \| X_i \|^2_g \right)^{1/2} \left( \fint_{B_R(z)}   \| X_j \|^2_g \right)^{1/2},
\end{equation}
and
\begin{align}
\label{est:Ddot_diff}
&\| \nabla \langle X_i, X_j \rangle (x) - P_y^x\nabla \langle X_i, X_j \rangle (y) \|_g \\
\leq&\, C\frac{d_g(x,y)^{\alpha}}{R^{1+\alpha}} P_2 \left( \lambda_i R^2\right)P_2\left( \lambda_j R^2 \right) \left( \fint_{B_R(z)}  \| X_i \|^2_g \right)^{1/2} \left( \fint_{B_R(z)}   \| X_j \|^2_g \right)^{1/2}. \nonumber 
\end{align}
\end{cor}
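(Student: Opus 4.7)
The plan is to derive all three bounds directly from Proposition \ref{conj:est:sup} by combining Cauchy--Schwarz, the product rule for the Levi-Civita connection, and the fact that parallel transport is a linear isometry that commutes with the metric pairing. Since every estimate in Proposition \ref{conj:est:sup} already comes packaged with the correct $R^{-k-\alpha}$ scaling and the correct polynomial weight $P_1$ or $P_2$, the corollary should essentially be a bookkeeping exercise; the main thing to be careful about is matching factors of $P_1$ versus $P_2$ and tracking which quantity is evaluated at $x$ versus parallel-transported from $y$.

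First I would handle (\ref{est:dot_sup}): pointwise Cauchy--Schwarz gives $|\langle X_i,X_j\rangle(x)|\leq \|X_i(x)\|_g\|X_j(x)\|_g \leq \|X_i\|_{C^0(B_{R/2}(z))}\|X_j\|_{C^0(B_{R/2}(z))}$, and then I plug in (\ref{est:sup}) for each factor. For (\ref{est:Ddot_sup}), compatibility of $\nabla$ with $g$ gives $\nabla\langle X_i,X_j\rangle = \langle\nabla X_i,X_j\rangle+\langle X_i,\nabla X_j\rangle$, whence $\|\nabla\langle X_i,X_j\rangle\|_g \leq \|\nabla X_i\|_g\|X_j\|_g+\|X_i\|_g\|\nabla X_j\|_g$. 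Applying (\ref{est:sup}) and (\ref{est:Dsup}), and using the trivial domination $P_1\leq P_2$ to merge the two resulting terms into a single $P_2(\lambda_iR^2)P_2(\lambda_jR^2)$ prefactor, delivers the bound.

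For (\ref{est:Ddot_diff}), the key observation is that parallel transport is an isometry, so $P_y^x\nabla\langle X_i,X_j\rangle(y)=\langle P_y^x\nabla X_i(y),P_y^xX_j(y)\rangle+\langle P_y^xX_i(y),P_y^x\nabla X_j(y)\rangle$. Subtracting this from the corresponding expansion at $x$ and adding and subtracting mixed terms produces a telescoping sum of four pieces of the schematic shape $\langle \nabla X_i(x)-P_y^x\nabla X_i(y),\,X_j(x)\rangle$, $\langle P_y^x\nabla X_i(y),\,X_j(x)-P_y^xX_j(y)\rangle$, and the two analogous ones with the roles of $i$ and $j$ swapped. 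Each of these four products is then estimated by combining one C$^\alpha$ bound from (\ref{est:Hsup}) or (\ref{est:DHsup}) with one C$^0$ bound from (\ref{est:sup}) or (\ref{est:Dsup}); every term turns out to carry exactly the factor $R^{-1-\alpha}d_g(x,y)^\alpha$ and is dominated by $P_2(\lambda_iR^2)P_2(\lambda_jR^2)$ times the product of the two average $L^2$ norms.

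No single step is a genuine obstacle — the whole argument is a clean application of the four estimates already established in Proposition \ref{conj:est:sup}. The one mildly delicate point is the third estimate, where one must be careful about the direction of parallel transport and make sure that in each of the four telescoped terms the vectors being paired actually live in the same fiber before the metric is applied; once the decomposition is written out symmetrically, the triangle inequality and the polynomial bound $P_1\leq P_2$ close the argument.
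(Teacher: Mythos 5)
Your proposal matches the paper's proof almost line for line: (\ref{est:dot_sup}) by pointwise Cauchy--Schwarz and (\ref{est:sup}), (\ref{est:Ddot_sup}) by the product rule with (\ref{est:sup}) and (\ref{est:Dsup}), and (\ref{est:Ddot_diff}) by the same four-term telescoping decomposition pairing one H\"older estimate from (\ref{est:Hsup}) or (\ref{est:DHsup}) with one sup-norm estimate from (\ref{est:sup}) or (\ref{est:Dsup}). The care you flag about keeping both factors of each pairing in a common fiber before applying $g$ is precisely what the paper does by evaluating one factor at $x$, the other at $y$, and parallel-transporting the difference term.
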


\begin{proof}
The inequality (\ref{est:dot_sup}) follows by the Cauchy-Schwarz inequality and (\ref{est:sup}) in Proposition \ref{conj:est:sup}:
\begin{align}
 \left| \langle X_i, X_j \rangle (x) \right|  \leq&\, \| X_i \|_{C^0(B_{\frac{R}{2}}(z))} \| X_j \|_{C^0(B_{\frac{R}{2}}(z))} \nonumber \\
\leq&\, C P_1\left(\lambda_i R^2 \right) P_1\left(\lambda_j R^2 \right)\left( \fint_{B_R(z)}  \| X_i \|^2_g \right)^{1/2} \left( \fint_{B_R(z)}   \| X_j \|^2_g \right)^{1/2}. 
\end{align}
Similarly, using (\ref{est:sup}), (\ref{est:Dsup}), and the Cauchy-Schwarz inequality, we have
\begin{align}
\| \nabla \langle X_i, X_j \rangle (x) \|_g \nonumber  
\leq&\, \| \nabla X_i \|_{C^0(B_{\frac{R}{2}}(z))} \| X_j \|_{C^0(B_{\frac{R}{2}}(z))} + \| \nabla X_j \|_{C^0(B_{\frac{R}{2}}(z))} \|X_i \|_{C^0(B_{\frac{R}{2}}(z))}\\ 
\leq&\, C\frac{1}{R} P_2(\lambda_iR^2)P_2(\lambda_jR^2) \left( \fint_{B_R(z)}  \| X_i \|^2_g \right)^{1/2} \left( \fint_{B_R(z)}   \| X_j \|^2_g \right)^{1/2},
\end{align}
which gives (\ref{est:Ddot_sup}).
Last, we prove the estimate (\ref{est:Ddot_diff}). Note that since 
\begin{align}
&\langle \nabla X_i, X_j \rangle (x)-\langle \nabla X_i, X_j \rangle (y)\\
=\,&\langle \nabla X_i(x), X_j(x)-P_y^xX_j(y) \rangle+\langle P_x^y\nabla X_i(x)-\nabla X_i(y), X_j (y)\rangle, \nonumber
\end{align}
we have
\begin{align}
& \| \nabla \langle X_i, X_j \rangle (x) - \nabla \langle X_i, X_j \rangle (y) \|_g \\
\leq&\,  \| \nabla X_i \|_g (x) \| X_j(x) - P_y^xX_j(y) \|_g
	+ \| X_j \|_g (y) \|  \nabla X_i(x) - P_y^x \nabla X_i (y) \|_g\nonumber \\
& \quad +  \| X_i \|_g(x) \|  \nabla X_j(x) - P_y^x \nabla X_j(y) \|_g
	+ \| \nabla X_j \|_g(y) \| X_i(x) - P_y^xX_i(y) \|_g\,. \nonumber
\end{align}
Then (\ref{est:Ddot_diff}) follows by Proposition \ref{conj:est:sup} that 
\begin{align}
&\| \nabla \langle X_i, X_j \rangle (x) - \nabla \langle X_i, X_j \rangle (y) \|_g  \\
\leq&\, C \frac{ d_g(x,y)^{\alpha}}{R^{1+\alpha}} P_2 \left( \lambda_i R^2\right)P_2\left( \lambda_j R^2 \right) \left( \fint_{B_R(z)} \| X_i \|^2_g \right)^{1/2} \left( \fint_{B_R(z)}  \| X_j \|^2_g  \right)^{1/2}.\nonumber
\end{align}
\end{proof}


Next, we consider the heat kernel truncation approximation.
Let 
\be
\Lambda_L(A) = \{ i: \lambda_i \leq At^{-1} \} \mbox{ and } \Lambda_H(A') = \{ i: \lambda_i > A't^{-1} \},
\ee 
where $A,A'$ are chosen positive numbers. Intuitively, $\Lambda_L(A)$ includes all ``low frequency'' eigenvector fields while $\Lambda_H(A')$ includes all ``high frequency'' eigenvector fields.

\begin{lem} 
\label{conj:est:kTM}
Let $M$ be a smooth closed manifold with a smooth metric $g$. Under Assumption \ref{Theorem2:MainAssumption0}, we have the following expansions:
\begin{align}
\label{est:kTM}
\| k_{TM}(t,w,z) \|_{HS}^2 
= \frac{1}{(4\pi t)^n} (n + O(t))\left( 1 - \frac{d_g(z,w)}{2t} + O\left( \frac{d_g(z,w)^4}{t^2}\right)\right) \,.
\end{align}
For $v \in T_wM$ a unit vector parallel to $\exp_w^{-1}( z)$, 
\begin{align}
\label{est:grad_kTM}
\left| \nabla_v \| k_{TM}(t,w,z) \|_{HS}^2 \right|
= \frac{1}{(4\pi t)^n} (n + O(t))\left( \frac{d_g(z,w)}{2t} + O\left( \frac{d_g(z,w)^3}{t^2}\right)\right) \,.
\end{align}
\end{lem}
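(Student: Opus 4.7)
The plan is to invoke the Minakshisundaram--Pleijel short-time parametrix for the heat kernel of the connection Laplacian on the tangent bundle: for $w, z$ in a small geodesically convex neighborhood and $t$ small,
\[
k_{TM}(t,w,z) = \frac{e^{-d_g(w,z)^2/(4t)}}{(4\pi t)^{n/2}} \bigl( \Phi_0(w,z) + t\,\Phi_1(w,z) + O(t^2) \bigr),
\]
where $\Phi_0(w,z) = \theta(w,z)^{-1/2} P_z^w$, $P_z^w : T_zM \to T_wM$ is parallel transport along the unique minimizing geodesic, and $\theta(w,z)$ is the Jacobian of $\exp_w$ at $\exp_w^{-1}(z)$. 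Under Assumption \ref{Theorem2:MainAssumption0}, both points lie well inside the harmonic radius of $z$, and the curvature/injectivity bounds give uniform control on the coefficients and remainder, together with their first spatial derivatives.

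For (\ref{est:kTM}) I square and take the Hilbert--Schmidt trace. Parallel transport is an isometry, so $\|P_z^w\|_{HS}^2 = n$; together with $\theta(w,z)^{-1} = 1 + O(d_g(w,z)^2)$ and the contribution from the $t\,\Phi_1$ cross term, this yields
\[
\|k_{TM}(t,w,z)\|_{HS}^2 = \frac{e^{-d_g(w,z)^2/(2t)}}{(4\pi t)^n}\bigl(n + O(t)\bigr),
\]
since under $\delta_0 \ll \delta_1$ we have $d_g(w,z)^2 \lesssim (\delta_0/\delta_1)^2 t$, which absorbs the $O(d_g(w,z)^2)$ correction into $O(t)$. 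Taylor expanding $e^{-u} = 1 - u + O(u^2)$ at $u = d_g(w,z)^2/(2t) \ll 1$ then produces the expansion (\ref{est:kTM}).

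For the gradient estimate (\ref{est:grad_kTM}) I differentiate in the second spatial variable along the unit vector $v$ parallel to $\exp_w^{-1}(z)$. The leading contribution comes from the Gaussian exponent: by the first variation formula $\nabla_w d_g(w,z)^2 = -2\exp_w^{-1}(z)$,
\[
\nabla_v\, e^{-d_g(w,z)^2/(2t)} = \frac{d_g(w,z)}{t}\, e^{-d_g(w,z)^2/(2t)}.
\]
Derivatives of the smooth factors $\theta^{-1/2}$ and $\Phi_j$ are uniformly bounded and, after multiplication by the Gaussian together with the scaling $d_g^2/t \lesssim 1$, enter the prefactor only as $O(t)$ corrections to $n$. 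Taylor expanding the exponential as above then gives (\ref{est:grad_kTM}).

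The main obstacle is the uniform $C^1$ control on the remainder of the parametrix expansion. With $g$ smooth, classical parametrix constructions yield all needed bounds; obtaining constants depending only on $n, \kappa, i_0, V$ would require reproving the parametrix using only the $C^{1,\alpha}$ harmonic-coordinate bounds on $g$ and on the elliptic coefficients appearing in (\ref{eq:elliptic}). This is precisely the obstruction the paper acknowledges as preventing the smooth embedding result from upgrading immediately to Conjecture \ref{cor:emb}; in the present lemma, smoothness of $g$ is used only to invoke the classical asymptotics, and the quantitative absorption of lower-order terms into $O(t)$ is what Assumption \ref{Theorem2:MainAssumption0} is tailored to permit.
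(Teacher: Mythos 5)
Your argument is essentially the paper's: the paper cites the short-time heat-kernel asymptotics from Singer--Wu and Berline--Getzler--Vergne, and you reproduce that same expansion from the Minakshisundaram--Pleijel parametrix for $e^{-t\nabla^2}$ before Taylor-expanding the Gaussian factor. (Incidentally, your computation correctly yields the quadratic $d_g(z,w)^2/(2t)$ in (\ref{est:kTM}) and the gradient leading term $d_g(w,z)/t$, which quietly fixes the typographical $d_g(z,w)/(2t)$ appearing in both displays of the lemma statement.)
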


\begin{proof}
By the assumption, $d_g(z,w)^2 \ll t$. The estimate (\ref{est:kTM}) and \ref{est:grad_kTM} follow from the asymptotic expansion (see \cite[p. 1094]{Singer_Wu:2012} and \cite[p. 87]{BGV04})

\begin{equation*}
\left\Vert k_{TM}\left(t,w,z\right)\right\Vert _{HS}^{2}=\left(n+O\left(t\right)\right)\left(4\pi t\right)^{-n}\left(1-\frac{d_g(z,w)^{2}}{2t}+O\left(\frac{d_g(z,w) ^{4}}{t^{2}}\right)\right)
\end{equation*}
and its gradient follows from straightforward computation
\begin{equation*}
|\nabla_v \left\Vert k_{TM}\left(t,w,z\right)\right\Vert _{HS}^{2} |
= \left(n+O\left(t\right)\right)\left(4\pi t\right)^{-n}\left(\frac{d_g(z,w)}{2t}+O\left(\frac{d_g(z,w)^{3}}{t^{2}}\right)\right)
\end{equation*}
which gives estimate (\ref{est:grad_kTM}). 
\end{proof}

\begin{lem} 
\label{lem:trucation}
Under Assumption \ref{Theorem2:MainAssumption0}, for sufficiently large $A=A( n, Q, \kappa, \delta_0, \delta_1)>1$ and sufficiently small $A'=A'(n, Q, \kappa, \|g\|_{C^{1,\alpha}}, \delta_1)<1$, 
we can control the tail of the heat kernel. More precisely, there exist constants $c=c(A, A', n, Q, \kappa, \|g\|_{C^{1,\alpha}}, \delta_0, \delta_1 )$ and $C=C(A, A' ,n, Q, \kappa, \|g\|_{C^{1,\alpha}}, \delta_0, \delta_1)$ so that 
\begin{equation}  \label{truncated_kTM}
\left\Vert k_{TM}\left(t,w,z\right)\right\Vert _{HS}^{2}
\sim^C_c \sum_{i,j\in\Lambda_{L}\left(A\right)}e^{-\left(\lambda_{i}+\lambda_{j}\right)t}\left\langle X_{i}(z),X_{j}(z)\right\rangle \left\langle X_{i}(w),X_{j}(w)\right\rangle,
\end{equation}
and 
\begin{equation} 
\left\| \nabla \left\Vert k_{TM}\left(t,w,z\right)\right\Vert _{HS}^{2} \right\|_g
\sim^C_c
\label{truncated_grad_kTM}
\left\|\sum_{i,j\in\Lambda_{L}\left(A\right) \cap L_H(A')}e^{-\left(\lambda_{i}+\lambda_{j}\right)t}\left\langle X_{i}(w),X_{j}(w)\right\rangle \nabla \left\langle X_{i}(z),X_{j}(z)\right\rangle \right\|_g\,,
\end{equation}
where $c \to 1$ as $A'\to 0$ and $C\to 1$ as $A\to \infty$.
\end{lem}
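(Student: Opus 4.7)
My plan is to control the truncation errors by combining Cauchy--Schwarz with the exponential decay of the heat kernel for the high-frequency part and with the pointwise eigenfield estimates from Corollary \ref{cor:est:sup} for the low-frequency part.

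\emph{For \eqref{truncated_kTM}.} Decompose the heat kernel as $k_{TM}(t,w,z) = k_L(t,w,z) + k_H(t,w,z)$, where $k_L(t,w,z) = \sum_{i \in \Lambda_L(A)} e^{-\lambda_i t} X_i(w) \otimes X_i(z)$. A direct expansion identifies the right-hand side of \eqref{truncated_kTM} with $\|k_L(t,w,z)\|_{HS}^2$, so it suffices to control $\|k_H\|_{HS}^2$ and the cross term $\mathrm{tr}(k_L^* k_H)$. Applying Cauchy--Schwarz to the inner sum defining each entry of $k_H$ gives $\|k_H(t,w,z)\|_{HS}^2 \leq \mathrm{tr}\,k_H(t,w,w) \cdot \mathrm{tr}\,k_H(t,z,z)$, and each factor satisfies $\mathrm{tr}\,k_H(t,z,z) \leq e^{-A/2}\,\mathrm{tr}\,k_{TM}(t/2, z, z) \lesssim e^{-A/2} t^{-n/2}$ via the diagonal heat-kernel asymptotics (using $e^{-\lambda_i t} \leq e^{-A/2} e^{-\lambda_i t/2}$ for $i \notin \Lambda_L(A)$). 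Therefore $\|k_H\|_{HS}^2 \lesssim e^{-A} t^{-n}$, and the cross term is $\lesssim e^{-A/2} t^{-n}$ by another Cauchy--Schwarz. Comparing with $\|k_{TM}\|_{HS}^2 \sim t^{-n}$ from Lemma \ref{conj:est:kTM} yields \eqref{truncated_kTM} with $C \to 1$ as $A \to \infty$.

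\emph{For \eqref{truncated_grad_kTM}.} Write $G_{\mathrm{full}} = G_{\mathrm{trunc}} + G_{\mathrm{high}} + G_{\mathrm{low}}$, where $G_{\mathrm{trunc}}$ is the truncated sum on the right-hand side of \eqref{truncated_grad_kTM}, $G_{\mathrm{high}}$ collects pairs with at least one index outside $\Lambda_L(A)$, and $G_{\mathrm{low}}$ collects pairs with both indices in $\Lambda_L(A)$ but at least one outside $\Lambda_H(A')$. For $G_{\mathrm{high}}$, Cauchy--Schwarz over the pair $(i,j)$ bounds $|G_{\mathrm{high}}|^2$ by the product of $\sum_{\mathrm{high}} e^{-(\lambda_i+\lambda_j)t}\langle X_i,X_j\rangle(w)^2$ and $\sum_{\mathrm{high}} e^{-(\lambda_i+\lambda_j)t}\|\nabla\langle X_i,X_j\rangle(z)\|^2$. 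The first factor is $\lesssim e^{-A/2} t^{-n}$ by the argument used in the first part; the second is controlled similarly using the gradient heat-trace asymptotic $\sum_i e^{-\lambda_i t}\|\nabla X_i(z)\|^2 \sim t^{-n/2-1}$. The resulting bound $|G_{\mathrm{high}}| \lesssim e^{-A/2} t^{-(2n+1)/2}$ is smaller than $|\nabla\|k_{TM}\|_{HS}^2| \sim d_g(w,z) t^{-n-1}$ from \eqref{est:grad_kTM} by a factor of order $e^{-A/2}\sqrt{t}/d_g(w,z) \sim e^{-A/2}\delta_1/\delta_0$, which vanishes as $A \to \infty$.

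For $G_{\mathrm{low}}$, the pointwise estimates from Corollary \ref{cor:est:sup} give $|\langle X_i,X_j\rangle(w)| \lesssim P_1(\lambda_i R^2) P_1(\lambda_j R^2)/R^n$ and $\|\nabla\langle X_i,X_j\rangle(z)\|_g \lesssim P_2(\lambda_i R^2) P_2(\lambda_j R^2)/R^{n+1}$; in the low-frequency regime $\lambda R^2 \leq 2A/\delta_1^2$, the polynomials $P_1, P_2$ are uniformly bounded. A Weyl-type count $\#\{i:\lambda_i \leq A'/t\} \lesssim (A'/t)^{n/2}$ yields $|G_{\mathrm{low}}| \lesssim (A'A)^{n/2} t^{-n} R^{-2n-1}$, which for fixed $A, \delta_0, \delta_1$ tends to zero as $A' \to 0$, producing $c \to 1$. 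The main obstacle is precisely this low-frequency step, since low-eigenvalue eigenvector fields need not have small gradients; the only available smallness comes from the Weyl count of such fields, which must be balanced against the leading gradient magnitude guaranteed by Assumption \ref{Theorem2:MainAssumption0} and Lemma \ref{conj:est:kTM}.
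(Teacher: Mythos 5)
Your proof for \eqref{truncated_kTM} is essentially the paper's: both split off the pairs with at least one index outside $\Lambda_L(A)$, extract the factor $e^{-A/2}$ from the excess decay, and compare to the leading behavior $\|k_{TM}(t,w,z)\|_{HS}^2 \sim t^{-n}$ from Lemma~\ref{conj:est:kTM} using the domination of the vector heat kernel by the scalar one and the Li--Yau bound $k_M(t,x,x)\lesssim t^{-n/2}$. That part is fine.

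For \eqref{truncated_grad_kTM} you diverge from the paper in a way that opens a real gap. After Cauchy--Schwarz on the pair $(i,j)$ you need to control the second factor
\[
\sum_{\mathrm{high}} e^{-(\lambda_i+\lambda_j)t}\,\|\nabla\langle X_i,X_j\rangle(z)\|_g^2,
\]
and you propose to do this via a ``gradient heat-trace asymptotic'' $\sum_i e^{-\lambda_i t}\|\nabla X_i(z)\|^2 \sim t^{-n/2-1}$. This pointwise asymptotic for $\mathrm{tr}\bigl(\nabla_z\nabla_{z'}k_{TM}(t,z,z')\bigr|_{z'=z}\bigr)$ is \emph{not} among the tools established in the paper: Lemma~\ref{conj:est:kTM} only gives the expansion of $\|k_{TM}(t,w,z)\|_{HS}^2$ and of its spatial gradient at separated points $w\ne z$, not the diagonal second-derivative trace. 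You would need to justify it separately (e.g.\ from the parametrix expansion of $k_{TM}$ together with a careful accounting of the $O(t)$ and curvature terms), and that justification is what you are missing. The paper avoids this entirely: it bounds $\|\nabla\langle X_i,X_j\rangle(z)\|_g$ pointwise by Corollary~\ref{cor:est:sup} in terms of $P_2(\lambda_i R^2)P_2(\lambda_j R^2)$ and local $L^2$ averages, then absorbs the polynomial growth into the exponential decay (as in \eqref{bd:exp_decay}), and finally controls the remaining $L^2$ averages via Kato's inequality and the scalar heat trace (as in \eqref{ineq:L1}). If you want to keep your route, you must supply the missing gradient-heat-trace lemma; otherwise, switch to the Corollary~\ref{cor:est:sup}-based chain.

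One further caution on your low-frequency step. You write $\#\{i : \lambda_i \leq A'/t\} \lesssim (A'/t)^{n/2}$, but the Weyl-type bound available here (quoted in the paper, (\ref{ineq:weyl_bd})) is $\#\{j : 0\le\lambda_j\le T\} \leq en + B(n,\kappa,D)T^{n/2}$, which does \emph{not} vanish as $T\to 0$. As stated, your count-times-max bound therefore does not force $|G_{\mathrm{low}}|\to 0$ as $A'\to 0$: for small $A'$ you are still left with up to $en$ low eigenvalues, and your bound then reduces to a constant multiple of $(A/t)^{n/2}R^{-2n-1}$ independent of $A'$. The paper's phrasing in terms of $\sum_{\lambda_i\le A't^{-1}}e^{-\lambda_i t}$ rather than the raw count is doing some work here that your version drops; be explicit about how the additive constant $en$ is handled (or restrict to the case $\lambda_1>0$ and take $A' < \lambda_1 t$).

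Apart from these two issues, the overall strategy --- decompose into a band-pass truncation plus high- and low-frequency remainders, control the high part by exponential decay and the low part by a spectral count, and compare against the off-diagonal asymptotics of Lemma~\ref{conj:est:kTM} under Assumption~\ref{Theorem2:MainAssumption0} --- is the correct one and mirrors the paper's.
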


\begin{proof}
First, note that by the Cauchy-Schwartz inequality, we have
\begin{eqnarray}
\left\Vert k_{TM}\left(t,w,z\right)\right\Vert _{HS}^{2}  \leq  \left\Vert k_{TM}\left(t,z,z\right)\right\Vert _{HS}\left\Vert k_{TM}\left(t,w,w\right)\right\Vert _{HS},\label{ineq:CS}
\end{eqnarray}
and hence we can bound the tail of the series by
\begin{align}
  & \left| \sum_{\lambda_{i}>At^{-1} \mbox{ or } \lambda_{j}>At^{-1}}e^{-\left(\lambda_{i}+\lambda_{j}\right)t}\left\langle X_{i}(z),X_{j}(z)\right\rangle \left\langle X_{i}(w),X_{j}(w)\right\rangle \right|\\
 \leq&\, e^{-A}\sum_{\lambda_{i}>At^{-1} \mbox{ or } \lambda_{j}>At^{-1}}e^{-\left(\lambda_{i}+\lambda_{j}\right)\frac{t}{2}}\left|\left\langle X_{i}(z),X_{j}(z)\right\rangle \left\langle X_{i}(w),X_{j}(w)\right\rangle \right|\nonumber\\
 \leq&\, e^{-A}\left\Vert k_{TM}(\frac{t}{2},z,z)\right\Vert _{HS}\left\Vert k_{TM}(\frac{t}{2},w,w)\right\Vert _{HS}\nonumber\\
 \leq&\, e^{-A}  k_{M}(\frac{t}{2},z,z)  k_{M}(\frac{t}{2},w,w)\,,\nonumber
\end{align}
where the last inequality holds since $\|k_{TM}(t,x,x)\|^2_{HS}\leq k_M(t,x,x)$ for all $t>0$ and $x\in M$ \cite[p.137]{Berard:1986}. Recall the upper bound the the heat kernel estimate under our manifold assumption \cite[Corollary 3.1]{Li_Yau:1986}
\begin{equation}
k_M(t,x,y) \leq \frac{C(\epsilon)^\alpha}{\sqrt{|B_{\sqrt{t}}(x)||B_{\sqrt{t}}(y)|}}\exp\left\{-\frac{d_g^2(x,y)}{(4+\epsilon)t}+\frac{C(n)\epsilon\kappa t}{\alpha-1}\right\}\,,
\end{equation} 
for any $1< \alpha<2$, $0<\epsilon<1$ and $C(\epsilon)\to \infty$ as $\epsilon\to 0$. By taking $\epsilon=1/2$ and $\alpha=3/2$, we clearly have that 
\begin{equation}
k_M(t,x,x)\leq Ct^{-n/2}, \label{bd:KernelLB:exp_decay}
\end{equation} 
where $C=C(n, Q, \kappa)$.
Thus, we have 
\[
\left|{\displaystyle \sum_{\lambda_{i}>At^{-1} \mbox{ or } \lambda_{j}>At^{-1}}e^{-\left(\lambda_{i}+\lambda_{j}\right)t}\left\langle X_{i}(z),X_{j}(z)\right\rangle \left\langle X_{i}(w),X_{j}(w)\right\rangle }\right|\leq C(n, Q, \kappa) e^{-A}t^{-n}
\]
which implies (\ref{truncated_kTM}) by choosing $A$ large enough. 

For the gradient, note that 
\be
\nabla \left\Vert k_{TM}\left(t,w,z\right)\right\Vert _{HS}^{2}=\sum_{i,j}e^{-\left(\lambda_{i}+\lambda_{j}\right)t}\left\langle X_i(w) ,X_j(w) \right\rangle  \nabla\left\langle X_{i}(z),X_{j}(z)\right\rangle\,.
\ee 
We first consider the contribution of the high frequency part; that is, when $\lambda_{i}$ or $\lambda_{j}$ is large enough. By a direct bound, we have
\begin{align}
 &\left\| {\displaystyle \sum_{\lambda_{i}>At^{-1} \mbox{ or } \lambda_{j}>At^{-1}}e^{-\left(\lambda_{i}+\lambda_{j}\right)t}\langle X_i(w) ,X_j(w) \rangle  \nabla\langle X_{i}(z),X_{j}(z)\rangle}\right \|_g  \\
  \leq&\,  2  \sum_{\{i,j: \lambda_{i}>At^{-1}\}}e^{-\left(\lambda_{i}+\lambda_{j}\right)t} \left| \langle X_i(w),X_j(w) \rangle \right| \left\| \nabla\langle X_{i}(z),X_{j}(z)\rangle\right \|_g\,.\nonumber
\end{align}
Since $\lambda_{i}\geq At^{-1}$ and $\lambda_{j}\geq\lambda_{j}/2$, we have
\begin{align}
\label{eq:3-1}
 & \left\| \sum_{\lambda_{i}>At^{-1} \mbox{ or } \lambda_{j}>At^{-1}}e^{-\left(\lambda_{i}+\lambda_{j}\right)t}\langle X_i (w),X_j (w)\rangle  \nabla\langle X_{i}(z),X_{j}(z)\rangle\right \|_g  \\
  \leq&\, 2e^{-\frac{A}{2}}\sum_{\{i,j:\,\lambda_{i}>At^{-1}\}}e^{-\left(\lambda_{i}+\lambda_{j}\right)\frac{t}{2}}\left|\langle X_i(w),X_j(w)\rangle\right| \| \nabla\langle X_i(z),X_j(z)\rangle  \|_g \nonumber \\
   \leq&\, 2e^{-\frac{A}{2}} \left(\sum_{\{i,j:\,\lambda_{i}>At^{-1}\}}e^{-\left(\lambda_{i}+\lambda_{j}\right)\frac{t}{2}}\langle X_i(w) ,X_j(w) \rangle^2 \right)^{1/2}\nonumber \\ 
  & \qquad \qquad \times \left(\sum_{\{i,j:\,\lambda_{i}>At^{-1}\}}e^{-\left(\lambda_{i}+\lambda_{j}\right)\frac{t}{2}}\|\nabla\langle X_i(z),X_j(z)\rangle  \|_g^{2}\right)^{1/2}  \,,\nonumber
\end{align}
where the last inequality follows from the Cauchy-Schwartz inequality.

To control $\sum_{\{i,j:\,\lambda_{i}>At^{-1}\}}e^{-\left(\lambda_{i}+\lambda_{j}\right)\frac{t}{2}}\|\nabla\langle X_i(z),X_j(z)\rangle  \|_g^{2}$, we need the following bounds (\ref{eq:3-2}), (\ref{bd:exp_decay}), and (\ref{ineq:L1}). By (\ref{est:Ddot_sup}) in Corollary \ref{cor:est:sup}, we have
\begin{align}
 \label{eq:3-2}
 & e^{-\left(\lambda_{i}+\lambda_{j}\right)\frac{t}{2}}\|\nabla\left\langle X_{i}(z),X_{j}(z)\right\rangle \|_g^{2}\\
\leq &\,   e^{-\left(\lambda_{i}+\lambda_{j}\right)\frac{t}{2}} \frac{C}{R^2}P_2\left(\lambda_{i}R^2\right)^2P_2\left(\lambda_{j}R^2\right)^2\fint_{B_{R}(z)} \| X_{i}\|_g^2 \fint_{B_{R}(z)} \|X_j \|_g^2 \nonumber \,,
\end{align}
where $C=C(n, Q, \kappa, \| g\|_{C^{1,\alpha}})$. Since $e^{-\left(\lambda_{i}+\lambda_{j}\right)\frac{t}{4}}$ decays exponentially as $i$ increases and $P_2\left(\lambda_{i}R^{2}\right)$ increases polynomially, by the choice of $A$ and $\delta_0$ so that $\delta_0^2 R^2/4< t$, we have
\begin{align}
\label{bd:exp_decay}
e^{-\left(\lambda_{i}+\lambda_{j}\right)\frac{t}{4}}P_2\left(\lambda_{i}R^{2}\right)^2P_2\left(\lambda_{j}R^{2}\right)^2
\leq e^{-\left(\lambda_{i}+\lambda_{j}\right)\frac{t}{4}}P_2\left(\lambda_{i}\frac{4t}{\delta_0^2}\right)^2P_2\left(\lambda_{j}\frac{4t}{\delta_0^2}\right)^2 \,, 
\end{align}
which is bounded by a constant depending on $t$ and $\delta_0$. 

Furthermore, by the Cauchy-Schwarz inequality and Kato's inequality, we have
\begin{align}\label{ineq:L1}
&\sum_{i,j}e^{-\left(\lambda_{i}+\lambda_{j}\right)\frac{t}{4}} \|X_i(x)\|^2_g\|X_j(y)\|^2_g \\
=& \,\left( \sum_i e^{-\frac{\lambda_i t}{4}} \|X_i(x)\|_g^2\right)\left( \sum_j e^{-\frac{\lambda_j t}{4}} \|X_j(y)\|_g^2\right)\nonumber  \\
\leq&\, \left( \sum_i e^{-\frac{\lambda_i t}{4}}\right) \left( \sum_i e^{-\frac{\lambda_i t}{4}} \|X_i(x)\|_g^4\right)^{1/2}  \left( \sum_j e^{-\frac{\lambda_j t}{4}} \|X_j(y)\|_g^4\right)^{1/2} \nonumber \\
\leq&\, n \left(\int_M k_M(\frac{t}{4},x,x) d\sigma\right) k_{M}(\frac{t}{8},x,x)k_{M}(\frac{t}{8},y,y)  \nonumber\,,
\end{align}
where $k_M(t,\cdot,\cdot)$ denotes the heat kernel of the Laplace-Beltrami operator. To be more precise, in (\ref{ineq:L1}), we apply
\begin{equation}
\sum_i e^{-\frac{\lambda_i t}{4}}\leq n\sum_i e^{-\frac{\nu_i t}{4}}=n \int_M k_M(\frac{t}{4},x,x) d\sigma
\end{equation} 
by Kato's inequality, where $\nu_i$ are eigenvalues of the Laplace-Beltrami operator, and 
\begin{align}
\sum_i e^{-\frac{\lambda_i t}{4}} \|X_i(x)\|_g^4\leq&\, \sum_{i,j} e^{-\frac{(\lambda_i+\lambda_j) t}{8}} \langle X_i(x),X_j(x)\rangle^2\\
=&\,\|k_{TM}(\frac{t}{8},x,x)\|^2_{HS}\leq k_M(\frac{t}{8},x,x)\,,\nonumber
\end{align} 
where the first inequality holds since $\langle X_i(x),X_j(x)\rangle^2\geq 0$ for all $i,j$ and the last inequality holds, again, due to the fact that $\|k_{TM}(t,x,x)\|^2_{HS}\leq k_M(t,x,x)$ for all $t>0$ and $x\in M$ (see \cite[p.137]{Berard:1986}.)


Using the bounds (\ref{eq:3-1}), (\ref{eq:3-2}), (\ref{bd:exp_decay}), (\ref{ineq:L1}) and (\ref{bd:KernelLB:exp_decay}), 
we can bound the contribution of the high frequency part
\begin{align}
 & \left\|\sum_{\lambda_{i}>At^{-1} \mbox{ or } \lambda_{j}>At^{-1}}e^{-\left(\lambda_{i}+\lambda_{j}\right)t}\left\langle X_{i},X_{j}\right\rangle (w) \nabla\left\langle X_{i},X_{j}\right\rangle (z)\right\|_g\\
 \leq&\, \frac{C}{R} t^{-\frac{3n}{4}} e^{-\frac{A}{2}}
 \left( \fint_{B_R(z)}k_{M}(\frac{t}{8},x,x) d\sigma \fint_{B_R(z)}  k_{M}(\frac{t}{8},y,y)  d\sigma \right)^{1/2}\nonumber \\
\leq &\,\frac{C}{R} e^{-\frac{A}{2}} t^{-\frac{5n}{4}} \leq \frac{C}{\delta_0^{5/2}R^{7/2}} e^{-\frac{A}{2}},\nonumber
\end{align}
where the last inequality holds due to the choice of $\delta_0R_z\ll t^{1/2}$, which is arbitrarily small provided $A$ is sufficiently large.

To bound the contribution of the low frequency part for the gradient, we proceed as below: \begin{align}
&\left\| \sum_{i\notin \Lambda_H(A') \mbox{ or }j\notin \Lambda_H(A')}e^{-\left(\lambda_{i}+\lambda_{j}\right)t}\left\langle X_{i}(w),X_{j}(w)\right\rangle  \nabla\left\langle X_{i}(z),X_{j}(z)\right\rangle  \right\|_g \\
\leq&\, 2 \sum_{i\notin \Lambda_H(A'), j} e^{-\left(\lambda_{i}+\lambda_{j}\right)t}
\left|\langle X_{i}(w),X_{j}(w)\rangle  \right| \| \nabla \langle X_{i}(z),X_{j}(z)\rangle  \|_g \nonumber\\
 \leq&\,  2 \left\| k_{TM}(t,w,w)\right\|_{HS} \left(  \sum_{i\notin \Lambda_H(A'), j} e^{-\left(\lambda_{i}+\lambda_{j}\right)t}\|\nabla \langle X_{i}(z),X_{j}(z)\rangle \|_g^2 \right)^{1/2}\,, \nonumber
\end{align}
where the last inequality holds by the Cauchy-Schwartz inequality.
We can further bound the last term by the following:  
\begin{align}
&\sum_{i\notin \Lambda_H(A'), j} e^{-\left(\lambda_{i}+\lambda_{j}\right)t}\|\nabla \langle X_{i}(z),X_{j}(z)\rangle \|_g^2 \\
\leq&\, C \sum_{i\notin \Lambda_H(A'), j} e^{-\left(\lambda_{i}+\lambda_{j}\right)t} \frac{P_2(\lambda_i\frac{4t}{\delta_1^2})^2P_2(\lambda_j\frac{4t}{\delta_1^2})^2}{R^2} \fint_{B_R(z)}\| X_i\|^2_g\fint_{B_R(z)}\| X_j\|^2_g  \nonumber \\
 \leq &\,\frac{C}{R^2} P_2(\frac{4A'}{\delta_1^2})\left( \sum_{i\notin \Lambda_H(A')} e^{-\lambda_i t}  \fint_{B_R(z)}\| X_i\|^2_g \right) \left(\sum_j e^{-\lambda_j t} P_2(\lambda_j\frac{4t}{\delta_1^2})^2 \fint_{B_R(z)}\| X_j\|^2_g\right) \nonumber \\
 \leq&\, \frac{C}{R^2} \left(\int_M k_M(\frac{t}{2},x,x)\right)^{1/2} \left( \fint_{B_R(z)}\| k_{TM}(\frac{t}{4},x,x)\|_{HS}\right)^2 \left(\sum_{\lambda_i < A't^{-1}} e^{-\lambda_i t}\right)^{1/2} \nonumber \,,
\end{align}
where the first inequality holds due to (\ref{est:Ddot_sup}) in Corollary \ref{cor:est:sup} and the last inequality holds by similar arguments as in (\ref{eq:3-2}), (\ref{bd:exp_decay}), and (\ref{ineq:L1}).

By Weyl's law for the connection Laplacian (see \cite[p.92]{BGV04} or \cite[Lemma 4.2]{Wu:2014}) and the Kato's type inequality (see \cite[p.135]{Berard:1986}),
\begin{align}
\sum_{\lambda_i \leq A't^{-1}}  e^{-\lambda_i t}
\leq e \sum_{\lambda_i \leq A't^{-1}} e^{-\frac{\lambda_i t}{A'}} 
\leq e n \int_M k_M(\frac{t}{A'}, x,x) d\sigma
\leq e n (A't^{-1})^{n/2} 
\end{align}
where the last inequality follows from Proposition 3.1.2 in \cite{Jones_Maggioni_Schul:2010}. 
Therefore, 
\begin{align}
&\left\|\sum_{\lambda_{i}<A't^{-1}\mbox{or }\lambda_{j}<A't^{-1}}e^{-\left(\lambda_{i}+\lambda_{j}\right)t}\left\langle X_{i},X_{j}\right\rangle (w) \nabla\left\langle X_{i},X_{j}\right\rangle (z)\right\|_g\\
 \leq&\, \frac{C}{R}   t^{-3n/2} A'^{\frac{n}{4}}\leq \frac{C}{\delta_0^{3n}R^{3n+1}}  A'^{\frac{n}{4}}\,,\nonumber
\end{align}
which implies (\ref{truncated_grad_kTM}) if $A'$ is sufficiently small.
\end{proof}

To finish the heat kernel truncation approximation, we further restrict ourselves on the subset of eigenvector fields with large enough gradients. 
For any pair of eigenvector fields $X_i, X_j$, define the associated weight $\mu_{ij}$ as 
\be
\mu_{ij}:=\left(\fint_{B_{\delta_0R}(z)}\left\langle X_{i},X_{i}\right\rangle \right)^{-1/2}\left(\fint_{B_{\delta_0R}(z)}\left\langle X_{j},X_{j}\right\rangle \right)^{-1/2}\,.
\ee
which depends on $\delta_0R$.
For $v\in T_zM$ of unit length and $c_0>0$, define
\begin{align}
\Lambda_{E}(v,z,R,\delta_{0},c_{0})\label{Lambda_E} :=\,\left\{ \lambda_i, \lambda_j :\, |\nabla_{v}\left\langle X_{i},X_{j}\right\rangle (z)| \geq \frac{c_0}{R}\mu_{ij}^{-1}\right\}.
\end{align}
Note that for a given $v$, when $c_0$ is chosen small enough, $\Lambda_{E}$ and $\Lambda_{L}\left(A\right) \cap \Lambda_{H}(A') \cap \Lambda_E(v,z,R,\delta_{0},c_{0})$ are not empty.
The truncated series in Lemma \ref{lem:trucation} can be slightly modified as the following so that the condition of $\Lambda_{E}(v,z,R,\delta_{0},c_{0})$ is included. 

\begin{lem}
\label{lem:modified_truncation} 
Under Assumption \ref{Theorem2:MainAssumption0}, let $A$ and $A'$ be chosen as in Lemma \ref{lem:trucation}. 
For $c_0 = c_0(n, Q, \kappa, \|g\|_{C^{1,\alpha}}, \delta_1, \delta_1)$ small enough, there exist $c=c(n,Q, \kappa, \|g\|_{C^{1,\alpha}}, \delta_0, \delta_1, A, A', c_0)$ and $C=C(n,Q, \kappa, \|g\|_{C^{1,\alpha}}, \delta_0, \delta_1, A, A', c_0)$ so that
 for $v\in T_zM$ of unit length parallel to $\exp_z^{-1}(z)$, we have
\begin{equation}\label{truncatedE_grad_kTM}
|\nabla_{v}\left\Vert k_{TM}\left(t,w,z\right)\right\Vert _{HS}^{2}|
\sim^C_c
|\sum_{i,j\in\Lambda}e^{-\left(\lambda_{i}+\lambda_{j}\right)t} \nabla_{v}\left\langle X_{i}(z),X_{j}(z)\right\rangle \left\langle X_{i}(w),X_{j}(w)\right\rangle |\,,
\end{equation}
where $\Lambda := \Lambda_{L}\left(A\right) \cap \Lambda_{H}(A') \cap \Lambda_E(v,z,R_z,\delta_0,c_0)$ and $c, C\to 1$  when $c_0\to 0$.
\end{lem}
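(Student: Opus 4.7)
The plan is to start from the conclusion (\ref{truncated_grad_kTM}) of Lemma \ref{lem:trucation} and further restrict the index set by excising pairs $(\lambda_i,\lambda_j)$ outside $\Lambda_E(v,z,R,\delta_0,c_0)$. Pairing (\ref{truncated_grad_kTM}) with the unit vector $v\in T_zM$ gives
\[
|\nabla_v\|k_{TM}(t,w,z)\|_{HS}^2|\sim^C_c\Bigl|\sum_{i,j\in\Lambda_L(A)\cap\Lambda_H(A')}e^{-(\lambda_i+\lambda_j)t}\langle X_i(w),X_j(w)\rangle\nabla_v\langle X_i(z),X_j(z)\rangle\Bigr|,
\]
with $c,C\to 1$ as $A\to\infty$ and $A'\to 0$. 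It then suffices to show that the partial sum over pairs in $[\Lambda_L(A)\cap\Lambda_H(A')]\setminus\Lambda_E$ is a small fraction of $|\nabla_v\|k_{TM}(t,w,z)\|_{HS}^2|$.

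For such excluded pairs, the definition (\ref{Lambda_E}) gives $|\nabla_v\langle X_i,X_j\rangle(z)|<c_0R^{-1}\mu_{ij}^{-1}$ termwise, and $|\langle X_i(w),X_j(w)\rangle|\leq\|X_i(w)\|_g\|X_j(w)\|_g$ by Cauchy--Schwarz. Using the factorization of $\mu_{ij}^{-1}$ in $i$ and $j$, the excluded-set sum is bounded by
\[
\frac{c_0}{R}\Bigl(\sum_{\lambda_i\leq At^{-1}}e^{-\lambda_it}\|X_i(w)\|_g\Bigl(\fint_{B_{\delta_0R}(z)}\|X_i\|_g^2\Bigr)^{1/2}\Bigr)^{2}.
\]
A second application of Cauchy--Schwarz in $i$ separates this squared factor into the product of $\sum_ie^{-\lambda_it}\|X_i(w)\|_g^2$ and $\sum_ie^{-\lambda_it}\fint_{B_{\delta_0R}(z)}\|X_i\|_g^2$, each recognizable as the pointwise (resp.\ averaged) trace of $k_{TM}(t,\cdot,\cdot)$ on the diagonal. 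Kato's inequality $\mathrm{tr}\,k_{TM}(t,x,x)\leq nk_M(t,x,x)$ together with the Li--Yau bound (\ref{bd:KernelLB:exp_decay}) then yields that the excluded-set sum is at most $Cc_0R^{-1}t^{-n}$ with $C=C(n,Q,\kappa)$.

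On the other hand, Assumption \ref{Theorem2:MainAssumption0} ensures $\tfrac{1}{2}\delta_0R\leq d_g(z,w)\leq\delta_0R$ and $d_g(z,w)^2\leq\delta_0^2R^2\ll\delta_1^2R^2\leq 2t$, so the $O(d_g(z,w)^3/t^2)$ correction in (\ref{est:grad_kTM}) is dominated by the leading term and
\[
|\nabla_v\|k_{TM}(t,w,z)\|_{HS}^2|\,\gtrsim\,\frac{\delta_0}{\delta_1^2}R^{-1}t^{-n},
\]
with constant depending only on $n,Q,\kappa$. Dividing the excluded-set bound by this lower bound gives a ratio of order $Cc_0\delta_1^2/\delta_0$, which can be made arbitrarily small by choosing $c_0=c_0(n,Q,\kappa,\|g\|_{C^{1,\alpha}},\delta_0,\delta_1)$ small enough. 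Combining with the first paragraph yields (\ref{truncatedE_grad_kTM}) with $c,C\to 1$ as $c_0\to 0$.

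The main obstacle is the scale bookkeeping in the second paragraph: one must ensure that the Cauchy--Schwarz splitting of the excluded-set sum produces a product of two sums, each recognizable as a (pointwise or averaged) diagonal value of $\mathrm{tr}\,k_{TM}$, so that the universal $t^{-n/2}$ Li--Yau bound applies to both. Only then does the total estimate scale correctly in $R$ and $t$, so that after dividing by the leading-order $|\nabla_v\|k_{TM}\|_{HS}^2|$ furnished by Lemma \ref{conj:est:kTM} one obtains a pure multiple of $c_0$, which is precisely what drives $c,C\to 1$ in (\ref{truncatedE_grad_kTM}).
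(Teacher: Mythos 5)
Your argument is correct and follows essentially the same route as the paper's proof: bound the sum over the excluded set $\Lambda_L(A)\cap\Lambda_H(A')\cap\Lambda_E^c$ using the defining cut-off $|\nabla_v\langle X_i,X_j\rangle(z)|<c_0R^{-1}\mu_{ij}^{-1}$ together with Cauchy--Schwarz and heat-kernel comparison estimates to extract a factor of $c_0$, then compare against the leading-order lower bound on $|\nabla_v\|k_{TM}(t,w,z)\|_{HS}^2|$ from Lemma \ref{conj:est:kTM} and shrink $c_0$. The only differences are organizational: you apply the pointwise Cauchy--Schwarz bound $|\langle X_i(w),X_j(w)\rangle|\le\|X_i(w)\|\,\|X_j(w)\|$ first so that the double sum collapses into the square of a single-index sum, and then split that sum, whereas the paper applies Cauchy--Schwarz to the full double sum first (keeping the $z$- and $w$-factors separate) and factorizes afterwards; you also make the final comparison with Lemma \ref{conj:est:kTM} explicit, which the paper compresses to ``by reducing $c_0$.'' One small attribution note: the pointwise bound $\mathrm{tr}\,k_{TM}(t,x,x)\le n\,k_M(t,x,x)$ is really the Hess--Schrader--Uhlenbrock semigroup domination restricted to the diagonal rather than Kato's inequality in the integrated heat-trace sense; alternatively the same $t^{-n/2}$ control on $\mathrm{tr}\,k_{TM}(t,x,x)$ follows via an intermediate Cauchy--Schwarz as the paper does in (\ref{ineq:L1}). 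This does not affect the validity of your argument.
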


\begin{proof}
Let $\Lambda^c = \Lambda_L(A) \cap \Lambda_H(A') \cap \left( \Lambda_E(v,z,R_z,\delta_0,c_0)\right)^c$. We have
\begin{align}
&| \sum_{i,j \in \Lambda^c} e^{-\left(\lambda_{i}+\lambda_{j}\right)t}  \nabla_v\langle X_i(z), X_j(z) \rangle  \langle X_i(w), X_j(w) \rangle| \nonumber \\
\leq&\, \left( \sum_{i,j \in \Lambda^c}  e^{-\left(\lambda_{i}+\lambda_{j}\right)t} \left|  \langle X_i(w), X_j(w) \rangle\right|^2 \right)^{1/2} \left( \sum_{i,j \in \Lambda^c} e^{-\left(\lambda_{i}+\lambda_{j}\right)t} |\nabla_v \langle X_i(z), X_j(z) \rangle|^2\right)^{1/2} \nonumber\\
 \leq&\, k_{M}(t,w,w)^{1/2} \left( \sum_{i,j \in \Lambda^c} e^{-\left(\lambda_{i}+\lambda_{j}\right)t} \frac{c_0^2}{R^2} \fint_B\langle X_i, X_i\rangle \fint_B \langle X_j, X_j\rangle\right)^{1/2}\nonumber\\
 \leq&\, \frac{c_0}{R} k_{M}(t,z,z)^{1/2} \left(  \int_M k_M(t,x,x) \right)^{1/2} \fint_Bk_{M}(t,w',w')^2 , \nonumber
\end{align}
where the first inequality holds due to the Cauchy-Schwartz inequality, the second inequality holds by the fact that 
\begin{equation}
\sum_{i,j \in \Lambda^c}  e^{-\left(\lambda_{i}+\lambda_{j}\right)t} \left|  \langle X_i(w), X_j(w) \rangle\right|^2\leq \Vert k_{TM}(t,w,w)\Vert_{HS}\leq  k_{M}(t,w,w)
\end{equation} 
and 
\begin{equation}
\sum_{i,j \in \Lambda^c} e^{-\left(\lambda_{i}+\lambda_{j}\right)t} |\nabla_v \langle X_i(z), X_j(z) \rangle |\leq \sum_{i,j \in \Lambda^c} e^{-\left(\lambda_{i}+\lambda_{j}\right)t} \frac{c_0^2}{R^2} \fint_B\langle X_i, X_i\rangle \fint_B \langle X_j, X_j\rangle
\end{equation} 
by the constraint (\ref{Lambda_E}) of $\Lambda_E(v,z,R_z,\delta_0,c_0)$, and the last inequality follows from the Kato's type inequality and similar arguments as in (\ref{ineq:L1}). This inequality proves (\ref{truncatedE_grad_kTM}) by reducing $c_0$. 
\end{proof}

 \begin{lem} 
 \label{lem:main} Let $A, A'$ and $c_0$ be chosen as in  Lemma \ref{lem:trucation} and Lemma \ref{lem:modified_truncation} under Assumption \ref{Theorem2:MainAssumption0}. For any pair $\lambda_{i},\lambda_{j}\in\Lambda_{E} \left(v,z,R,\delta_{0},c_{0}\right)$, there exist constants $C_1=C_1(c_0)$, $C_2=C_2(n, Q, \kappa, \|g\|_{C^{1,\alpha}}, c_0)$, and $\tau = \tau(n, Q, \kappa, \|g\|_{C^{1,\alpha}}, c_0 )$ independent of $i,j$ so that  
\begin{equation}
\label{est:grad}
C_1 R^{-1} \mu_{ij}^{-1} \leq |\nabla_{P^{z'}_zv}\left\langle X_{i},X_{j}\right\rangle (z')|\leq C_2 R^{-1} \mu_{ij}^{-1}.
\end{equation}

Moreover, for sufficiently large $A$ and sufficiently small $A'$ chosen in Lemma \ref{lem:trucation}, there exist $\lambda_{i},\lambda_{j}\in\Lambda_{L}\left(A\right)\cap \Lambda_H(A') \cap \Lambda_{E}\left(v,z,R,\delta_{0},c_{0}\right)$
so that 
\be \label{unif_bd}
\mu_{ij}\leq C\left(n,\kappa,D, \delta_0, \delta_1\right)\,. 
\ee
\end{lem}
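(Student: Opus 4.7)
The two-sided bound (\ref{est:grad}) will follow directly from Corollary \ref{cor:est:sup} applied at radius $\delta_0 R$, while the uniform bound (\ref{unif_bd}) will be extracted via a pigeonhole argument against the heat-kernel gradient lower bound from Lemma \ref{conj:est:kTM} combined with the truncation from Lemma \ref{lem:modified_truncation}.

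For the upper bound in (\ref{est:grad}), apply estimate (\ref{est:Ddot_sup}) with ball radius $\delta_0 R$ in place of $R$:
\begin{equation*}
\|\nabla\langle X_i, X_j\rangle(z')\|_g \leq C (\delta_0 R)^{-1} P_2(\lambda_i (\delta_0 R)^2) P_2(\lambda_j (\delta_0 R)^2) \mu_{ij}^{-1}.
\end{equation*}
Since $\lambda_i, \lambda_j \in \Lambda_L(A)$ and $t \leq \delta_1^2 R^2$, one has $\lambda_i R^2, \lambda_j R^2 \leq 2A/\delta_1^2$, so the polynomial factors are universally bounded and absorb into a constant $C_2$. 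For the lower bound, the definition of $\Lambda_E$ yields $|\nabla_v\langle X_i, X_j\rangle(z)| \geq c_0 R^{-1}\mu_{ij}^{-1}$. Applying (\ref{est:Ddot_diff}) at radius $\delta_0 R$ and the triangle inequality gives
\begin{equation*}
|\nabla_{P_z^{z'}v}\langle X_i, X_j\rangle(z')| \geq c_0 R^{-1}\mu_{ij}^{-1} - C\bigl(d_g(z,z')/(\delta_0 R)\bigr)^\alpha (\delta_0 R)^{-1}\mu_{ij}^{-1};
\end{equation*}
choosing $\tau$ large enough that $d_g(z,z') \leq R/\tau$ makes the error at most half of the leading term, which gives $C_1 = c_0/2$.

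To establish (\ref{unif_bd}), I would pick $w \in B_{\delta_0 R}(z) \setminus B_{\delta_0 R/2}(z)$ so that $d_g(z,w) \sim \delta_0 R$, and take $v$ parallel to $\exp_z^{-1}(w)$. Lemma \ref{conj:est:kTM} then gives $|\nabla_v \|k_{TM}(t,w,z)\|_{HS}^2| \gtrsim \delta_0 R \cdot t^{-n-1}$. By Lemma \ref{lem:modified_truncation}, this quantity is comparable to $\sum_{i,j \in\Lambda} e^{-(\lambda_i+\lambda_j)t}|\nabla_v\langle X_i, X_j\rangle(z)|\,|\langle X_i(w), X_j(w)\rangle|$, where $\Lambda := \Lambda_L(A)\cap\Lambda_H(A')\cap\Lambda_E(v,z,R,\delta_0,c_0)$. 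Weyl's law for $\nabla^2$, via Kato's inequality $\sum_i e^{-\lambda_i s} \leq n\sum_i e^{-\nu_i s}$ combined with the scalar Weyl bound, controls $|\Lambda_L(A)| \leq N_0(n,\kappa,D,A,\delta_1)$. Pigeonhole then produces some $(i^*,j^*)\in\Lambda\times\Lambda$ with
\begin{equation*}
e^{-(\lambda_{i^*}+\lambda_{j^*})t}|\nabla_v\langle X_{i^*}, X_{j^*}\rangle(z)|\,|\langle X_{i^*}(w), X_{j^*}(w)\rangle| \gtrsim \frac{\delta_0 R}{N_0^2 t^{n+1}}.
\end{equation*}
Applying the upper bounds (\ref{est:dot_sup}) and (\ref{est:Ddot_sup}) of Corollary \ref{cor:est:sup} at radius $\delta_0 R$ to this single term, the left-hand side is at most $C R^{-1}\mu_{i^*j^*}^{-2}$ after absorbing the uniformly bounded exponential and polynomial factors; solving for $\mu_{i^*j^*}$ gives the desired universal bound.

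The principal difficulty is bookkeeping: one must track through all polynomial factors $P_1(\lambda_i R^2), P_2(\lambda_j R^2)$ in Corollary \ref{cor:est:sup} and verify they remain bounded under the restriction $\lambda_i, \lambda_j \in \Lambda_L(A)$, and one must ensure that $|\Lambda_L(A)|$ admits a Weyl bound depending only on $n, \kappa, D, A, \delta_1$. Both are manageable, but the constants blow up with $A$, so the choices of $A$ and $A'$ in Lemma \ref{lem:trucation} and of $c_0$ in Lemma \ref{lem:modified_truncation} must be calibrated so that the pigeonhole factor $N_0^{-2}$ does not overwhelm the lower bound from Lemma \ref{conj:est:kTM}.
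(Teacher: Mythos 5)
Your proof of the two-sided estimate (\ref{est:grad}) matches the paper's argument essentially line for line: upper bound from (\ref{est:Ddot_sup}), lower bound from the defining inequality of $\Lambda_E$ together with the H\"older continuity bound (\ref{est:Ddot_diff}) and the triangle inequality. (One cosmetic remark: the paper writes ``$\tau$ small enough,'' but since $\tau>1$ and the parametrization domain is $B_{\tau^{-1}R}(z)$, the intended meaning is that $\tau^{-1}R$ be small, i.e.\ $\tau$ large, which is what you wrote. You are also right to note explicitly that the polynomial factors $P_2(\lambda_i R^2)$ are only controlled because $\lambda_i,\lambda_j\in\Lambda_L(A)$; the paper is silent on this, so your bookkeeping is actually a bit more careful.)

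For the uniform bound (\ref{unif_bd}) your route diverges slightly from the paper's but reaches the same conclusion. The paper first applies the $\Lambda_E$-upper bound $|\nabla_v\langle X_i,X_j\rangle(z)|\leq C_2R^{-1}\mu_{ij}^{-1}$ inside the truncated sum, then uses Cauchy--Schwarz to factor the sum as $\bigl(\sum e^{-2(\lambda_i+\lambda_j)t}\langle X_i(w),X_j(w)\rangle^2\bigr)^{1/2}\bigl(\sum\mu_{ij}^{-2}\bigr)^{1/2}$, bounds the first factor by $\|k_{TM}(2t,w,w)\|_{HS}\lesssim t^{-n/2}$, compares to the lower bound from Lemma \ref{conj:est:kTM}, and only then applies Weyl plus pigeonhole to the $\sum\mu_{ij}^{-2}$ factor. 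You instead pigeonhole the entire truncated sum directly against the Weyl count $N_0^2$ of index pairs in $\Lambda_L(A)\times\Lambda_L(A)$, isolating a dominant pair $(i^*,j^*)$, and then bound that single term by $CR^{-1}\mu_{i^*j^*}^{-2}$ via (\ref{est:dot_sup}) and (\ref{est:Ddot_sup}). Both routes are correct and give a constant depending on the same data $n,\kappa,D,\delta_0,\delta_1,A$; they differ only in the precise powers of $\delta_0,\delta_1$ appearing in the final bound, which is immaterial. Your version avoids Cauchy--Schwarz and is arguably a bit more direct, at the cost of losing an extra factor of $N_0$ relative to the Cauchy--Schwarz argument (pigeonholing the whole sum gives $N_0^2$ where Cauchy--Schwarz gives $N_0$); since both are absorbed into a universal constant this does not matter here.

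Both proofs share the implicit caveat, which you correctly flag at the end, that the constants in (\ref{est:grad}) and the polynomial factors from Corollary \ref{cor:est:sup} remain bounded only because the eigenvalues lie in $\Lambda_L(A)$; as stated the first part of the lemma concerns arbitrary pairs in $\Lambda_E$, but the argument, in both the paper and your version, really delivers uniform constants only on the bounded window $\Lambda_L(A)\cap\Lambda_H(A')\cap\Lambda_E$, which is all that the second part uses.
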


\begin{proof}
The upper bound in (\ref{est:grad}) follows straightforward from inequalities (\ref{est:Ddot_sup}); the lower bound in (\ref{est:grad}) comes directly from (\ref{est:Ddot_diff}) and the definition of $\Lambda_E(v,z,R,\delta_0, c_0)$ in (\ref{Lambda_E}). Indeed, we have
\begin{align}
| \nabla_{P_z^{z'}v}\left\langle X_{i},X_{j}\right\rangle (z')|\geq&\, |\nabla_{v}\left\langle X_{i},X_{j}\right\rangle (z)|- |\nabla_{P_z^{z'}v}\left\langle X_{i},X_{j}\right\rangle (z')-\nabla_{v}\left\langle X_{i},X_{j}\right\rangle (z)| \nonumber\\
\geq&\,  \frac{c_0}{R}\mu_{ij}^{-1}-\frac{Cd_g(z,z')^\alpha}{R^{1+\alpha}}\mu_{ij}^{-1},
\end{align}
which leads to the lower bound if $\tau$ is chosen small enough.

Now we show (\ref{unif_bd}). 
Denote $\Lambda:=\Lambda_{L}\left(A\right)\cap \Lambda_H(A') \cap \Lambda_{E}\left(v,z,R,\delta_{0},c_{0}\right)$ and $B:=B_{\delta_0R}(z)$. From previous Lemmas and (\ref{est:grad}), we obtain
\begin{align}
 & | \nabla_v \Vert k_{TM}(t,w,z) \Vert^2_{HS} |  \\
 \sim^C_c&\, \left| \sum_{\lambda_i,\lambda_j \in \Lambda} 
	e^{-(\lambda_i+\lambda_j)t} \langle X_i(w),X_j(w) \rangle \nabla_v \langle X_i(z),X_j(z) \rangle\right|  \nonumber\\
 \leq&\, C_2 R^{-1}  \sum_{\lambda_i,\lambda_j \in \Lambda} 
	e^{-(\lambda_i+\lambda_j)t} \left|  \langle X_i(w),X_j(w) \rangle  \right| \mu_{ij}^{-1}\,,\nonumber
 \end{align} 
where the approximation is by (\ref{truncated_grad_kTM}) and the inequality is by (\ref{est:grad}), which by the Cauchy-Schwarz inequality is bounded by
\begin{align}
 C_2  R^{-1} \left( \sum_{\lambda_i,\lambda_j \in \Lambda}  e^{-2(\lambda_i+\lambda_j)t} \langle X_i(w),X_j(w) \rangle^2 \right)^{1/2} \left( \sum_{\lambda_i,\lambda_j \in \Lambda} \mu_{ij}^{-2}\right)^{1/2}.
\end{align}
Hence,
\begin{align}
\label{ineq:kTM_grad_kTM}
| \nabla_v \Vert k_{TM}(t,z,w) \Vert^2_{HS} |\leq&\, C_2 R^{-1} \Vert k_{TM}(2t,w,w) \Vert_{HS} \left( \sum_{\lambda_i,\lambda_j \in \Lambda} \mu_{ij}^{-2} \right)^{1/2}\\
\leq &\,C_2 R^{-1}  k_{M}(2t,w,w) \left( \sum_{\lambda_i,\lambda_j \in \Lambda} \mu_{ij}^{-2} \right)^{1/2}\,.\nonumber
\end{align}
The bound (\ref{ineq:kTM_grad_kTM}), together with the estimate (\ref{est:grad_kTM}), imply that
\begin{equation*}
t^{-n} \frac{R}{t} \leq C R^{-1} t^{-n/2}  \left( \sum_{\lambda_i,\lambda_j \in \Lambda} \mu_{ij}^{-2} \right)^{1/2}
\end{equation*}
and thus,
\begin{equation}
\label{ineq:main_lemma}
t^{-n}\left( \frac{R^2}{t}\right)^2 \leq  C \sum_{\lambda_i,\lambda_j \in \Lambda} \mu_{ij}^{-2}.
\end{equation}

On the other hand, it is known that  \cite[Lemma 4.2]{Wu:2014}  there exists a constant $B(n,\kappa,D)$ dependent only on $n, \kappa$ and $D$ such that for $T>0$, the following inequality holds:
\begin{equation}\label{ineq:weyl_bd}
\# \{j : 0 \leq \lambda_j \leq T \} \leq en + B(n,\kappa,D) T^{n/2}. 
\end{equation}
Here $e$ is the Euler's number. 

Since $\lambda_i, \lambda_j \leq At^{-1}$, it follows from (\ref{ineq:main_lemma}), (\ref{ineq:weyl_bd})
that there exist some $i,j$ such that
\begin{equation}
t^{-n}\left( \frac{R^2}{t}\right)^2 \leq  C \left( en + B(n,\kappa,D) (At^{-1})^{n/2}  \right)^2 \mu_{ij}^{-2} 
\end{equation}
That is, 
\be
\mu_{ij} \leq C\left( en + B(n,\kappa,D) (At^{-1})^{n/2}  \right) t^{n/2} \frac{t}{R^2} <C(n, Q,\kappa, \|g\|_{C^{1,\alpha}}, D, \delta_0,\delta_1)
\ee
since $t\leq\delta_1 R_z$. Thus, we have the uniform bound (\ref{unif_bd}). Note that the diameter upper bound can be controlled by
the volume upper bound and injectivity radius lower bound. Hence, 
\be
\mu_{ij} \leq C(n, Q, \kappa, V, i_0, \delta_1).
\ee
\end{proof}


With the above Lemmas, we are now ready to prove Theorem \ref{thm:par}. 
\parthm*

\begin{proof}

Take $t,\delta_0$ in Lemma \ref{lem:trucation} and $c_0$ chosen in Lemma \ref{lem:modified_truncation}. To simplify the notation, denote $\mu_l:=\mu_{i_l,j_l}$, where $l=1,\ldots,d$. Pick an arbitrary unit vector $v_1$. It follows from Lemma \ref{lem:main} that there exist $i_1,j_1 \in \Lambda_L(A) \cap \Lambda(A') \cap \Lambda_E(v_1,z,R,\delta_0,c_0)$ such that $\mu_{1}\leq C\left(d,\kappa,D, \delta_0, \delta_1, A, A', g\right)$ and
\begin{equation}
\left| \mu_1 \nabla_{v_1} \langle X_{i_1}, X_{j_1}  \rangle(z) \right| \geq \frac{C_1}{R}\,. 
\end{equation}
Let $v_2$ be a unit vector orthogonal to $\nabla \langle X_{i_1}, X_{j_1} \rangle (z)$. Applying Lemma \ref{lem:main} again, we find another pair of indices $i_2, j_2 \in \Lambda_L(A) \cap \Lambda_H(A') \cap \Lambda_E(v_2,z, R,\delta_0,c_0)$ so that $\mu_{2}\leq C\left(n,\kappa,D, \delta_0, \delta_1, A, A', g\right)$ and
\begin{equation}
\left| \mu_2 \nabla_{v_2} \langle X_{i_2}, X_{j_2}  \rangle(z) \right| \geq \frac{C_1}{R}\,.
\end{equation}
Note that by the choice of $v_2$, we have $\nabla_{v_2} \langle X_{i_1}, X_{j_1} \rangle(z)=0$, and hence $(i_1,j_1)\neq (i_2,j_2)$. We could proceed and find $v_3$ and so on.
Suppose that we have chosen $k$ vectors $v_1, v_2, \cdots, v_k$, $k<n$, and the corresponding indices $(i_1,j_1), (i_2,j_2), \cdots, (i_k,j_k)$ such that 
\begin{equation}
\left| \mu_l \nabla_{v_l} \langle X_{i_l}, X_{j_l}  \rangle(z) \right| \geq \frac{C_1}{R}\quad \mbox{for all }  l=1,\cdots,k.
\end{equation}
Pick another unit vector $v_{k+1}$ so that $v_{k+1}$ is orthogonal to $\{ \nabla  \langle X_{i_1}, X_{j_1}  \rangle(z)\}_{l=1}^k$. By Lemma \ref{lem:main}, we have $i_{k+1},j_{k+1}  \in \Lambda_L(A) \cap \Lambda_H(A') \cap \Lambda_E(v_{k+1},z, R,\delta_0,c_0)$  so that $\mu_{k+1}\leq C\left(n,\kappa,D, \delta_0, \delta_1, A, A', g\right)$ and
\begin{equation}
\left| \mu_{k+1} \nabla_{v_{k+1}} \langle X_{i_{k+1}}, X_{j_{k+1}}  \rangle(z) \right| \geq \frac{C_1}{R}.
\end{equation}
We now claim $\{v_1, v_2, \cdots, v_{k+1} \}$ is linearly independent.
To show this, we assume that
\begin{equation}\label{Proof:MainTheorem1:Independent}
a^1 v_1 + a^2 v_2 + \cdots + a^{k+1} v_{k+1} = 0.
\end{equation}
Consider the $(k+1)\times (k+1)$ matrix
\begin{equation}
A_{k+1} := \left( \mu_m \nabla_{v_n} \langle X_{i_m}, X_{j_m}  \rangle(z) \right)_{m,n=1,\cdots,k+1}.
\end{equation}
On one hand, by (\ref{Proof:MainTheorem1:Independent}), we have 
\begin{align}
 A_{k+1} \left( \begin{matrix}
 a^1\\
 a^2\\
\vdots \\
a^{k+1}
\end{matrix} \right) 
=&\, a^1 \left( 
\begin{matrix}
 \mu_1 \nabla_{v_1} \langle X_{i_1}, X_{j_1}  \rangle \\
  \vdots \\
 \mu_{k+1} \nabla_{v_1} \langle X_{i_{k+1}}, X_{j_{k+1}}  \rangle 
\end{matrix}
\right) 
+ \cdots + 
 a^{k+1} \left( 
\begin{matrix}
 \mu_1 \nabla_{v_{k+1} } \langle X_{i_1}, X_{j_1}  \rangle \\
  \vdots  \\
 \mu_{k+1} \nabla_{v_{k+1} } \langle X_{i_{k+1}}, X_{j_{k+1}}  \rangle  
\end{matrix}
\right)\nonumber \\
=&\,  \left(
\begin{matrix}
\langle a^1v_1+\cdots + a^{k+1} v_{k+1}, \mu_1 \nabla \langle X_{i_1}, X_{j_1} \rangle(z)  \rangle \\
\vdots \\
\langle a^1v_1+\cdots + a^{k+1} v_{k+1}, \mu_{k+1} \nabla \langle X_{i_{k+1}}, X_{j_{k+1}} \rangle(z)  \rangle
\end{matrix}
\right) =0\,.
\end{align}
On the other hand, the matrix $A_{k+1}$ is lower-triangular since $\nabla_{v_n} \langle X_{i_m}, X_{j_m}\rangle =0$, for all $n>m$, which follows by the choice of the vectors $\{v_l\}_{l=1}^{k+1}$. Therefore $a^l =0$ for all $l=1,\cdots, k+1$ and hence $\{v_1, v_2, \cdots, v_{k+1} \}$ is linearly independent. 

With the above chosen eigenvector fields, we now show the embedding propoerty. For any $x_1,x_2 \in B_{\delta_0 R}(z)$, let $\gamma(t), 0\leq t\leq 1$ be the geodesic curve joining $x_1=\gamma(0)$ and $x_2=\gamma(1)$. 
Under the harmonic coordinate chart $(\phi, U), \phi: U \subset \mathbb{R}^n \rightarrow M$ so that $\phi(0)=z$ and $g_{ab}(0)=\delta_{ab}$, we have $\gamma(t) = \phi(\tilde{\gamma}(t))$ for some curve $\tilde{\gamma}(t) \subset U$. We also assume that 
$\|\tilde{\gamma}'(t)\|_g= d_g(x_1,x_2)$. 
Using the basis $\{v_l \}_{l=1}^n \subset T_zM \cong \mathbb{R}^n$, we have
\be \label{local_gamma}
\tilde{\gamma}'(t) = \sum_{l=1}^n a^l(t) v_l \quad \mbox{and} \quad \gamma'(t) = \sum_{l=1}^n a^l(t) d\phi|_{\tilde{\gamma}(t)}(v_l)
\ee

We claim that there exists $c>0$ so that
\begin{equation}\label{bd_1}
\left\|  \nabla \tilde{\Phi} \circ \phi |_{0}  \tilde{\gamma}'(t) \right\|_{\mathbb{R}^d} \geq \frac{c}{R} \left\|  \tilde{\gamma}'(t) \right\|_g =\frac{c}{R},\end{equation}
that is, the geodesic deformation has a lower bound. We prove this statement by contradiction. 
Denote
 $\tilde{\Phi}_k := \left( \mu_1 \langle X_{i_1}, X_{j_1} \rangle, \cdots, \mu_k \langle X_{i_k}, X_{j_k} \rangle \right)$ so that $\tilde{\Phi} = \tilde{\Phi}_d$.
Suppose that for all $ k=1, \cdots, n$,
\begin{equation}
\label{ass_contradiction}
\left\|  \nabla \tilde{\Phi}_k \circ \phi |_0   \frac{d}{dt} \tilde{\gamma}(t) \right\|_{\mathbb{R}^k} \leq \frac{c_m}{R}, 
\end{equation}
for any small constant $c_m$. 
%
We have, by the local parametrization (\ref{local_gamma}),  
\be
\left\| \nabla \tilde{\Phi}_k \circ \phi |_{0}   \tilde{\gamma}'(t) \right\|_{\mathbb{R}^k} 
=  \left\| \sum_{l=1}^n a^l(t) \nabla  \tilde{\Phi}_k \circ \phi |_0 v_l \right\|_{\mathbb{R}^k}
= \left\| \sum_{l=1}^n a^l \nabla_{v_l}  \tilde{\Phi}_k(z) \right\|_{\mathbb{R}^k}.
\ee
Since $\nabla_{v_l} \langle X_{i_m}, X_{j_m}\rangle=0$ for all $m<l$, the first $k$ terms are the only non-zero terms, that is,
\be
\left\| \nabla \tilde{\Phi}_k \circ \phi |_{0}   \tilde{\gamma}'(t) \right\|_{\mathbb{R}^k} 
= \left\| \sum_{l=1}^k a^l \nabla_{v_l} \tilde{\Phi}_k(z) \right\|_{\mathbb{R}^k}.
\ee 
We further note that 
\begin{align}
\left\| \sum_{l=1}^k a^l \nabla_{v_l} \tilde{\Phi}_k(z) \right\|_{\mathbb{R}^k} 
\geq&\, \left| a^k  \mu_k \nabla_{v_k}\langle X_{i_k}, X_{j_k} \rangle(z) \right| -\sum_{l=1}^{k-1} \left\| a^l \nabla_{v_l} \tilde{\Phi}_k(z) \right\|_{\mathbb{R}^k} \\
\geq&\, \frac{c_0}{R}|a^k| - \frac{c}{R} \sum_{l=1}^{k-1} | a^l|\nonumber
\end{align}
since, by the choice of eigenvectors fields, $\mu_k \nabla_{v_k}\langle X_{i_k}, X_{j_k} \rangle(z) \geq \frac{c_0}{R}$ and, by (\ref{est:Ddot_sup}) in Corollary \ref{lem:trucation}, $\mu_{ij}|\nabla_{v_l}\langle X_i, X_j \rangle(z)| \leq \frac{c}{R}$.  
By the assumption (\ref{ass_contradiction}), we have
 \be
\label{ass_contradiction2}
 \frac{c_0}{R}|a^k| - \frac{c}{R} \sum_{l=1}^{k-1} | a^l| \leq \left\|  \nabla \tilde{\Phi}_k \circ \phi |_0   \frac{d}{dt} \tilde{\gamma}(t) \right\|_{\mathbb{R}^k} \leq \frac{c_m}{R}\,.
 \ee
By induction, we now show that
\begin{equation}
\label{induction}
\left| a^k \right| \leq \frac{c_m}{c_0}\big(\frac{c}{c_0}+1\big)^{k-1}.
\end{equation}
For $k=1$, (\ref{induction}) follows from (\ref{ass_contradiction2}). 
Suppose that (\ref{induction}) is true for $k=l$. Then 
\begin{align}
|a^{l+1} |\leq&\, \frac{c_m}{c_0} + \frac{c}{c_0}  \sum_{i=1}^{l} | a^i| \\
\leq&\, \frac{c_m}{c_0} + \frac{c}{c_0} \sum_{i=1}^{l} \frac{c_m}{c_0}\big(\frac{c}{c_0}+1\big)^{i-1} \nonumber\\
=&\, \frac{c_m}{c_0}\left(1+ \frac{c}{c_0} \frac{(\frac{c}{c_0}+1)^l-1}{\frac{c}{c_0}+1-1}\right) =\frac{c_m}{c_0}\big(\frac{c}{c_0}+1\big)^l\nonumber
\end{align}
and we have (\ref{induction}).
The inequality (\ref{induction}) implies $\left| a^l\right|$ is arbitrarily small for all $l=1,\cdots, n$ for $c_m$ sufficiently small,  which leads to a contradiction since $\|\tilde{\gamma}'(t)\|_g = d_g(x_1,x_2)$ is constant.

By Corollary \ref{cor:est:sup}, we also have 
\begin{equation}\label{bd_2}
\Vert \nabla \tilde{\Phi} |_x - \nabla \tilde{\Phi} |_z \Vert_{op} \leq C \left( \frac{d_g(x,z)}{R}\right)^{\alpha} \frac{1}{R} \leq C \delta_0^{\alpha} \frac{1}{R} 
\end{equation}
which is small when $\delta_0$ is small enough. 

With the above preparation, the lower bound in (\ref{isom_bd}) follows by the bounds (\ref{bd_1}) and (\ref{bd_2}). Indeed, we have 
\begin{align}
&\left| \tilde{\Phi}(x_2) - \tilde{\Phi}(x_1)\right| \\
=&\, \left| \int_0^1 \nabla \left(\tilde{\Phi} \circ \phi \right)_{\tilde{\gamma}(t)} \frac{d}{dt}\tilde{\gamma}(t)dt\right| \nonumber\\
=&\, \left| \int_0^1 \nabla \tilde{\Phi} \circ \phi |_0  \frac{d}{dt}\tilde{\gamma}(t) + \left( \nabla \tilde{\Phi} \circ \phi|_{\tilde{\gamma}(t)} - \nabla \tilde{\Phi} \circ \phi |_0\right)  \frac{d}{dt}\tilde{\gamma}(t)dt \right| \nonumber\\
\geq &\, \int_0^1 \frac{c}{R} \|\tilde{\gamma}'(t)\|_g dt = \frac{c}{R} d_M (x_1, x_2),\nonumber
\end{align}
where the last inequality holds due to (\ref{bd_1}) and (\ref{bd_2}) when $\delta_0$ is small enough.

To finish the proof of the low distortion property (\ref{isom_bd}) of the mapping $\tilde{\Phi}_z$, we should the upper bound in (\ref{isom_bd}). Note that b Corollary \ref{cor:est:sup}, for all $k,l=1, \cdots, n$,
\begin{equation}
\left|  \mu_k \nabla_{v_l} \langle X_{i_k}, X_{j_k}\rangle |_{\gamma(t)} \right| \leq  \frac{C_2}{R},
\end{equation}
which leads to the upper bound in (\ref{isom_bd}),
\be
\left\| \tilde{\Phi}(x_2) - \tilde{\Phi}(x_1) \right\|_{\mathbb{R}^n} 
= \left\|  \int_0^1 \nabla \tilde{\Phi} |_{\gamma(t)}  \gamma'(t) dt  \right\|_{\mathbb{R}^n}
\leq  \frac{C_2}{R}d_g(x_1, x_2)\,.
\ee
\end{proof}

\section{Manifold Embedding by Truncated Vector Diffusion Map}
\label{sec:VDM}

The goal in this section is to prove Theorem \ref{thm:emb} under the assumption of the smooth metric; that is, we could embed manifolds in $\mathcal{M}_{n,\kappa, i_0, V}$ into $\mathbb{R}^{N^2}$ for some finite $N$, where $N$ depends only on $n,\kappa$, $i_0$ and $V$.  
To find $n$ and the embedding for $\mathcal{M}_{n, \kappa, i_0, V}$, we need two lemmas. First, in Lemma \ref{lem:local_embedding}, we show that based on Theorem \ref{thm:emb}, for any $M \in \mathcal{M}_{n,\kappa, i_0,V}$, $M$ can be immersed into an Euclidean space. Note that we have to link the conditions of $\mathcal{M}_{n,\kappa, i_0,V}$ back to the assumptions for Theorem \ref{thm:emb}. Second, in Lemma \ref{lem:remainder bound} we control the remainder term of the series expansion of $\| k_{TM} (t,w,z) \|^2_{HS}$; together with the upper bound of $\| k_{TM} (t,w,z) \|^2_{HS}$, the immersion is improved to the embedding and we finish the proof of Theorem \ref{thm:emb}. 

\begin{lem}
\label{lem:local_embedding}
For any $M \in \mathcal{M}_{n,\kappa,i_0,V}$ and any $z \in M$, there exist a positive integer $m$ depending only on $n,\kappa,i_0,V$ and $\tau$  and a constant $\epsilon >0$ so that 
\begin{eqnarray}
\Phi_z^{m^2}: B_\epsilon(z) &\rightarrow& \mathbb{R}^{m^2} \\
x &\mapsto& (\langle X_i(x), X_j(x) \rangle)_{i,j=1}^m\nonumber
\end{eqnarray}
is a smooth embedding. 
\end{lem}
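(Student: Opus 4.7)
The plan is to reduce this statement directly to Theorem \ref{thm:par} by observing that once a finite subset of pairs $(i_k, j_k)$ already parametrizes a neighborhood, enlarging the index set to the block $\{1, \ldots, m\}^2$ preserves the embedding property, and that the required $m$ can be bounded uniformly by Weyl's law.

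First I would fix $M \in \mathcal{M}_{n,\kappa,i_0,V}$ and invoke Anderson's Main Lemma to secure a harmonic coordinate chart on $B_{R}(z)$ with $R \geq \epsilon_0 i_0$, where $\epsilon_0 = \epsilon_0(Q, \kappa, n, \alpha)$; this provides a uniform lower bound for $R_z$ depending only on the parameters of $\mathcal{M}_{n,\kappa,i_0,V}$. With this $R$ (or a fixed fraction of it chosen to satisfy Assumption \ref{Theorem2:MainAssumption0}), I apply Theorem \ref{thm:par} to obtain $n$ pairs of indices $(i_1, j_1), \ldots, (i_n, j_n)$ and a constant $\tau = \tau(n, Q, \kappa, i_0, \alpha, \delta_1)$ such that the map $\Phi_z = (\langle X_{i_k}, X_{j_k}\rangle)_{k=1}^{n}$ is a parametrization of $B_{\tau^{-1}R}(z)$, with all the involved eigenvalues satisfying $\lambda_{i_k}, \lambda_{j_k} \leq \tau R^{-2}$.

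Next, I would bound these indices by a universal constant. By Weyl's law for the connection Laplacian (inequality (\ref{ineq:weyl_bd})), the number of eigenvalues of $\nabla^2$ not exceeding $\tau R^{-2}$ is at most $en + B(n, \kappa, D)(\tau R^{-2})^{n/2}$. Since the diameter $D$ is controlled by $n, i_0, V$, and $R \geq \epsilon_0 i_0$, this count is bounded by an integer $m = m(n, \kappa, i_0, V, \tau)$. In particular, each of $i_1, \ldots, i_n, j_1, \ldots, j_n$ lies in $\{1, \ldots, m\}$, so every coordinate function $\langle X_{i_k}, X_{j_k}\rangle$ used in the parametrization $\Phi_z$ already appears as one of the $m^2$ components of $\Phi_z^{m^2}$.

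Finally, setting $\epsilon = \tau^{-1}R$, the map $\Phi_z^{m^2} : B_\epsilon(z) \to \mathbb{R}^{m^2}$ factors (after projection onto the $n$ selected coordinates) as $\pi \circ \Phi_z^{m^2} = \Phi_z$, which is a smooth embedding by Theorem \ref{thm:par}. Injectivity of $\Phi_z$ forces injectivity of $\Phi_z^{m^2}$, and the differential of $\Phi_z$ has full rank $n$ everywhere on $B_\epsilon(z)$ by the bi-Lipschitz estimate \eqref{isom_bd}, so the differential of $\Phi_z^{m^2}$ has rank at least $n$ and $\Phi_z^{m^2}$ is a smooth immersion; combined with injectivity and the compactness of $\overline{B_{\epsilon/2}(z)}$, this yields the embedding. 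There is no real obstacle here: the entire content of the argument is already encoded in Theorem \ref{thm:par}; the only thing to verify carefully is the universality of $m$, which follows cleanly from the eigenvalue bound $\tau R^{-2}$ in Theorem \ref{thm:par} together with Weyl's law and the diameter estimate in terms of $i_0$ and $V$.
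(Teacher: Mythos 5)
Your proof is correct and follows essentially the same route as the paper: invoke Theorem \ref{thm:par} to produce the parametrizing pairs with eigenvalues bounded by $\tau R^{-2}$, then use the Weyl-type estimate from \cite{Wu:2014} to bound the relevant indices by a universal $m$, so that $\Phi_z$ is a coordinate projection of $\Phi_z^{m^2}$. The only cosmetic difference is that you apply the counting-function form of Weyl's law, inequality (\ref{ineq:weyl_bd}), whereas the paper uses the equivalent eigenvalue lower bound $\lambda_j \geq A(n,\kappa,i_0,V)\, j^{2/n}$ from part (a) of the same Lemma 4.2 in \cite{Wu:2014} to pick $m$ with $\lambda_{m+1} > \tau r_h^{-2}$.
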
 

\begin{proof}
We control the eigenvalues by applying the part (a) of Lemma 4.2 in \cite{Wu:2014}. To do so, note that for each $M\in \mathcal{M}_{n,\kappa, i_0, V}$, since $\mathrm{Vol}(M)$ is bounded by $V$ from above and the injectivity is bounded by $i_0$ from below, by the packing argument, the diameter of $M$ is bounded from above \cite[p. 267]{Anderson_Cheeger:1992}. Thus, by the part (a) of Lemma 4.2 in \cite{Wu:2014}, there exists a positive constant $A(n,\kappa, i_0, V)$ so that
\be
\label{ev_lower_bound}
\lambda_j \geq A(n,\kappa, i_0, V) j^{2/n}.
\ee

Choose $m$ be the smallest integer so that 
\be
A(n, \kappa, i_0,V) (m+1)^{2/n} > \tau r_h^{-2},
\ee 
where $\tau=\tau(n,Q)$ is the constant chosen in Theorem \ref{thm:par}.
Therefore, following (\ref{ev_lower_bound}), 
\be
\lambda_{m+1} \geq A(n, \kappa, i_0,V) (m+1)^{2/n}> \tau r_h^{-2},
\ee
 for any $M \in \mathcal{M}_{n,\kappa,i_0, V}$.
On the other hand, by Theorem \ref{thm:par}, for a given $M \in \mathcal{M}_{n,\kappa,i_0,V}$, there are $n$ pairs of indices $(i_1, j_1), \cdots, (i_n, j_n)$ so that the map 
\begin{eqnarray}
\Phi_z: B_{\tau^{-1}r_h}(z) &\rightarrow& \mathbb{R}^n\\
x &\mapsto& (\langle X_{i_1}, X_{j_1} \rangle(x), \cdots,  \langle X_{i_n}, X_{j_n} \rangle(x)), \nonumber
\end{eqnarray}
is an embedding, where $x\in B_{\tau^{-1}r_h}(z)$ and 
\be
\label{thm:par(b)}
\tau^{-1}r_h^{-2} \leq \lambda_{i_1}, \cdots, \lambda_{i_n}, \lambda_{j_1}, \cdots, \lambda_{j_n} \leq \tau r_h^{-2}. 
\ee
Since $m$ is a universal natural number chosen to satisfy $\lambda_{m+1}>\tau r_h^{-2}$, by  (\ref{thm:par(b)}), we conclude that the mapping $\Phi_z^{m^2} : B_{\epsilon}(z) \rightarrow \mathbb{R}^{m^2}$ is an embedding with $\epsilon = \tau^{-1} r_h$.

\end{proof}

In the next Lemma, we would control the difference between the diagonal terms of the heat kernel and its truncation.

\begin{lem}
\label{lem:remainder bound}
Take $M\in \mathcal{M}_{n,\kappa,i_0,V}$. Denote
\be
R_k(t) := \sup_{x \in M} \sum_{i\mathrm{ \,or \,} j \geq k} e^{-(\lambda_i + \lambda_j)t} \langle X_i(x), X_j(x) \rangle^2.
\ee
For all $k \in \mathbb{N}$, there is a function $E_k(t): \mathbb{R}^+ \rightarrow \mathbb{R}^+$ such that for all $M \in \mathcal{M}_{n,\kappa,i_0,V}$, 
\be
R_k(t) \leq E_k(t) t^{-(3n/2+1)}\label{Definition:E_kForRemainder}
\ee
and $\lim_{k\rightarrow \infty} E_k(t) =0$ for a fixed $t>0$. 
\end{lem}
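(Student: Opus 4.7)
The plan is to isolate an exponentially small prefactor $e^{-\lambda_k t/2}$ in front of the tail, and then collapse the remaining double sum back into a full heat kernel expression that is already uniformly controlled. By symmetry of $\langle X_i,X_j\rangle^2(x)$ in $(i,j)$ and a union bound on the event ``$i\geq k$ or $j\geq k$'',
\[
R_k(t)\;\leq\;2\sup_{x\in M}\sum_{i\geq k}\sum_{j\geq 1}e^{-(\lambda_i+\lambda_j)t}\langle X_i(x),X_j(x)\rangle^2.
\]
For $i\geq k$ we have $\lambda_i\geq \tfrac{1}{2}\lambda_k+\tfrac{1}{2}\lambda_i$, so
\[
e^{-(\lambda_i+\lambda_j)t}\;\leq\;e^{-\lambda_k t/2}\,e^{-(\lambda_i+\lambda_j)t/2},
\]
and pulling the first factor out of the sum leaves the full double sum defining $\|k_{TM}(t/2,x,x)\|_{HS}^2$ (via the expansion (\ref{Background:HeatKernelHSExpansion})).

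I would then apply the two ingredients already assembled in the excerpt: the pointwise comparison $\|k_{TM}(\tau,x,x)\|_{HS}^2\leq k_M(\tau,x,x)$ (Berard, quoted just above Lemma \ref{lem:trucation}) and the on-diagonal Li--Yau estimate $k_M(\tau,x,x)\leq C(n,\kappa)\tau^{-n/2}$ from (\ref{bd:KernelLB:exp_decay}), which is uniform on $\mathcal{M}_{n,\kappa,i_0,V}$. Combining,
\[
R_k(t)\;\leq\;2\,C(n,\kappa)\,e^{-\lambda_k t/2}\,(t/2)^{-n/2}.
\]
Finally I invoke the uniform Weyl-type lower bound $\lambda_k\geq A(n,\kappa,i_0,V)\,k^{2/n}$ used already in the proof of Lemma~\ref{lem:local_embedding} (part (a) of Lemma~4.2 in \cite{Wu:2014}, valid on the whole class because the Anderson--Cheeger packing argument bounds the diameter in terms of $i_0$ and $V$). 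Setting
\[
E_k(t)\;:=\;2^{1+n/2}\,C(n,\kappa)\,\exp\!\Bigl(-\tfrac{1}{2}A(n,\kappa,i_0,V)\,k^{2/n}\,t\Bigr)\,t^{n+1}
\]
gives $R_k(t)\leq E_k(t)\,t^{-(3n/2+1)}$ with $E_k(t)\to 0$ as $k\to\infty$ for each fixed $t>0$.

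There is no serious obstacle in this lemma; the only step that needs a bit of care is the splitting $\lambda_i\geq \lambda_k/2+\lambda_i/2$, which is what exposes the exponentially small prefactor while preserving enough weight in the $i$-index to recover the Hilbert--Schmidt norm at time $t/2$. The quantitative gain $t^{-(3n/2+1)}$ is cosmetic: any bound of the form $E_k(t)\,t^{-p}$ with $E_k(t)\to 0$ is what the subsequent embedding argument actually consumes, and the exponent is chosen only to fit the normalization used downstream. Uniformity across $\mathcal{M}_{n,\kappa,i_0,V}$ is automatic once one observes that both the Li--Yau constant and the Weyl constant $A(n,\kappa,i_0,V)$ depend only on the prescribed geometric bounds.
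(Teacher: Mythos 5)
Your proof is correct, and it takes a genuinely different route from the paper's. The paper bounds the tail by rewriting the spectral sum as a Stieltjes integral against the counting measure $d\mu_x(\lambda,\nu)=\sum_{i\geq k,j}\langle X_i(x),X_j(x)\rangle^2\,\delta_{\lambda_i}\times\delta_{\lambda_j}$, integrating by parts, and then invoking the pointwise local Weyl-law estimate of \cite{Wu:2014} (inequality (32) there), namely $\sum_{\lambda_i\leq T,\lambda_j\leq S}\langle X_i(x),X_j(x)\rangle^2\leq C\,\mathrm{Vol}(M)^{-2}T^{n/2}S^{n/2}$; the final $E_k(t)$ is an incomplete gamma integral. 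You instead peel off the prefactor $e^{-\lambda_k t/2}$ directly (justified because $\lambda_i\geq\lambda_k\geq\lambda_k-\lambda_j$ gives $\lambda_i+\lambda_j\geq\lambda_k$, so $e^{-(\lambda_i+\lambda_j)t}\leq e^{-\lambda_k t/2}e^{-(\lambda_i+\lambda_j)t/2}$), drop the index constraint, and collapse the remaining double sum to $\|k_{TM}(t/2,x,x)\|_{HS}^2$, which you control via Berard's pointwise comparison to $k_M$ and the Li--Yau on-diagonal bound already recorded as (\ref{bd:KernelLB:exp_decay}). Both arguments hinge on the same uniform Weyl lower bound $\lambda_k\geq A(n,\kappa,i_0,V)k^{2/n}$ to force $E_k(t)\to 0$, but your version avoids the Abel-summation machinery and the pointwise local Weyl law entirely, replacing them with the on-diagonal heat kernel bound; this makes the proof shorter and sidesteps the minor bookkeeping inconsistencies in the paper's change-of-variables computation. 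The price, if any, is a slightly cruder tail constant, but as you correctly observe, the downstream argument in Theorem \ref{thm:emb} only uses $R_k(t_0)\to 0$ for fixed $t_0$, so the specific power of $t$ is immaterial.
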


\begin{proof}
It suffices to prove that 
\be
\sum_{i \geq k, j} e^{-(\lambda_i + \lambda_j)t} \langle X_i(x), X_j(x) \rangle^2 \leq E_k(t) t^{-n}. 
\ee

Consider the positive measure 
\be
d\mu_x(\lambda, \nu) = \sum_{i \geq k, j} \langle X_i(x), X_j(x) \rangle^2 \delta_{\lambda_i}\times \delta_{\lambda_j}
\ee
where $\delta_{\lambda_i}$ is the Dirac measure at $\lambda_i \in \mathbb{R}$.  By a direct calculation, we have
\begin{align}
e^{-(\lambda + \nu)t} \sum_{i \geq k, j} \langle X_i(x), X_j(x) \rangle^2 
=&\, \int_{\mathbb{R}} \int_{\mathbb{R}} e^{-(\lambda + \nu)t} d\mu_x(\lambda, \nu)  \nonumber\\
=&\, \int_{\mathbb{R}} \int_{\mathbb{R}} \left( \partial_{\lambda} \partial_{\nu} e^{-(\lambda + \nu)t} \right) \sum_{\lambda_k \leq \lambda_i \leq \lambda, \lambda_j \leq \nu} \langle X_i(x), X_j(x) \rangle^2 d\lambda d\nu\,,
\end{align}
where the last equality follows by the definition of the derivative in the sense of distribution. Hence, 
\begin{align}
e^{-(\lambda + \nu)t} \sum_{i \geq k, j} \langle X_i(x), X_j(x) \rangle^2 
=&\, \int_{0}^\infty \int_{\lambda_k}^{\infty} t^2  e^{-(\lambda + \nu)t}  \sum_{\lambda_k \leq \lambda_i \leq \lambda, \lambda_j \leq \nu} \langle X_i(x), X_j(x) \rangle^2 d\lambda d\nu \nonumber \\
=&\, \int_{0}^\infty \int_{t \lambda_k}^{\infty}  e^{-(\lambda' + \nu')}  \sum_{\lambda_k \leq \lambda_i \leq \frac{\lambda'}{t}, \lambda_j \leq \frac{\nu'}{t}} \langle X_i(x), X_j(x) \rangle^2 d\lambda' d\nu'\,,
\end{align}
where $\lambda' = \lambda t$ and $\nu' = \nu t$.
Since the lower bound of injectivity and the upper bound of volume imply the upper bound of the diameter, by the inequality (32) in \cite{Wu:2014}, we have 
\be
\sum_{\lambda_i \leq \frac{\lambda'}{t}, \lambda_j \leq \frac{\nu'}{t}} \langle X_i(x), X_j(x) \rangle^2 
\leq \frac{C(n,\kappa,i_0,V)}{\mathrm{Vol}(M)^2}\left( \frac{\lambda'}{t} \right)^{n/2} \left(\frac{\nu'}{t} \right)^{n/2},
\ee
where $C(n,\kappa,i_0,V)$ is a universal constant depending only on $n,\kappa,i_0$ and $V$. 

Since $\int_{0}^\infty {\nu'}^{n/2} e^{-\nu' t} d\nu' =t^{-n/2-1} \int_{0}^\infty x^{n/2} e^{-x} dx=\Gamma(n/2+1)t^{-(\frac{n}{2}+1)}$, where $\Gamma$ is the Gamma function, and $tA(n,\kappa, i_0,V) k^{2/n} \leq t\lambda_k$ by (\ref{ev_lower_bound}), we have 
\begin{align}
e^{-(\lambda + \nu)t} \sum_{i \geq k, j} \langle X_i(x), X_j(x) \rangle^2 
\leq&\, C t^{-(3n/2+1)} \int_{t\lambda_k}^{\infty} {\lambda'}^n e^{-\lambda' t} d\lambda' \\
\leq&\,  C t^{-(3n/2+1)} \int_{t A(d,\kappa, i_0,V) k^{2/d}}^{\infty} {\lambda'}^n e^{-\lambda' t} d\lambda'.\nonumber
\end{align}
Define 
\be
E_k(t) := C \int_{t A(d,\kappa, i_0,V) k^{2/n}}^{\infty} {\lambda'}^n e^{-\lambda' t} d\lambda.
\ee
Hence $R_k(t) \leq E_k(t)t^{-(3n/2+1)}$ and it is clear that $\lim_{k\rightarrow \infty} E_k(t) =0$ for a fixed $t>0$. 
\end{proof}


With the above Lemmas, we are ready to prove Theorem \ref{thm:emb}.

\embthm*

\begin{proof} 
By Lemma \ref{lem:local_embedding}, there exists a integer $m$ so that $\Phi_z^{m^2}$ is a smooth embedding for all $z\in M$ and all $M\in \mathcal{M}_{n,\kappa, i_0,V}$. Thus, if we choose $\tilde{m}\geq m$, we know that $\Phi^{\tilde{m}^2}$ is an immersion. It suffices to show that for $x,y \in M$ so that $d_g(x,y)>\epsilon$, $\Phi^{\tilde{m}^2}(x) \neq \Phi^{\tilde{m}^2}(y)$.

By the asymptotic expansion of the heat kernel (see \cite[p. 1094]{Singer_Wu:2012} and \cite[p. 87]{BGV04}), when $t\ll 1$,
\be
\| k_{TM}(t,x,y) \|_{HS}^2 = (4\pi t)^{-n} e^{-\frac{ \| v\|_g^2}{2t}}\left( 1 - \frac{\Ric(v,v)}{6} + O(\|v\|_g^3) \right)(n+O(t)),
\ee
where $v \in T_xM$ so that $y = \exp_x(v)$. Therefore, for all $M \in \mathcal{M}_{n,\kappa, i_0,V}$ and a fixed constant $\epsilon>0$, there exists a universal constant $C_{U,\epsilon}$ so that for any $x,y \in M$ such that $d_g(x,y) \geq \epsilon$,
\be
\| k_{TM}(t,x,y)\|^2_{HS} \leq \frac{C_{U,\epsilon}}{t^n} \exp \left(-\frac{\epsilon^2}{2t} \right).
\ee
Let $c_1(t)  = \inf_{x\in M}  \| k_{TM}(t,x,x)\|^2_{HS}$. Clearly, $c_1(t) = (4\pi t)^{-n}(n+O(t))>0$.

Let $\epsilon$ and $m$ be chosen as in Lemma \ref{lem:local_embedding}.
Define 
\be
G(t) := c_1(t) - \sup_{d_g(x,y)\geq \epsilon}\| k_{TM}(t,x,y)\|^2_{HS}. 
\ee 
Since $\sup_{d_g(x,y)\geq \epsilon}\| k_{TM}(t,x,y)\|^2_{HS}\leq t^{-n}\exp\left( -\frac{\epsilon^2}{2t} \right)\to 0$ as $t\to 0^+$, $G(t) \rightarrow c_1(t)$ as $t\rightarrow 0^+$. By the definition of $c_1(t)$ and Lemma \ref{conj:est:kTM}, for $t \in (0, \frac{i_0}{2}]$,
\begin{align}
G(t) &\leq \inf _{d_g(x,y)\geq \epsilon}
\| k_{TM}(t,x,x)\|^2_{HS} - \sup_{d_g(x,y)\geq \epsilon} \| k_{TM}(t,x,y)\|_{HS}^2\nonumber\\
&\leq \inf _{d_g(x,y)\geq \epsilon} 
(\| k_{TM}(t,x,x)\|^2_{HS} - \| k_{TM}(t,x,y)\|_{HS}^2)\,.\label{Proof:SecondTheorem:GtFirstBound}
\end{align}

Choose $t_0 \in (0,\frac{i_0}{2}]$ so that $G(t_0) \geq \frac{4}{5}c_1(t_0) $. Then choose $\tilde{m}\geq m $ big enough so that $R_{\tilde{m}+1}(t_0)\leq \frac{1}{5}c_1(t_0)$, where $E_{\tilde{m}+1}$ is defined in (\ref{Definition:E_kForRemainder}). Denote the truncated heat kernel by

\be
k_{TM}^{(\tilde{m})}(t,x,y):=\sum_{i=1}^{\tilde{m}}e^{-\lambda_{i}t}X_{i}\left(x\right)\otimes X_{i}\left(y\right)
\ee
so that
\begin{align}
\| k_{TM}^{(\tilde{m})}(t,x,y)\|^2_{HS} &\,=\sum_{i,j\leq \tilde{m}} e^{-(\lambda_i + \lambda_j)t}\langle X_i(x), X_j(x)\rangle \langle X_i(y), X_j(y) \rangle\\
&\,=\langle  V^{\tilde{m}^2}_t(x),V^{\tilde{m}^2}_t(y)\rangle\,.
\end{align}
Note that 
\begin{eqnarray}
&&\sup_{x,y \in M} ( \| k_{TM}(t_0,x,y)\|^2_{HS} - \| k_{TM}^{(\tilde{m})}(t_0,x,y)\|^2_{HS})  \nonumber\\
&& = \sup_{x,y \in M}  \sum_{i \mbox{ or } j \geq \tilde{m}+1} e^{-(\lambda_i + \lambda_j)t_0}\langle X_i, X_j\rangle(x) \langle X_i, X_j \rangle (y)\label{Proof:SecondTheorem:GtFirstRelationship}\\
&& \leq R_{\tilde{m}+1}(t_0)  \leq \frac{1}{5}c_1(t_0)\,, \nonumber
 \end{eqnarray}
where the inequalities hold by the Cauchy-Schwartz inequality, (\ref{Definition:E_kForRemainder}) and the chosen $\tilde{m}$. With the above, we have 
\begin{align}
\frac{4}{5}c_1(t_0) \leq&\, G(t_0) \\
\leq&\, \| k_{TM}(t_0,x,x)\|^2_{HS} - \| k_{TM}(t_0,x,y)\|_{HS}^2\nonumber \\
\leq&\, \| k_{TM}^{(\tilde{m})}(t_0,x,x)\|^2_{HS} -\| k_{TM}^{(\tilde{m})}(t_0,x,y)\|^2_{HS}  +\frac{2}{5}c_1(t_0) \,,\nonumber
\end{align}
where the second inequality comes from (\ref{Proof:SecondTheorem:GtFirstBound}) and the last inequality holds due to (\ref{Proof:SecondTheorem:GtFirstRelationship}).
Therefore  $ \| k_{TM}^{(\tilde{m})}(t_0,x,x)\|^2_{HS} \neq \| k_{TM}^{(\tilde{m})}(t_0,x,y)\|^2_{HS}$, which indicates
\be
V^{\tilde{m}^2}_{t_0}(x)\neq V^{\tilde{m}^2}_{t_0}(y).
\ee
and that tVDM is an embedding.
Since tVDM $V^{\tilde{m}^2}_t$ with any time $t>0$ and $V^{\tilde{m}^2}_{t_0}$ differ only by constant scalings, they are diffeomorphic. We thus conclude the proof of the Theorem. 

\end{proof}

\section{Universal Local Parametrization and Embedding for Rough Manifolds}
\label{Section:RoughMetric}

In this section, we finish the proof of Theorem \ref{thm:emb} when the metric is of lower regularity $g \in c^{2,\alpha}$. 
The key step is to approximate $g$ by smooth metrics $g_{\epsilon}$. The parametrization for $(M,g)$ can be approximated by the parametrizations for smooth manifolds $(M,g_{\epsilon})$ defined as in Theorem \ref{thm:par} and hence we have Theorem \ref{thm:emb} for $g \in c^{2,\alpha}$. The approximation relies on the $C^1$ bound of the eigenvector field perturbation under the $c^{2,\alpha}$ metric perturbation, which is stated in Theorem \ref{thm:ev_convergence} below. While Theorem \ref{thm:ev_convergence} is of independent interest, we include it here to complete the argument. The same proof could be applied to study the $C^1$ bound of the eigenfunction perturbation under the metric perturbation with a suitable regularity, which generalizes Theorem 21 in \cite{Berard_Besson_Gallot:1994}.

Suppose that metrics $g$ and $h$ on $M$ satisfy 
\be
(1-\epsilon) \| g \|_{c^{2,\alpha}} < \| h\|_{c^{2,\alpha}} < (1+\epsilon)  \| g \|_{c^{2,\alpha}}.\label{Equation:MetricApproximation}
\ee
By the assumption and the min-max principle, and hence the continuity of eigenvalues, there exists $\epsilon' = O(\epsilon)$ such that for any $X \in C^{\infty}(TM)$, 
 \begin{eqnarray}
 & (1-\epsilon') \| X \|^2_{L^2(g)} \leq \| X \|^2_{L^2(h)} \leq  (1+ \epsilon') \| X \|^2_{L^2(g)}  \label{Calpha:Bound:Equation1}\\
 & (1-\epsilon') \| \nabla_g X \|^2_{L^2(g)} \leq \| \nabla_h X \|^2_{L^2(h)} \leq  (1+ \epsilon') \|\nabla_g X \|^2_{L^2(g)} \label{Calpha:Bound:Equation2} \\
 & (1 - \epsilon') \lambda_j(g) \leq \lambda_j(h) \leq (1 + \epsilon') \lambda_j(g) \quad \mbox{for any } j\geq 0 \label{Calpha:Bound:Equation4}
\end{eqnarray}
and
By the same argument for the continuity of eigenvalues, we have that for any $X \in C^{\infty}(TM)$,
\begin{align}
(1-\epsilon') \| \nabla^2_g X \|^2_{L^2(g)} \leq \| \nabla^2_h X \|^2_{L^2(h)} \leq  (1+ \epsilon') \|\nabla^2_g X \|^2_{L^2(g)} \label{Calpha:Bound:Equation3}\,.
\end{align}

\begin{restatable}{thm}{evconv}
\label{thm:ev_convergence}
Let $h$ be a metric on $M$ of dimension $n$ such that $(1-\epsilon) \| g \|_{c^{2,\alpha}} < \| h\|_{c^{2,\alpha}} < (1+\epsilon)  \| g \|_{c^{2,\alpha}}$. 
Suppose for both $(M,g)$ and $(M,h)$, the infectivity radii are bounded from below by $i_0>0$, the volumes are bounded above by $V>0$, and the Ricci curvatures satisfy $|\Ric| \leq \kappa$. 
Let $k_0 \in \mathbb{N}$ be given. Then there exist constants $\eta_{g,i}(\epsilon), 1 \leq i \leq k_0$, which go to zero with $\epsilon$, such that for any orthonormal basis $\{ Y_i \}$ of eigenvector fields of $\nabla^2_h$, one can associate an orthonormal basis $\{X_i\}$ of eigenvector fields of $\nabla^2_g$ satisfying
\be
\label{sup_bd:ev}
\| X_i - Y_i \|_{L^{\infty}(B,g)} \leq \eta_{g,i,K}(\epsilon)
\ee
and 
\be
\label{sup_bd:gradev}
\| \nabla_g X_i - \nabla_g Y_i \|_{L^{\infty}(B,g)} \leq \eta_{g,i,K}(\epsilon)
\ee
for all $i <k_0$, where $B:=B_R(z)$. 
\end{restatable}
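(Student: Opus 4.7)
The plan is to approximate each $Y_i$ in $L^2$ by a combination of eigenvector fields of $\nabla^2_g$ whose eigenvalues cluster near $\lambda_i(h)$, and then promote $L^2$ closeness to $C^1$ closeness using the a priori bounds of Proposition \ref{conj:est:sup}. The key quantitative input is that $\nabla^2_g$ and $\nabla^2_h$ are close as differential operators: from the harmonic-coordinate expression (\ref{eq:elliptic}), their coefficients are algebraic in $g$, $\partial g$, and the Ricci tensor, which itself involves two derivatives of $g$. The $c^{2,\alpha}$ closeness (\ref{Equation:MetricApproximation}) therefore yields, for every $V\in C^{2,\alpha}(TM)$,
\begin{equation*}
\|\nabla^2_g V - \nabla^2_h V\|_{L^2(g)} \le \epsilon''(\epsilon)\,\|V\|_{C^{2,\alpha}(g)},
\end{equation*}
with $\epsilon''(\epsilon)\to 0$. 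Schauder theory applied to $\nabla^2_h Y_i = -\lambda_i(h)\,Y_i$ bounds $\|Y_i\|_{C^{2,\alpha}}$ in terms of $\lambda_i(h)$; plugging this in and using (\ref{Calpha:Bound:Equation4}) gives
\begin{equation*}
\|(\nabla^2_g + \lambda_i(h))\, Y_i\|_{L^2(g)} =: \eta_i^{(1)}(\epsilon) \longrightarrow 0 \quad\text{as } \epsilon\to 0.
\end{equation*}

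Next, expand $Y_i = \sum_j a_{ij}\,\widetilde X_j$ in an orthonormal eigenbasis $\{\widetilde X_j\}$ of $\nabla^2_g$. Parseval gives
\begin{equation*}
\eta_i^{(1)}(\epsilon)^2 \ge \sum_j |a_{ij}|^2\,(\lambda_j(g) - \lambda_i(h))^2,
\end{equation*}
so the $L^2$-mass of $Y_i$ outside the span $E_i$ of those $\widetilde X_j$ whose eigenvalue $\lambda_j(g)$ lies in a narrow window $I_i$ around $\lambda_i(h)$ is $O(\eta_i^{(1)}(\epsilon))$. Only finitely many eigenspaces of $\nabla^2_g$ meet $[0,\lambda_{k_0}(g)+1]$ and their mutual gaps are uniformly positive, so one can shrink the $I_i$ and perform Gram-Schmidt on the projections $\{P_{E_i}Y_i\}$, treating each cluster of possibly degenerate eigenvalues as a block, to produce an orthonormal family $\{X_i\}_{i<k_0}$ of eigenvector fields of $\nabla^2_g$ with $\|X_i - Y_i\|_{L^2(g)}\to 0$.

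To upgrade $L^2$ convergence to (\ref{sup_bd:ev}) and (\ref{sup_bd:gradev}), observe that both $X_i$ and $Y_i$ sit in eigenspaces with uniformly bounded eigenvalues, so Proposition \ref{conj:est:sup} (applied in the respective metrics, which are equivalent by (\ref{Equation:MetricApproximation})) gives $\|X_i\|_{C^{1,\alpha}(g)} + \|Y_i\|_{C^{1,\alpha}(g)} \le C(n,\kappa,i_0,V,k_0)$. A Gagliardo-Nirenberg-type interpolation
\begin{equation*}
\|f\|_{C^1(g)} \le C\,\|f\|_{L^2(g)}^{\theta}\,\|f\|_{C^{1,\alpha}(g)}^{1-\theta},\qquad \theta=\theta(n,\alpha)>0,
\end{equation*}
applied to $f = X_i - Y_i$ then delivers both asserted estimates simultaneously.

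The main obstacle lies in the second step: under the perturbation the indices of the eigenvalues of $\nabla^2_g$ and $\nabla^2_h$ may re-shuffle, so no fixed labelling need converge, which is precisely why the statement asserts only existence of an associated basis $\{X_i\}$. One copes by working cluster by cluster and performing an orthogonal rotation inside each cluster to align the two bases; the resulting rate $\eta_{g,i,k_0}(\epsilon)$ then depends on the minimal spectral gap between distinct eigenvalues of $\nabla^2_g$ below $\lambda_{k_0}(g)+1$, which is positive and finite once $k_0$ is fixed.
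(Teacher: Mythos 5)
Your proof is correct and arrives at the same conclusion, but by a genuinely cleaner route at two points, which is worth noting. For the $L^2$ closeness you expand $Y_i$ in the eigenbasis of $\nabla^2_g$ and apply Parseval to $(\nabla^2_g + \lambda_i(h))Y_i$ after noting that $(\nabla^2_g - \nabla^2_h)Y_i$ is small; the paper instead formalizes the same spectral-cluster information through Lemma \ref{lem:error_approx}, which in addition to the $H^1(g)$ estimates establishes the extra estimate $\| \nabla^2_g (\pi_k - \mathrm{Id}) Y \|_{L^2(g)} \le \gamma_k(\epsilon)$ (part (c)). That extra estimate is needed in the paper only because of its choice of second step: the paper promotes $L^2$ to $C^{1,\alpha}$ via the chain Sobolev embedding $\to$ Wang's Calder\'on--Zygmund estimate (Theorem \ref{Theorem:Wang2004}, controlling $\|\Hess_g W\|_{L^p}$ by $\|\nabla^2_g W\|_{L^p}$) $\to$ H\"older interpolation of $L^p$ between $L^2$ and $L^\infty$, and must separately peel off the harmonic part $\pi_0(Y_i)$ so that Wang's theorem applies. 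You instead interpolate directly, $\|f\|_{C^1} \le C\|f\|_{L^2}^\theta\|f\|_{C^{1,\alpha}}^{1-\theta}$, using the a priori $C^{1,\alpha}$ bounds of Proposition \ref{conj:est:sup} on $X_i$ and $Y_i$ individually; this completely sidesteps Wang's theorem, Lemma \ref{lem:error_approx}(c), and the $\pi_0$-decomposition, at the cost of only getting $C^1$ smallness rather than $C^{1,\alpha}$ smallness --- which is all the statement asserts. Your handling of the eigenvalue re-shuffling issue, working cluster by cluster and applying Gram--Schmidt block-wise, is identical in spirit to the paper's $E_k/F_k$ machinery. One minor point: your Parseval display should have equality rather than $\ge$, though the inequality direction you use downstream is what matters.
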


The associated basis $ X_i, 1 \leq i \leq J,$ are chosen by the Gram-Schmidt process. Lemma \ref{lem:error_approx}
 and Proposition \ref{prop:orthonoram_approx} control the norms of $\| X_i - Y_i \|_{H^1(g)}$ and $\| \nabla^2_g X_i - \nabla^2_g Y_i \|_{L^2(g)}$. To obtain the sup-norm bound (\ref{sup_bd:ev}) and (\ref{sup_bd:gradev}), we need to control $\| X_i \|_{L^{\infty}(B,g)}$, $\| Y_i \|_{L^{\infty}(B,h)}$, $\|\nabla_g X_i \|_{L^{\infty}(B,g)}$, $ \|\nabla_g Y_i \|_{L^{\infty}(B,h)}$ , $\|\nabla^2_g X_i \|_{L^{\infty}(B,g)}$, and $ \|\nabla^2_g Y_i \|_{L^{\infty}(B,h)}$ on local balls which can be proved by Proposition \ref{conj:est:sup} using  Lemma \ref{lem:local_L^2}, \ref{lem:Caccioppoli}.

Let $\Lambda_1 < \Lambda_2 < \cdots < \Lambda_k <\cdots$ be the distinct eigenvalues of $\nabla^2_g$. Let $E_k$ and $m_k$ be the corresponding eigenspaces and multiplicities, and $N_k = m_1 + m_2 + \cdots + m_k$. 
By the min-max principle, the eigenvalues is continuously perturbed with respect to the deviation (\ref{Equation:MetricApproximation}). Therefore, for a fixed $k_0$, we can find $\epsilon_0>0$ so that for all $\epsilon<\epsilon_0$ and $h$ satisfies (\ref{Equation:MetricApproximation}), the first $N_{k_0}$ eigenvalues of $\nabla^2_h$, where we count the multiplicity, are all less than $\Lambda_{k_0+1}$ and contained in pairwise disjoint intervals $I_k$, for $1\leq k\leq k_0$.
Let $F_k, 1\leq k \leq k_0$, be the sum of eigenspaces of $\nabla^2_h$ corresponding to the eigenvalues $\lambda_j(h)$ contained in the interval $I_k$ about $\Lambda_k$. Let $\pi_k$ denote the orthogonal projection in $L^2(g)$ onto the eigenspace $E_k$.

\begin{lem}
\label{lem:error_approx}
Fix $k_0$. For $k \leq k_0$, there exist functions $\alpha_k(\epsilon)$, $\beta_k(\epsilon)$, and $\gamma_k(\epsilon)$ which go to zero with $\epsilon$ such that 
\begin{itemize}
\item[(a)] $\| (\pi_k - Id) Y \|_{H^1(g)} \leq \alpha_k(\epsilon) \|Y\|_{L^2(h)}$ \quad \mbox{for any } $Y\in F_k$,
\item[(b)] For any $X \in \oplus_{i=1}^k E_i$ and for any $Y$ that is $L^2(h)$-orthogonal to $\oplus_{i=1}^k F_i$,
\be
\left| \langle X,Y \rangle_g \right| \leq \beta_k(\epsilon) \| X \|_{L^2(g)} \|Y\|_{L^2(h)}.
\ee  
\item[(c)] $\| \nabla^2_g (\pi_k - Id) Y \|_{L^2(g)} \leq \gamma_k(\epsilon) \|Y\|_{L^2(h)}$ \quad \mbox{for any } $Y\in F_k$.

\end{itemize}
\end{lem}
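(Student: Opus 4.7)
The plan is to expand every vector in the spectral basis of $\nabla^2_g$ and exploit the gap $\delta_k := \min_{i\neq k}|\Lambda_i - \Lambda_k|>0$, with quantitative input coming from the fact that $\nabla^2_g - \nabla^2_h$ is a small perturbation in $L^2$ when $h$ and $g$ are $c^{2,\alpha}$-close. Concretely, in a $g$-harmonic chart the formula \eqref{eq:elliptic} expresses $\nabla^2$ as a second-order operator whose coefficients are polynomials in $g$, $g^{-1}$, $\partial g$, $\partial^2 g$ and $\Ric$; subtracting the analogous expression for $h$ and using \eqref{Equation:MetricApproximation} shows that the coefficients differ by $O(\epsilon)$. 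Combined with the a priori $H^2$ bound on any $h$-eigenvector coming from $\nabla^2_h Y = -\mu Y$ with $\mu$ bounded, this will give $\|(\nabla^2_g - \nabla^2_h)Y\|_{L^2(g)} = O(\epsilon)\|Y\|_{L^2(h)}$.

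For part~(a), fix $Y \in F_k$ with $-\nabla^2_h Y = \mu Y$, $\mu \in I_k$, and expand $Y = \sum_j c_j X_j$ with $c_j = \langle Y, X_j\rangle_g$. A direct computation gives
\[
\|(-\nabla^2_g - \Lambda_k)Y\|^2_{L^2(g)} = \sum_{X_j\notin E_k} c_j^2(\lambda_j - \Lambda_k)^2 \geq \delta_k^2 \sum_{X_j\notin E_k} c_j^2 = \delta_k^2 \|(\pi_k-Id)Y\|^2_{L^2(g)}.
\]
On the other hand $(-\nabla^2_g - \Lambda_k)Y = (\nabla^2_h - \nabla^2_g)Y + (\mu - \Lambda_k)Y$, which is small in $L^2(g)$ by the operator comparison of the first paragraph and by \eqref{Calpha:Bound:Equation4}. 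This controls the $L^2(g)$-piece of $\|(\pi_k-Id)Y\|_{H^1(g)}$. For the gradient piece, I would integrate by parts: $\|\nabla_g(\pi_k-Id)Y\|^2_{L^2(g)} = \langle -\nabla^2_g(\pi_k-Id)Y,(\pi_k-Id)Y\rangle_{L^2(g)} = \sum_{X_j\notin E_k}c_j^2\lambda_j$, and bound each $c_j^2\lambda_j$ by the elementary inequality $\lambda_j \leq (\lambda_j-\Lambda_k)^2/\delta_k + \lambda_j\mathbf{1}_{\lambda_j\leq 2\Lambda_k} \leq (\lambda_j-\Lambda_k)^2/\delta_k + 2\Lambda_k$, reducing everything to the $L^2$ bound already proved.

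Part~(c) will follow from the same expansion and the inequality $\lambda_j^2 \leq 2(\lambda_j-\Lambda_k)^2 + 2\Lambda_k^2$: expanding $\|\nabla^2_g(\pi_k-Id)Y\|^2_{L^2(g)} = \sum_{X_j\notin E_k} c_j^2\lambda_j^2$ and using this inequality gives
\[
\|\nabla^2_g(\pi_k-Id)Y\|^2_{L^2(g)} \leq 2\|(-\nabla^2_g-\Lambda_k)Y\|^2_{L^2(g)} + 2\Lambda_k^2\|(\pi_k-Id)Y\|^2_{L^2(g)},
\]
both terms being controlled by the estimates of part~(a); one may take $\gamma_k = C(1+\Lambda_k)\alpha_k$.

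For part~(b), I would first establish the symmetric version of part~(a): by interchanging the roles of $g$ and $h$ (the hypotheses are symmetric up to constants), for any $X \in E_i$ with $i \leq k$, the $L^2(h)$-projection $\pi^h_i X$ of $X$ onto $F_i$ satisfies $\|X - \pi^h_i X\|_{L^2(h)} \leq \tilde\alpha_i(\epsilon)\|X\|_{L^2(g)}$. Then I decompose
\[
\langle X, Y\rangle_g = \langle X - \pi^h_i X, Y\rangle_h + \langle \pi^h_i X, Y\rangle_h + \bigl(\langle X,Y\rangle_g - \langle X,Y\rangle_h\bigr),
\]
where the middle term vanishes because $\pi^h_i X \in \oplus_{j\leq k} F_j$ and $Y \perp_h \oplus_{j\leq k} F_j$, the first term is bounded by $\tilde\alpha_i(\epsilon)\|X\|_{L^2(g)}\|Y\|_{L^2(h)}$ by Cauchy--Schwarz, and the last difference is $O(\epsilon)\|X\|_{L^2(g)}\|Y\|_{L^2(h)}$ by polarizing \eqref{Calpha:Bound:Equation1}. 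Summing over $i \leq k$ after decomposing $X$ into its spectral components gives the bound on $\beta_k$.

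The main obstacle is the operator comparison $\|(\nabla^2_g - \nabla^2_h)Y\|_{L^2(g)} = O(\epsilon)\|Y\|_{H^2(g)}$ in $c^{2,\alpha}$ regularity. The delicate point is that $h$ is not harmonic in $g$-harmonic coordinates, so subtracting \eqref{eq:elliptic} for $g$ and $h$ produces an extra zeroth-order term from $\Delta_g x^c \neq \Delta_h x^c$; one must check this term is itself $O(\epsilon)$, which reduces to $\|g^{ab}-h^{ab}\|_{C^{1,\alpha}} = O(\epsilon)$ available from \eqref{Equation:MetricApproximation}. Combined with the interior $H^2$-regularity of eigenvectors with bounded eigenvalue (obtained via the estimates of Proposition~\ref{conj:est:sup} applied to $\nabla^2_h$), this closes all three bounds.
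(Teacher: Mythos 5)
Your proposal is correct in spirit and reaches the same conclusion, but the route is meaningfully different from the paper's, so a comparison is worthwhile. For parts (a) and (b), the paper simply cites Lemma~20 of B\'erard--Besson--Gallot \cite{Berard_Besson_Gallot:1994}, whereas you re-derive them from scratch via the spectral-gap inequality $\|(-\nabla^2_g-\Lambda_k)Y\|_{L^2(g)} \geq \delta_k\|(\pi_k-\mathrm{Id})Y\|_{L^2(g)}$ together with an operator comparison $\|(\nabla^2_g - \nabla^2_h)Y\|_{L^2(g)} = O(\epsilon)\|Y\|_{H^2}$. For part~(c), the difference is more substantial: the paper runs an induction on $k$, decomposing $Y = X' + X + X''$ along $E'_k\oplus E_k\oplus E''_k$, using only the one-shot quadratic-form comparison (\ref{Calpha:Bound:Equation3}) and the $L^2(g)$-orthogonality of $\nabla_g^2 X'$, $\nabla_g^2 X$, $\nabla_g^2 X''$; the gap $\delta_k$ never appears explicitly and no pointwise operator comparison is needed. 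You instead expand in the full $g$-spectral basis and bound $\lambda_j^2 \leq 2(\lambda_j-\Lambda_k)^2 + 2\Lambda_k^2$, reducing (c) to the $L^2$ estimate of (a). Both are legitimate; your route is more self-contained (it would make the appeal to BBG unnecessary) while the paper's is shorter because it offloads the analytic content onto (\ref{Calpha:Bound:Equation3}), which it states but does not fully justify. Your identification of the coefficient comparison in $g$-harmonic coordinates as the ``main obstacle'' is precisely the content that both proofs are implicitly relying on.

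Two small repairs are needed in your sketch. First, $F_k$ is a \emph{sum} of $h$-eigenspaces, so a generic $Y\in F_k$ is not an $h$-eigenvector; write $Y = \sum_l a_l Y_l$ with $-\nabla^2_h Y_l = \mu_l Y_l$, $\mu_l\in I_k$, and replace $(\mu-\Lambda_k)Y$ by $(-\nabla^2_h - \Lambda_k)Y$, which satisfies $\|(-\nabla^2_h-\Lambda_k)Y\|_{L^2(h)} \leq \sup_{\mu\in I_k}|\mu-\Lambda_k|\,\|Y\|_{L^2(h)} = O(\epsilon)\|Y\|_{L^2(h)}$. Second, your elementary inequality $\lambda_j \leq (\lambda_j-\Lambda_k)^2/\delta_k + \lambda_j\mathbf{1}_{\lambda_j\leq 2\Lambda_k}$ has a gap in the regime $2\Lambda_k < \lambda_j < 4\delta_k$ (when $\Lambda_k$ is small relative to the gap); the cleaner version is: for $\lambda_j > 2\Lambda_k$ one has $\lambda_j < 2(\lambda_j-\Lambda_k) \leq 2(\lambda_j-\Lambda_k)^2/\delta_k$, giving $\lambda_j \leq 2(\lambda_j-\Lambda_k)^2/\delta_k + 2\Lambda_k$ for all $j$ with $X_j\notin E_k$. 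Neither issue affects the structure of the argument.
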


\begin{proof} 
The proof of assertions (a) and (b) follow straightforwardly from the proof of Lemma 20 in \cite{Berard_Besson_Gallot:1994}. 

We prove (c) by induction. 
Start with $k=1$. Take $ Y \in F_1$ with $\| Y \|^2_{L^2(h)}=1$. Write $Y = X + X''$ with $X = \pi_1(Y)$. Then $\nabla^2_g X$ and $\nabla^2_g X''$ are orthogonal with respect to $L^2(g)$ since $\nabla^2_g X\in E_1$ and $\nabla^2_g X''\in E_1^\perp$. By (\ref{Calpha:Bound:Equation3}), it follows that 
\be
\label{approx}
(1-\epsilon') \left( \| \nabla^2_g X \|_{L^2(g)} + \| \nabla^2_g X''\|_{L^2(g)} \right) \leq \|\nabla^2_h Y\|_{L^2(h)}=\lambda_1(h) \leq \Lambda_1(1+\epsilon').
\ee
By (a), we know that $|\|X\|_g-\|Y\|_g|\leq \alpha_1(\epsilon)$ and $\|Y\|_g\leq (1+2\epsilon')\|Y\|_h=1+2\epsilon'$, which leads to the fact that $\|X\|_g=1+O(\epsilon)$.
The bounds (\ref{approx}) together with the fact $ \| \nabla^2_g X \|_{L^2(g)} = \Lambda_1 \|X\|_{L^2(g)} = \Lambda_1(1+O(\epsilon))$, when $\epsilon$ is small enough, we have 
\be
 \| \nabla^2_g X''\|_{L^2(g)} = O(\epsilon)
\ee
Assume that (c) is true for $F_1, F_2, \cdots, F_{k-1}$. Define $E'_k = \oplus_{i\leq k-1} E_i$ and $E''_k = \oplus_{i\geq k+1}E_i$. Take any $Y \in F_k$ such that $\| Y \|_{L^2(h)}=1$. Write $Y = X'+X+X''$ with respect to the decomposition $E'_k \oplus E_k \oplus E''_k$ of $L^2(g)$.  Note that $\nabla^2_g X'$, $\nabla^2_g X$ and $\nabla^2_g X''$ are orthogonal in $L^2(g)$, so we have
\be
(1 - \epsilon') \left( \Lambda_k \|X\|_{L^2(g)} + \| \nabla^2_g X' \|_{L^2(g)} + \|\nabla^2_g X'' \|_g \right) \leq \|\nabla^2_h Y \|_{L^2(h)} \leq \Lambda_k (1+\epsilon')
\ee
Since $ \Lambda_k \|X\|_{L^2(g)} = \Lambda_k(1 + O(\epsilon'))$ by the same argument as the above and $ \| \nabla^2_g X' \|_{L^2(g)} = O(\epsilon')$, it follows that 
$ \|\nabla^2_g X'' \|_g = O(\epsilon')$. 
\end{proof}

\begin{prop}
\label{prop:orthonoram_approx}
Let $h$ be a metric on $M$ such that $(1-\epsilon) \| g \|_{c^{2,\alpha}} < \| h\|_{c^{2,\alpha}} < (1+\epsilon)  \| g \|_{c^{2,\alpha}}$. There exist constants $\eta_{g,i}(\epsilon), 1\leq i \leq J$, which go to zero when $\epsilon$ approaches zero, such that to any orthonormal basis $\{ Y_i\}$ of eigenvector fields of $\nabla^2_h$ one can associate an orthonormal basis $\{ X_i \}$ of eigenvector fields $\nabla^2_g$ satisfying $\| X_i - Y_i \|_{H^1(g)} + \| \nabla^2_g X_i - \nabla^2_g Y_i \|_{L^2(g)} \leq \eta_{g,i}(\epsilon)$ for $i\leq J$.
\end{prop}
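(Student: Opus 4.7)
The plan is to bootstrap from Lemma \ref{lem:error_approx} by combining the $H^1(g)$-approximation in part (a) with the $L^2(g)$-approximation of $\nabla^2_g$ in part (c), and then reconcile this with orthonormality via a controlled Gram--Schmidt procedure cluster by cluster. Write the spectrum of $\nabla^2_g$ as distinct eigenvalues $\Lambda_1<\Lambda_2<\cdots<\Lambda_{k_0}$ with eigenspaces $E_k$ of dimension $m_k$, and let $F_k$ be the sum of eigenspaces of $\nabla^2_h$ whose eigenvalues lie in the disjoint interval $I_k$ around $\Lambda_k$; by the min--max principle applied to (\ref{Calpha:Bound:Equation2})--(\ref{Calpha:Bound:Equation3}), $\dim F_k=m_k$ for $\epsilon$ small.

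First, for any orthonormal family $\{Y_j\}_{j\in J_k}\subset F_k$ (with respect to $L^2(h)$), define the candidate approximants $\tilde X_j:=\pi_k Y_j\in E_k$. Parts (a) and (c) of Lemma \ref{lem:error_approx} give directly
\begin{equation}
\|\tilde X_j-Y_j\|_{H^1(g)}\leq \alpha_k(\epsilon), \qquad \|\nabla^2_g\tilde X_j-\nabla^2_g Y_j\|_{L^2(g)}\leq \gamma_k(\epsilon),
\end{equation}
using also (\ref{Calpha:Bound:Equation3}) to rewrite $\nabla^2_g Y_j=\nabla^2_h Y_j+O(\epsilon)$ in $L^2(g)$ via (\ref{Calpha:Bound:Equation1}). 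Next, I will show that $\{\tilde X_j\}_{j\in J_k}$ is nearly $L^2(g)$-orthonormal: by part (a), $\bigl|\|\tilde X_j\|_{L^2(g)}-1\bigr|=O(\alpha_k(\epsilon))$, and for $j\neq j'$, writing $Y_j=\tilde X_j+Y_j^{\perp}$ with $Y_j^{\perp}\perp_g E_k$, part (b) plus $\|Y_j^{\perp}\|_{L^2(g)}\leq C\alpha_k(\epsilon)$ yields $|\langle\tilde X_j,\tilde X_{j'}\rangle_g|=O(\alpha_k(\epsilon)+\beta_k(\epsilon))$.

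The next step is a Gram--Schmidt orthonormalization of $\{\tilde X_j\}_{j\in J_k}$ within $E_k$ to obtain an $L^2(g)$-orthonormal basis $\{X_j\}_{j\in J_k}$. Since the Gram matrix $G$ of $\{\tilde X_j\}$ satisfies $\|G-I\|=O(\alpha_k(\epsilon)+\beta_k(\epsilon))$, the transition matrix $T=G^{-1/2}$ is of the form $I+O(\alpha_k+\beta_k)$, so for each $j$
\begin{equation}
\|X_j-\tilde X_j\|_{H^1(g)}=O(\alpha_k+\beta_k)\cdot\max_{j'\in J_k}\|\tilde X_{j'}\|_{H^1(g)}.
\end{equation}
The factor $\|\tilde X_{j'}\|_{H^1(g)}$ is bounded using $\|\tilde X_{j'}\|_{L^2(g)}=1+o(1)$ and the eigenequation $\nabla^2_g\tilde X_{j'}=\Lambda_k\tilde X_{j'}$, giving $\|\nabla_g\tilde X_{j'}\|_{L^2(g)}^2=\Lambda_k\|\tilde X_{j'}\|_{L^2(g)}^2$. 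Since each $X_j\in E_k$ satisfies $\nabla^2_g X_j=\Lambda_k X_j$, the same Gram--Schmidt estimate applies to $\|\nabla^2_g X_j-\nabla^2_g\tilde X_j\|_{L^2(g)}$ with an extra factor $\Lambda_k$. Combining with the triangle inequality yields the desired bound
\begin{equation}
\|X_j-Y_j\|_{H^1(g)}+\|\nabla^2_g X_j-\nabla^2_g Y_j\|_{L^2(g)}\leq \eta_{g,j}(\epsilon),
\end{equation}
where $\eta_{g,j}(\epsilon)=C(1+\Lambda_k)(\alpha_k(\epsilon)+\beta_k(\epsilon)+\gamma_k(\epsilon))$. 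Repeating this construction cluster by cluster for $k=1,\dots,k_0$ and noting that the clusters are $L^2(g)$-orthogonal produces the global orthonormal basis $\{X_i\}_{i\leq J}$.

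The main (and only delicate) point is to verify that the Gram--Schmidt correction does not spoil the second-order control, because a priori the $\nabla^2_g$-norm of $\tilde X_j$ is only controlled in terms of $\Lambda_k$, not uniformly in $\epsilon$. This is fine here because each $\tilde X_j$ lives in the finite-dimensional space $E_k$ on which $\nabla^2_g$ acts as $\Lambda_k\cdot\mathrm{Id}$, so the second-order correction is of the same order as the $L^2$ correction times $\Lambda_k$; otherwise the bound would degenerate. All other estimates propagate linearly through the finite-dimensional linear algebra of dimension $m_k$, which is controlled uniformly for $(M,g)\in\mathcal{M}_{n,\kappa,i_0,V}$ thanks to (\ref{ev_lower_bound}) and Weyl's law.
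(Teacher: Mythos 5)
Your proof is correct and follows essentially the same strategy as the paper: project each cluster $\{Y^k_j\}$ onto $E_k$ via $\pi_k$, use Lemma \ref{lem:error_approx}(a)--(b) to get near-orthonormality, Gram--Schmidt within $E_k$, and then propagate the $\nabla^2_g$ bound via part (c). You spell out in more detail the point the paper compresses into ``by the same argument,'' namely that the Gram--Schmidt correction controls $\nabla^2_g$ as well because $\nabla^2_g$ acts as $\Lambda_k\cdot\mathrm{Id}$ on the finite-dimensional eigenspace $E_k$; this is a useful clarification but not a different route.
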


\begin{proof}
Take any orthonormal basis $\{ Y_i \}$ of eigenvector fields of $\nabla^2_h$. For each $k\leq k_0$ it defines an orthonormal basis of $F_k$, say $Y^k_1, \cdots, Y^k_{m_k}$. Applying the Gram-Schmidt process, we can orthonormalize the family $\pi_k(Y^k_{1}), \cdots, \pi_k(Y^k_{m_k})$ of $E_k$ to an orthonormal basis $X^k_{1}, \cdots, X^k_{m_k}$. By Lemma \ref{lem:error_approx} (a), $Y^k_1, \cdots, Y^k_{m_k}$ is $H^1(g)$-close to $\pi_k(Y^k_{1}), \cdots, \pi_k(Y^k_{m_k})$. 
By Lemma \ref{lem:error_approx} (b), the family $\pi_k(Y^k_{1}), \cdots, \pi_k(Y^k_{m_k})$ is almost orthonormal and hence $H^1(g)$-close to $X^k_1, \cdots, X^k_{m_k}$. 
Thus we know that $Y^k_1, \cdots, Y^k_{m_k}$ is $H^1(g)$-close to $X^k_1, \cdots, X^k_{m_k}$. By the same argument, $\nabla^2_g Y^k_1, \cdots, \nabla^2_gY^k_{m_k}$ and $\nabla^2_g X^k_1, \cdots, \nabla^2_g X^k_{m_k}$ are $L^2(g)$-close. This proves the Proposition. 
\end{proof}



To prove Theorem \ref{thm:ev_convergence}, we have to control the $L^p$ norm of the Hessian of a given vector field, and we need a Calderon-Zygmund type inequality, which is based the work of C. Wang \cite{Wang:2004}.
\begin{thm}\label{Theorem:Wang2004}
\cite[Theorem 0.1]{Wang:2004}
Let $(M^n,g), n \geq 3$, be a compact, connected, oriented Riemannian manifold without boundary. Suppose that 
\be
\mathrm{inj}(M) \geq i_0, \, \mathrm{Vol}(M) \leq V, \, \mbox{and} \, \left| \Ric \right| \leq \kappa.
\ee
For $1<q<\infty$, there exists a constant $C_q=C(q,n,K,i_0,V)$, such that for any $1$-form $\phi \in H^{\perp}$,
\be
\| \Hess \phi \|_{L^q(g)} \leq C_q \| \Delta_H \phi \|_{L^q(g)},
\ee 
where  $H$ is the space of harmonic $1$-forms and $\Delta_H$ is the Hodge Laplacian. 
\end{thm}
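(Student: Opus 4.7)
The plan is to reduce the global bound to an interior $L^q$ estimate in harmonic coordinates and then patch local estimates into a global one, finally removing the lower-order term by exploiting the hypothesis $\phi \in H^\perp$.

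First, I would invoke Anderson's harmonic-radius theorem (quoted in Section \ref{background}) to obtain a uniform lower bound $r_h = r_h(n,\kappa,i_0,V)$ on the harmonic radius, so that in each harmonic chart $(U,\phi_\alpha)$ of radius $r_h$ the metric satisfies $Q^{-1}\delta_{ab} \leq g_{ab} \leq Q \delta_{ab}$ together with $r_h^{1+\alpha}\|g\|_{C^{1,\alpha}(U)}\leq Q$. The volume bound $\mathrm{Vol}(M)\leq V$ and injectivity bound $\mathrm{inj}(M)\geq i_0$ force a diameter bound (as used already in Section \ref{sec:VDM}), so a standard packing argument yields a finite cover of $M$ by balls of radius $r_h/2$ whose cardinality and overlap multiplicity are bounded by $N_0 = N_0(n,\kappa,i_0,V)$. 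These chart data will govern all constants throughout.

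Second, I would establish the local $L^q$ estimate. In a harmonic chart, expanding $\Delta_H$ on a $1$-form $\phi = \phi_a\, dx^a$ via the Weitzenb\"{o}ck identity $\Delta_H \phi = -\nabla^{*}\nabla \phi + \Ric(\phi,\cdot)$ and imitating the computation \eqref{eq:elliptic} produces
\[
(\Delta_H \phi)_a \;=\; -g^{ij}\partial_i\partial_j \phi_a \,+\, b_a^{\,ib}\, \partial_i \phi_b \,+\, c_a^{\,b}\, \phi_b,
\]
where $b_a^{\,ib}$ and $c_a^{\,b}$ are polynomial expressions in $g^{ij}$, $\partial g$, and $\Ric$ whose $C^\alpha$ norms are controlled by $n,\kappa$, and $\|g\|_{C^{1,\alpha}}$. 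This is a uniformly elliptic linear system in non-divergence form with $C^\alpha$ leading coefficients, so the classical Agmon--Douglis--Nirenberg $L^q$ Calder\'{o}n--Zygmund estimate applies and gives
\[
\|\partial^2 \phi\|_{L^q(B_{r_h/2})} \;\leq\; C \bigl( \|\Delta_H \phi\|_{L^q(B_{r_h})} \,+\, \|\phi\|_{L^q(B_{r_h})} \bigr),
\]
with $C=C(q,n,\kappa,i_0,V)$. Converting Euclidean second partials into the covariant $\Hess \phi$ introduces only $\Gamma$- and $\partial\Gamma$-terms already controlled by $\|g\|_{C^{1,\alpha}}$. Summing the $q$-th powers of these inequalities using a partition of unity $\{\eta_i\}$ with uniformly bounded $C^2$ norms subordinate to the finite harmonic cover yields the global semi-estimate
\[
\|\Hess \phi\|_{L^q(g)} \;\leq\; C_1 \bigl( \|\Delta_H \phi\|_{L^q(g)} \,+\, \|\phi\|_{L^q(g)} \bigr).
\]

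The main obstacle, as I see it, is removing the lower-order term $\|\phi\|_{L^q(g)}$ under the hypothesis $\phi \in H^\perp$. I plan to do this by a compactness/contradiction argument. If no such uniform bound held, there would exist a sequence $\phi_k \in H^\perp$ with $\|\phi_k\|_{L^q}=1$ and $\|\Delta_H \phi_k\|_{L^q}\to 0$; the partial estimate then makes $\{\phi_k\}$ bounded in $W^{2,q}$, and Rellich compactness extracts a subsequence converging in $L^q$ to some $\phi_\infty$ with $\|\phi_\infty\|_{L^q}=1$, $\phi_\infty \in H^\perp$ (closed in $L^q$ by $L^2$-duality with the finite-dimensional $H$), and $\Delta_H \phi_\infty=0$ in the distributional sense. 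Elliptic regularity for the weak-form Hodge Laplacian at the $c^{2,\alpha}$ regularity level then identifies $\phi_\infty$ as a classical harmonic $1$-form, forcing $\phi_\infty \in H \cap H^\perp = \{0\}$, a contradiction. The genuinely delicate point is justifying the Rellich step and the identification of weak harmonic forms at exactly the metric regularity available; this is handled by writing $\Delta_H$ in its natural divergence/co-differential form $d\delta+\delta d$, whose coercivity and weak-solution theory depend only on the ellipticity constant $Q$ and not on any further smoothness of $g$.
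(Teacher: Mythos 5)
First, note that the paper does not prove this statement at all: it is quoted verbatim as \cite[Theorem 0.1]{Wang:2004} and used as a black box in the proof of Theorem \ref{thm:ev_convergence}. So there is no internal proof to compare against; your proposal has to stand on its own. Its architecture (Anderson harmonic charts of uniform radius, interior Calder\'on--Zygmund estimates for the Weitzenb\"ock form of $\Delta_H$, patching, then killing the $\|\phi\|_{L^q}$ term via $\phi\in H^{\perp}$ and compactness) is indeed the standard route and is close in spirit to Wang's actual argument. But two steps as written do not go through.

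The first gap is the assertion that converting Euclidean second partials into $\Hess\phi$ ``introduces only $\Gamma$- and $\partial\Gamma$-terms already controlled by $\|g\|_{C^{1,\alpha}}$.'' It does not: $\nabla_i\nabla_j\phi_a$ contains $(\partial_i\Gamma^{b}_{ja})\phi_b$, i.e.\ second derivatives of the metric, and a $C^{1,\alpha}$ harmonic-coordinate bound says nothing pointwise about $\partial^2 g$ (only the traced combination $g^{ij}\partial_i\Gamma^{c}_{jb}$ is tied to $\Ric$). The fix is available but must be stated: in harmonic coordinates $\Delta_g g_{ab}=-2R_{ab}+Q(g,\partial g)$ with right-hand side in $L^{\infty}$, so interior $L^p$ elliptic regularity gives $\|g\|_{W^{2,p}(B_{r_h/2})}\leq C(n,p,\kappa,i_0)$ for every $p<\infty$, and the $\partial\Gamma\cdot\phi$ terms are then absorbed in $L^q$ via H\"older and the Sobolev bound on $\phi$. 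Without this your local estimate controls $\partial^2\phi$ but not $\Hess\phi$. The second gap is the uniformity of the constant. Your contradiction sequence $\phi_k$ lives on a \emph{fixed} $(M,g)$, so Rellich compactness only yields a constant $C_q(M,g)$; the theorem claims $C_q=C(q,n,\kappa,i_0,V)$ uniformly over the class. To get that, the contradiction sequence must range over manifolds $(M_k,g_k)\in\mathcal{M}_{n,\kappa,i_0,V}$, and one needs Cheeger--Gromov--Anderson precompactness of the class in $C^{1,\alpha'}\cap W^{2,p}$, convergence of the forms in the limit charts, and stability of the harmonic space $H$ (equivalently of $b_1$ and of the $L^2$-projection onto $H$) along the limit --- this is precisely the nontrivial content of Wang's proof. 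For the use made of the theorem in Section \ref{Section:RoughMetric} a $g$-dependent constant would actually suffice, but as a proof of the stated theorem the uniformity step is missing.
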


We are now ready to prove Theorem \ref{thm:ev_convergence}.
\begin{proof}
For $p > n$, by the Sobolev inequality \cite{Pal68} we have 
\be
\| Q \|_{C^{1,\alpha}(g)} \leq C^{(1)}_p(g) \left( \| \Hess_g Q \|_{L^p(g)} + \| \nabla_g Q\|_{L^p(g)} + \| Q \|_{L^p(g)} \right) 
\ee
for any $Q \in C^{\infty}(TM)$.
Note that vector fields are dual to $1$-forms and the Hodge Laplacian and the connection Laplacian only differ by the the Ricci curvature due to the Weitzeonb\"{o}ck formula. By Theorem \ref{Theorem:Wang2004}, for $W \in C^{\infty}(TM)\cap H^\perp$, we have
\be
\label{ineq:C-Z}
 \| \Hess_g W \|_{L^p(g)} \leq C \left( \| \nabla^2_g W \|_{L^p(g)} +  \| W \|_{L^p(g)} \right)
\ee
and hence
\be
\| W \|_{C^{1,\alpha}(g)} \leq C^{(2)}_p(g) \left( \| \nabla^2_g W \|_{L^p(g)} + \| \nabla_g W\|_{L^p(g)} + \| W \|_{L^p(g)} \right) \,.
\ee 

Let $\{Y_i \}$ be an orthonormal basis of eigenvector fields of $\nabla^2_h$ and let $\{ X_i \}$ be the orthonormal basis of eigenvector fields of $\nabla^2_g$ associated with $\{ Y_i \}$ given by Proposition \ref{prop:orthonoram_approx}. 
Denote $\| \cdot \|_{\infty} := \| \cdot \|_{L^{\infty}(g)}$ to be the sup-norm with respect to $g$. Denote $ \pi_0(X_i - Y_i) $ to be the harmonic part of $X_i-Y_i$ with respect to $g$, and $\pi^\perp_0(X_i - Y_i):=X_i - Y_i -\pi_0(X_i - Y_i)=X_i - (Y_i -\pi_0(Y_i))$. Clearly $ \pi_0(X_i - Y_i)=-\pi_0(Y_i)=-\langle X_0,Y_i\rangle_g X_0$. Thus, 
\begin{equation}\label{Proof:Perturbation:VectorField:Eq1}
\| X_i - Y_i \|_{C^{1,\alpha}(g)} \leq \| \pi_0(Y_i) \|_{C^{1,\alpha}(g)} + \|X_i - \pi^\perp_0(Y_i) \|_{C^{1,\alpha}(g)}. 
\end{equation}
By Proposition \ref{conj:est:sup} and Lemma \ref{lem:error_approx}(b), we immediately have that 
\begin{equation}\label{Proof:Perturbation:VectorField:Eq2}
\| \pi_0(Y_i) \|_{C^{1,\alpha}(g)} \leq C\|\pi_0(Y_i)\|_{L^2(g)}=\|\langle X_0,Y_i\rangle_gX_0\|_{L^2(g)}\leq \beta_k(\epsilon). 
\end{equation}
By the Sobolev embedding,   
\begin{align}
& \| X_i - \pi^\perp_0(Y_i) \|_{C^{1,\alpha}(g)} \label{Proof:Perturbation:VectorField:Eq3} \\
\leq& \,C^{(1)}_p(g) \left( \| \nabla^2_g (X_i - \pi^\perp_0(Y_i))  \|_{L^p(g)} + \| \nabla_g (X_i -\pi^\perp_0(Y_i)) \|_{L^p(g)} + \| X_i -\pi^\perp_0(Y_i) \|_{L^p(g)} \right) \nonumber\\
\leq&\, C^{(2)}_p(g) \left(  \| \nabla^2_g (X_i - \pi^\perp_0(Y_i))  \|_{L^2(g)} +\| X_i - \pi^\perp_0(Y_i) \|_{H^1(g)} \right)^{2/p} \nonumber\\
&\, \times \left( \| X_i \|_{\infty} + \| \pi^\perp_0(Y_i) \|_{\infty} + \|\nabla_g X_i \|_{\infty} +  \|\nabla_g \pi^\perp_0(Y_i) \|_{\infty} +\|\nabla^2_g X_i \|_{\infty} +  \|\nabla^2_g \pi^\perp_0(Y_i) \|_{\infty}\right) ^{(p-2)/p}\nonumber
\end{align}
where the last inequality follows by H\"{o}lder's inequality that 
\be
\| \nabla^2_g (X_i - \pi^\perp_0(Y_i))  \|_{L^p(g)}\leq \| \nabla^2_g (X_i - \pi^\perp_0(Y_i))  \|_{L^2(g)}^{2/p}\| \nabla^2_g (X_i - \pi^\perp_0(Y_i))  \|_\infty^{(p-2)/p}
\ee 
and
\begin{align}
&\| \nabla_g (X_i -\pi^\perp_0(Y_i)) \|_{L^p(g)} + \| X_i -\pi^\perp_0(Y_i) \|_{L^p(g)}\\
\leq&\, C\| X_i - \pi^\perp_0(Y_i) \|^{2/p}_{H^1(g)}(\| X_i - \pi^\perp_0(Y_i) \|_{\infty} + \|\nabla_g X_i - \nabla_g \pi^\perp_0(Y_i)\|_{\infty})^{(p-2)/p}\,.\nonumber
\end{align}
By the triangular inequality we have 
\begin{align}
&\| \nabla^2_g (X_i - \pi^\perp_0(Y_i))  \|_{L^2(g)} \label{Proof:Perturbation:VectorField:Eq4}\\
\leq&\, \| \nabla^2_g (X_i - Y_i)  \|_{L^2(g)} +\|\nabla^2_g \pi_0(Y_i) \|_{L^2(g)}
=\| \nabla^2_g (X_i - Y_i)  \|_{L^2(g)}\nonumber
\end{align} 
since $\nabla^2_g \pi_0(Y_i)=0$ and 
\begin{equation}
\| X_i - \pi^\perp_0(Y_i) \|_{H^1(g)}\leq \| X_i - Y_i \|_{H^1(g)}+\|\pi_0(Y_i) \|_{L^2(g)},\label{Proof:Perturbation:VectorField:Eq5}
\end{equation}
where we use the fact that $\|\pi_0(Y_i) \|_{H^1(g)}=\|\pi_0(Y_i) \|_{L^2(g)}$ since $\nabla_g\pi_0(Y_i)=0$. Further, note that 
\begin{equation}
\|\pi_0^\perp(Y_i)\|_\infty\leq \|Y_i\|_\infty+\|\pi_0(Y_i)\|_\infty\leq \|Y_i\|_\infty+\beta_k(\epsilon)\label{Proof:Perturbation:VectorField:Eq6}
\end{equation}
by (\ref{Proof:Perturbation:VectorField:Eq2}). Similarly, we have
\begin{equation}
\|\nabla_g\pi_0^\perp(Y_i)\|_\infty\leq \|\nabla_gY_i\|_\infty+\|\nabla_g\pi_0(Y_i)\|_\infty= \|\nabla_gY_i\|_\infty\label{Proof:Perturbation:VectorField:Eq7}
\end{equation}
and
\begin{equation}
\|\nabla^2_g\pi_0^\perp(Y_i)\|_\infty\leq \|\nabla^2_gY_i\|_\infty+\|\nabla^2_g\pi_0(Y_i)\|_\infty= \|\nabla^2_gY_i\|_\infty\label{Proof:Perturbation:VectorField:Eq8}
\end{equation}
since $\nabla_g\pi_0(Y_i)=\nabla^2_g\pi_0(Y_i)=0$.

By plugging (\ref{Proof:Perturbation:VectorField:Eq4}), (\ref{Proof:Perturbation:VectorField:Eq5}), (\ref{Proof:Perturbation:VectorField:Eq6}), (\ref{Proof:Perturbation:VectorField:Eq7}) and (\ref{Proof:Perturbation:VectorField:Eq8}) into (\ref{Proof:Perturbation:VectorField:Eq3}), together with (\ref{Proof:Perturbation:VectorField:Eq2}), (\ref{Proof:Perturbation:VectorField:Eq1}) becomes
\begin{align}
& \| X_i - Y_i \|_{C^{1,\alpha}(g)} \nonumber \\
\leq&\, C^{(2)}_p(g) \left(  \| \nabla^2_g (X_i - Y_i)  \|_{L^2(g)} +\| X_i - Y_i \|_{H^1(g)} + \beta_k(\epsilon)\right)^{2/p} \nonumber\\
&\, \times \left( \| X_i \|_{\infty} + \| Y_i \|_{\infty} + \|\nabla_g X_i \|_{\infty} +  \|\nabla_g Y_i \|_{\infty} +\|\nabla^2_g X_i \|_{\infty} +  \|\nabla^2_g Y_i \|_{\infty}+\beta_k(\epsilon)\right)^{(p-2)/p}\nonumber\,.
\end{align}
Note that by Proposition \ref{prop:orthonoram_approx}, for all $1 \leq i <k_0$  there exist constants $\eta_{g,j}(\epsilon)$ such that 
\be
\| X_i - Y_i \|_{H^1(g)} + \| \nabla^2_g (X_i - Y_i)  \|_{L^2(g)} \leq \eta_{g,j}(\epsilon).
\ee
Further, since the metric $h$ is $\epsilon$ close to $g$ in $c^{2,\alpha}$, the norms we consider for either the metric $g$ or the metric $h$ are comparable. Also note that $\|\nabla^2_g X_i \|_{L^{\infty}(g)} = \lambda_i(g) \|X_i \|_{L^{\infty}(g)}\leq C\|X_i \|_{L^{\infty}(g)}$ and $\|\nabla^2_h Y_i \|_{L^{\infty}(h)} = \lambda_i(h) \|Y_i \|_{L^{\infty}(h)}\leq C\|Y_i \|_{L^{\infty}(h)}$. Therefore, to complete the proof, it suffices to give uniform bounds for $ \| X_i \|_{L^{\infty}(g)}$,  $\| Y_i\|_{L^{\infty}(h)}$, $\|\nabla_g X_i \|_{L^{\infty}(g)}$, and $\|\nabla_h Y_i\|_{L^{\infty}(h)}$ for $i\leq k_0$.

By the assumption on the injectivity and volume, there exists an universal number $N_0$ so that $(M,g)$ and $(M,h)$ are covered by $N_0$-balls. Indeed, due to the lower bound of the injectivity, by Croke's lemma \cite{Croke:1980}, the balls of radius $\epsilon$ is bounded from below; on the other hand, since the volume is bounded by $V$, we have a bounded number of balls. Thus, by applying Proposition \ref{conj:est:sup}, there exists some constant $C=C(i_0, V, \alpha, n, Q,\kappa, k_0)$ so that 
\be
\| X_i \|_{L^{\infty}(g)}, \| Y_i \|_{L^{\infty}(h)}, \|\nabla_g X_i \|_{L^{\infty}(g)}, \mbox{ and } \|\nabla_h Y_i \|_{L^{\infty}(h)} \leq C
\ee
for $i\leq k_0$.
\end{proof}

\section{Examples}
\label{examples}
In this section, we illustrate how VDM is carried out numerically. A detailed discussion of the algorithm and its application to different problems could be found in \cite{Singer_Wu:2012,Zhao_Singer:2014,Chung_Kempton:2013,Alexeev_Bandeira_Fickus_Mixon:2013,Marchesini_Tu_Wu:2014}, and its robustness analysis could be found in \cite{ElKaroui_Wu:2015b}

Suppose we sampled $n$ points $\mathcal{X}:=\{x_i\}_{i=1}^m$ uniformly from a $n$-dim smooth manifold $M$ without boundary embedded in $\mathbb{R}^{p}$ via $\iota$; that is, $\{x_i\}_{i=1}^m\subset \iota(M)\subset \RR^p$. Assume that $M$ is endowed with a Riemannian metric $g$ induced from the ambient Euclidean space via $\iota$. We construct the graph connection Laplacian (GCL) $C\in \RR^{mn\times mn}$ in the following way. 

\begin{itemize}
\item Step 1: build up an undirected graph $G=(V,E)$, where $V= \mathcal{X}$ and $E$ is the edges $E=\{(x_i,x_j)|\, i\neq j\}$. Note that there are different ways to design $E$ from a given point cloud. To demonstrate the idea, here we simply design the edge so that the undirected graph is complete.

\item Step 2: establish the {\it affinity function}, $w:E\to \RR^+$, by setting $w(x_i,x_j)=K(\|x_i-x_j\|_{\RR^p}/\sqrt{\epsilon})$, where $\epsilon>0$ is the ``bandwidth'' and $K$ is a suitable kernel function chosen by the user. In all experiments in this section, we used the kernel function $K(u)=e^{-5u^2}\chi_{[0,1]}$ for the definition of the weight function $w$. 

\item Step 3: for $x_i$, choose a basis $\{u_{i,1},\ldots,u_{i,n}\}\subset \RR^p$, of the embedded tangent space, $\iota_*T_{x_i}M$, which is a $d$-dim affine subspace in $\RR^p$. Denote $B_i=[u_{i,1},\ldots, u_{i,n}]\in \RR^{p\times n}$. In general, we do not have the information of $\iota_*T_{x_i}M$, and need to estimate $B_i$ via the local principal component analysis based on the fact that locally the manifold could be well approximated by the embedded tangent space \cite{Singer_Wu:2012}.

\item Step 3: establish the {\it connection function}, $g:E\to O(d)$, where $g(x_i,x_j)=O_{ij}$ and $O_{ij}=\arg\min_{O\in SO(n)}\|B_i^TB_j-O\|$. Here $O_{ij}$ is an approximation of the parallel transport from $x_j$ to $x_i$ associated with the induced metric $g$ \cite{Singer_Wu:2012}. Note that in general, due to the curvature, $B_i^TB_j$ is not an orthogonal matrix, so we need an optimization step to obtain the parallel transport.

\item Step 4: establish a $n\times n$ block matrix $S$, with $d\times d$ block entries, so that the $(i,j)$-th block is $S[i,j]=w(x_i,x_j) g(x_i,x_j)$; establish a $n\times n$ diagonal block matrix $D$, with $d\times d$ block entries, so that the $(i,j)$-th block is $D[i,j]=\sum_{l=1}^m w(x_i,x_l) I_{n\times n}\delta_{ij}$

\item Step 5: establish the GCL $C=I_{mn\times mn}-D^{-1}S$. 
\end{itemize}

With the GCL $C$, we could evaluate its eigenvalue $\lambda_l$ and eigenvectors $u_l\in\RR^{mn}$, and order the eigenvalues in the increasing order: $\lambda_1\leq \lambda_2\leq\ldots$. Denote $u_i[l]\in\RR^n$ as $u_i[l]=(u_i((l-1)d+1),\ldots,u_i(ln))^T$. The VDM with the diffusion time $t>0$ for the sampled point cloud $\{x_i\}_{i=1}^n$ is defined as
\begin{equation}
V_t:\, x_i\mapsto (e^{-(\lambda_k+\lambda_l)t}\langle u_k[i],u_l[j]\rangle)_{l,k=1}^{mn}\subset \RR^{(mn)^2}.
\end{equation}
Here, we emphasize that $u_i[l]$ is actually the {\em coordinate} of the $i$-th eigenvector field at $x_l$ {\em associated with the basis chosen for $T_{x_l}M$} \cite{Singer_Wu:2016}. It has been shown that asymptotically, the GCL will converge not only pointwisely but also spectrally to the connection Laplacian of $M$. Thus, we could numerically implement VDM. Note that in general we could not visualize a geometric object with dimension higher than $3$, so in the following numerical implementation, we illustrate the truncated VDM by choosing suitable subset of embedding coordinates.

The first example is taking the manifold $M$ to be diffeomorphic to $S^1$ embedded in $\RR^p$. Here, $M$ is generated by the X-ray transform of a $2$-dim image compactly supported on $\RR^2$. The dataset is generated in the following way. Take the image of interest to be a compactly supported function $f$ so that $\text{supp}(f)\subset B_1$, where $B_1$ is the ball with center $0$ and radius $1$. Sample uniformly $m$ points on $S^1$, denoted as $\{\theta_i\}_{i=1}^m$. The dataset is generated by discretizing the X-ray transform by $x_i:=[R_\psi(\theta_i,z_j)]_{i=1}^p$, where $i=1,\ldots, n$, $R_\psi(\theta_i,z_j)=\int \psi(y\theta_i+z_j)d y$, $z_j=-1+2j/p$ and $p$ is chosen big enough so that the topology of $R_\psi(S^1)$ is preserved after discretization. Note that while we sample uniformly from $S^1$, after the X-ray transform $R_\psi$, which is a diffeomorphism from $S^1$ to a $1$-dim manifold $M$, the dataset $\{x_i\}_{i=1}^m$ is non-uniformly sampled from $M$. We refer the reader with interest to \cite{Singer_Wu:2013a} for more details about this 2-dim random tomography problem. 
Take $m=2000$. Note that since the tangent bundle of $M$, which is diffeomorphic to $S^1,$ is trivial, the connection Laplacian is reduced to the Laplace-Beltrami operator. Thus, the first eigenvector field is the constant function, the second and third eigenvector field are the sine and cosine functions, and so on. We thus consider the following truncated VDM
\begin{equation}
V^{(2)}_1: x_i\mapsto (e^{-(\lambda_1+\lambda_2)}\langle u_1[i],u_2[i]\rangle, e^{-(\lambda_1+\lambda_3)}\langle u_1[i],u_3[i]\rangle)^T\in \RR^2,
\end{equation}
which embeds $M$ into $\RR^2$. Figure \ref{fig1} shows this truncated VDM. Note VDM generates a non-canonical circle embedded in $\RR^2$, due to the non-uniform distribution on $M$.

\begin{figure}[h]
\includegraphics[width=0.7\textwidth]{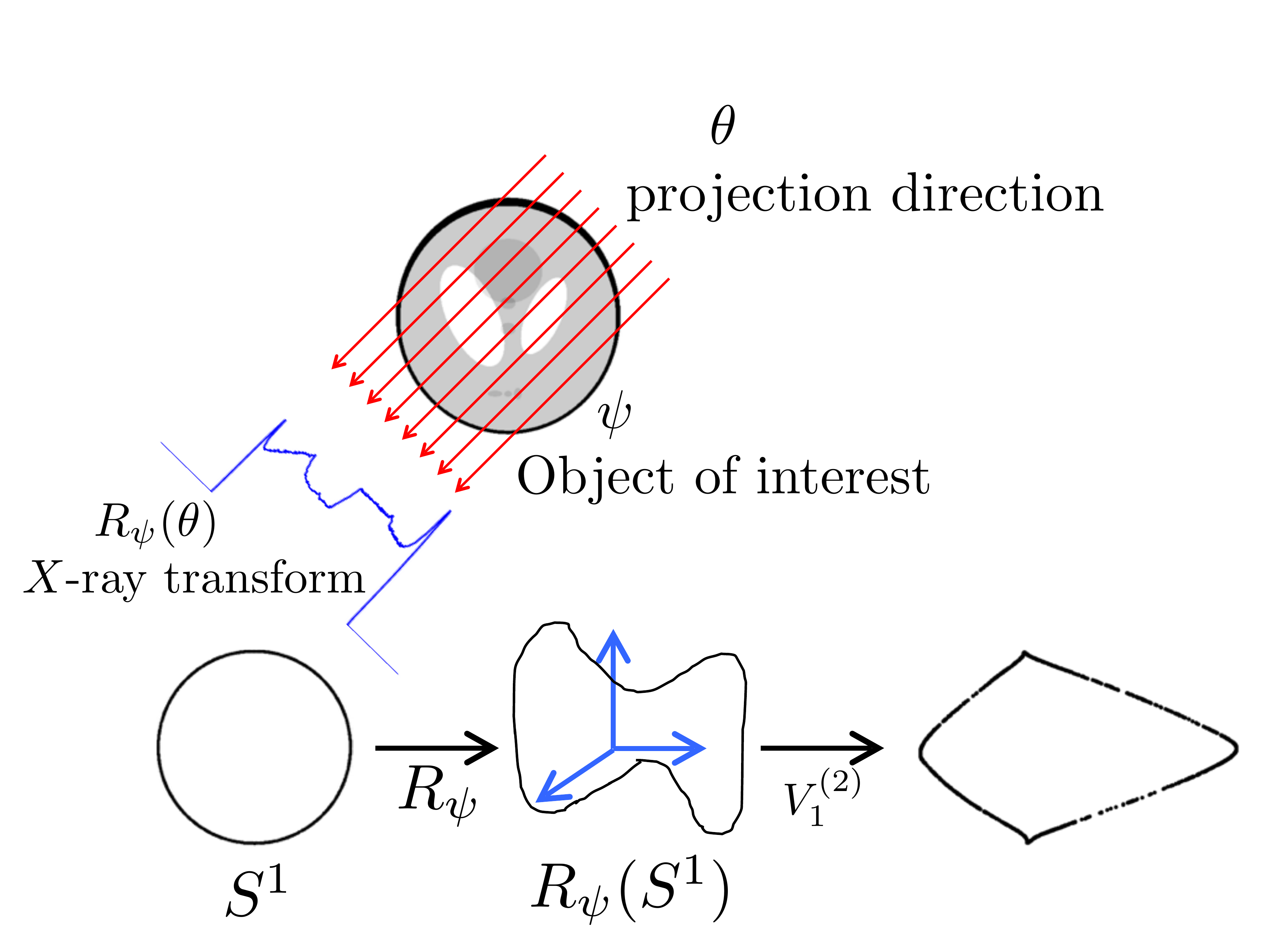}
\caption{A nontrivial $S^1$ generated by the X-ray transform and the VDM result.}
\label{fig1}
\end{figure}

The second example shows that we might not be able to visualize $S^2$ by VDM. Take the manifold to be the canonical $S^2$ embedded in $\RR^3$. Take $m=2000$. Note that although the tangent bundle of $S^2$ is nontrivial, its well-known that the first few eigenvector fields could be easily evaluated -- the eigenvector fields associated with the smallest eigenvalue, $\lambda_1$, of the connection Laplacian, denoted as $X_1$, $X_2$ and $X_3$, are nothing but the project of the constant vector fields $(1,0,0)$, $(0,1,0)$ and $(0,0,1)$ on $\RR^3$ onto the embedded tangent space. Also, since $S^2$ is a Riemannian surface with the complex structure, the ``complex conjugate'' of $X_1$ (respectively $X_2$ and $X_3$), denoted as $X_4$ (respectively $X_5$ and $X_6$), is also an eigenvector field associated with $\lambda_1$. We carry out the following calculation to explicitly evaluate these eigenvector fields. Denote
\begin{equation}
R=\begin{bmatrix} | & | & |\\R^1 & R^2 & R^3\\|& | & | \end{bmatrix}=\begin{bmatrix} R^1_1 & R^2_1 & R^3_1\\R^1_2 & R^2_2 & R^3_2\\R^1_3 & R^2_3 & R^3_3\end{bmatrix}\in SO(3).
\end{equation}
Since $SO(3)$ is the frame bundle of $S^2$, if we view $R^3\in S^2$ as a sampled point from $S^2$, $R^1$ and $R^2$ form a basis of the embedded tangent plane of $T_{R^3}S^2$, which is an affine subspace of $\RR^3$. By a direct calculation, we know that $\iota_*X_1$, the embedded vector field $X_1$ in $\RR^3$, becomes
\begin{equation}
\iota_*X_1(R^3)=\begin{bmatrix} | & | \\R^1 & R^2 \\|& | \end{bmatrix}\begin{bmatrix} | & | \\R^1 & R^2 \\|& | \end{bmatrix}^T\begin{bmatrix} 1 \\0\\0 \end{bmatrix}=\begin{bmatrix} | & | \\R^1 & R^2 \\|& | \end{bmatrix}\begin{bmatrix} R^1_1 \\R^2_1\end{bmatrix}.
\end{equation}
Similarly, for $k=2,3$, we know that 
\begin{equation}
\iota_*X_k(R^3)=\begin{bmatrix} | & | \\R^1 & R^2 \\|& | \end{bmatrix}\begin{bmatrix} R^1_k \\R^2_k\end{bmatrix}.
\end{equation}
By the cross product associated with the almost complex structure on $S^2$, $X_4,X_5$ and $X_6$ could be evaluated directly. For $k=1,2,3$, we have
\begin{equation}
\iota_*X_{k+3}(R^3)=R^3\times \iota_*X_k(R^3)=\begin{bmatrix} | & | \\R^2 & -R^1 \\|& | \end{bmatrix}\begin{bmatrix} R^1_k \\R^2_k\end{bmatrix}=R^1_kR^2-R^2_kR^1.
\end{equation}
Thus, 
\begin{align}
\langle X_k(R^3),X_l(R^3)\rangle &= \langle X_{k+3}(R^3),X_{l+3}(R^3)\rangle=R^1_kR^1_l+R^2_kR^2_l, \nonumber\\
\langle X_k(R^3),X_{l+3}(R^3)\rangle &= -R^1_kR^2_l+R^2_kR^1_l,
\end{align}
for $k,l=1,\ldots,3$.
By a direct calculation, we have $\langle X_k(R^3),X_l(R^3)\rangle=\langle X_k(-R^3),X_l(-R^3)\rangle$ and $\langle X_k(R^3),X_{l+3}(R^3)\rangle=-\langle X_k(-R^3),X_{l+3}(-R^3)\rangle$. Further, when $k\neq l$, the map $R^3\mapsto \langle X_k(R^3),X_{l+3}(R^3)\rangle$ maps $\pm e_k$ and $\pm e_l$ to $0$. Thus, the truncated VDM depending only on a randomly chosen three $\langle X_k(R^3),X_l(R^3)\rangle$, $k,l=1,\ldots,6$, is not an embedding. Note that this is different from the diffusion maps -- we could simply visualize $S^2$ via the truncated diffusion maps by taking the top three eigenfunctions into account. Also, we know that the range of the map
\begin{equation}
V^{(3)}_1: x_i\mapsto (e^{-2\lambda_k}\langle u_k[i],u_k[i]\rangle)_{k=1}^3\in \RR^3
\end{equation}
is diffeomorphic to $\RR P^2$.

The theoretical analysis and the numerical results open the following questions. Although the theorem shows that each compact smooth manifold without boundary could be embedded to a finite dimensional Euclidean space by the truncated VDM taking finite eigenvector fields into account, the number of necessary eigenvector fields is not clear. The same problem happens when we discuss the spectral embedding by the truncated diffusion map. 
Even worse, while the purpose of many nonlinear embedding algorithms is to ``reduce the dimension'', there is no guarantee that we could achieve it if we could not know the necessary number of the eigenvector fields. Recall that by ``reduce the dimension'', we mean that the ``dimension'' of the sampled point cloud could be well represented by another point cloud embedded in a lower dimensional space. Without the knowledge of the necessary number of eigenvector fields, we could not rule out the possibility that the number of eigenvector fields we need for the truncated VDM (or the number of eigenfunctions in the truncated DM) might be more than the ambient space dimension of the original dataset. We thus need to find a way to estimate the necessary number of the eigenvector fields (respectively eigenfunctions) for the truncated VDM (respectively DM). Ideally we would like to have a bound of this number based on the basic geometric/topological quantities. This problem is prevalent in the field actually -- for a given nonlinear embedding algorithm, how to evaluate if the dimension reduction mission for a dataset could be achieved? To the best of our knowledge, this is also an open problem for most nonlinear embedding algorithm.


\bibliography{noisyManifold}
\bibliographystyle{plain}

\end{document}